\DeclareMathOperator*{\argmin}{argmin}
\newcommand{\Id}{\ensuremath{\operatorname{Id}}}
\newcommand{\tb}{\widetilde{\beta}}
\newcommand{\ds}{\displaystyle}
\newcommand{\nexto}{\kern -0.54em}
\newcommand{\dR}{\mathbb{R}}
\newcommand{\dZ}{{\cal Z \kern -0.7em Z}}
\newcommand{\dC}{{\rm\hbox{C \kern-0.8em\raise0.2ex\hbox{\vrule
height5.4pt width0.7pt}}}}
\newcommand{\dQ}{{\rm\hbox{Q \kern-0.85em\raise0.25ex\hbox{\vrule
height5.4pt width0.7pt}}}}
\newcommand{\proofbox}{\hspace{\fill}{$\Box$}}
\newtheorem{lemma}{Lemma}
\newtheorem{theorem}{Theorem}
\newtheorem{corollary}{Corollary}
\newtheorem{remark}{Remark}
\newtheorem{definition}{Definition}
\theoremstyle{definition}
\newtheorem{algorithm}{Algorithm}
\begin{document}

\title{\vspace{-10mm}\bf Douglas--Rachford algorithm for control-constrained minimum-energy control problems}

\author{
Regina S. Burachik\thanks{Mathematics, UniSA STEM, University of South Australia, Mawson Lakes, S.A. 5095, Australia. Emails:~regina.burachik@unisa.edu.au, bethany.caldwell@mymail.unisa.edu.au, yalcin.kaya@unisa.edu.au.}
\qquad
Bethany I. Caldwell\footnotemark[1]
\qquad
C. Yal{\c c}{\i}n Kaya\footnotemark[1]
}

\maketitle

\begin{abstract} {\noindent\sf  
Splitting and projection-type algorithms have been applied to many optimization problems due to their simplicity and efficiency, but the application of these algorithms to optimal control is less common. In this paper we utilize the Douglas--Rachford (DR) algorithm to solve control-constrained minimum-energy optimal control problems. Instead of the traditional approach where one discretizes the problem and solves it using large-scale finite-dimensional numerical optimization techniques we split the problem in two subproblems and use the DR algorithm to find an optimal point in the intersection of the solution sets of these two subproblems hence giving a solution to the original problem. We derive general expressions for the projections and propose a numerical approach.  We obtain analytic closed-form expressions for the projectors of pure, under-, critically- and over-damped harmonic oscillators. We illustrate the working of our approach to solving not only these example problems but also a challenging machine tool manipulator problem.  Through numerical case studies, we explore and propose desirable ranges of values of an algorithmic parameter which yield smaller number of iterations.}
\end{abstract}
\begin{verse}
{\em Key words}\/: {\sf Optimal control, Harmonic oscillator, Douglas--Rachford algorithm, Control constraints, Numerical methods.}
\end{verse}

\noindent{\bf Mathematical Subject Classification: 49M37; 49N10; 65K10}

\pagestyle{myheadings}
\markboth{}{\sf\scriptsize Douglas--Rachford Algorithm for Minimum-energy Control by R.~S.~Burachik, B.~I.~Caldwell, and C.~Y.~Kaya}

\section{Introduction}
Linear-quadratic (LQ) control problems are an important class of optimal control problems with a quadratic cost (or objective) functional to minimize subject to linear differential equation constraints describing the dynamics---see for theory and applications \cite{BurKayMaj2014,MTM,AmmKen1998,MauObe2003,Mou2011,BusMau2000,KugPes1990}. In this paper we will study applications of projection methods to solving the minimum-energy control of pure, under-, critically- and over-damped harmonic oscillators, as well as a machine tool manipulator, which are all examples of LQ control problems.  In fact, in all these applications we impose constraints on the control variable which makes the problems computationally challenging, justifying a novel implementation of projection methods.  For the quadratic objective functional, we consider the square norm of the control variable throughout the paper.  These problems are what we refer to as {\em minimum-energy control problems}\footnote{It must be stressed that we are not necessarily minimizing the ``true'' energy of for example a harmonic oscillator per se from a physics point of view. Rather, we are concerned with minimizing the ``energy of the control or signal'' or the ``energy of the force.''  Elaboration of this subtle difference in the terminology can also be found in \cite[Section~6.17]{AthaFalb1966}, \cite[Section~5.5]{Kirk1970}, \cite[Section~2.9]{Klamka2019} and \cite[page~118]{Sethi2019}.}.

Projection methods are an emerging field of research in mathematical optimization with successful applications to a wide range of problems, including road design \cite{BauKoch}, protein reconstruction \cite{AraBorTam2014}, sphere packing \cite{GraEls2008}, sudoku \cite{Bau2008}, graph colouring problems \cite{AraCampEls2020} and, radiation therapy treatment planning \cite{AltCenPow1988}.  These methods have chiefly been applied to discrete-time optimal control problems \cite{OdoStaBoy2013}, but there has been little or no research into applications to continuous-time optimal control problems, except recently in~\cite{BausBuraKaya2019} by Bauschke, Burachik and Kaya.  In~\cite{BausBuraKaya2019} various projection methods are applied to solve the energy minimizing double integrator problem, where the control variable is constrained, with promising results. The numerical experiments show that projection methods outperform a method employing direct discretization even in solving this relatively simple optimal control problem.



The aforementioned direct discretization approach is to first discretize the problem, typically using a Runge--Kutta method such as the Euler or trapezoidal methods, and then apply finite-dimensional optimization software (for example, AMPL~\cite{AMPL} paired with Ipopt~\cite{WacBie2006}) in order to solve the resulting large scale discrete-time optimal control problem. We aim to show the merits of the Douglas--Rachford (DR) algorithm, a popular projection method extended to solving optimization problems.  In particular, we aim to solve LQ control problems, which are much more general than the double integrator problem, and compare the DR algorithm with direct discretization.

The approach in this paper exploits the structure of LQ control problems to obtain advantages, just as the approach in \cite{BausBuraKaya2019} does the same with the simple double integrator problem. In our approach we split the constraints of the original LQ problem into two sets: one contains the ODE constraints involving the state variables, and the other contains box constraints on the control variables.  These sets are subsets of a Hilbert space, the first one of these subsets constituting a closed affine set (see Corollary \ref{cor: A closed}) and the second one a closed and convex set.  We define two simpler optimal control subproblems for computing projections, one subject to the affine set and the other to the box.  Solutions to these subproblems yield the projectors onto each of the two sets. 

The main contributions of this paper are as follows.
\begin{itemize}
    \item We derive a general expression for the projectors onto the affine and box sets of the minimum-energy control problem. (See Theorems~\ref{thm:projA_gen} and \ref{thm:projB}.)
    \item We obtain closed-form analytical expressions for the projectors of the special problems whose dynamics involve pure as well as under-, critically- and over-damped harmonic oscillators.  (See Corollaries~\ref{cor:projA_PHO}--\ref{cor:projA_PDHO3}, resp., for projections onto the affine sets of each case, and Corollary~\ref{cor:projB} for projection onto the box.)
    \item The projector expression in Theorem~\ref{thm:projA_gen} necessitates the knowledge of the state transition matrix as well as the Jacobian of the near-miss function of the shooting method.  For the case of general minimum-energy control, we present a computational algorithm (namely Algorithm~\ref{alg:projA}) for constructing the state transition matrix and the Jacobian and thus finding a projector onto the affine set which in turn can be used in general projection algorithms.
    \item We illustrate the working of Algorithm~\ref{alg:projA} and Theorem~\ref{thm:projA_gen} in the DR algorithm. The DR algorithm is applied to solving not only the above-mentioned example problems but also a challenging machine tool manipulator example problem.  These problems should furnish a class of test-bed examples for future studies.
    \item Selection of an algorithmic parameter plays an important role in the performance of the DR algorithm.  Through case studies, by means of the test-bed examples listed above, we explore and propose the ranges of values of this parameter with which the algorithms seem to converge in a smaller number of iterations.
\end{itemize}

We note that Corollary~\ref{cor:projA_PDI}, which provides an analytical projector expression in closed-form for the double integrator problem, was originally derived in~\cite[Proposition~1]{BausBuraKaya2019}.  Nevertheless, in this paper, we show that this expression can also be obtained using direct substitutions of the state transition matrix and the Jacobian into the general expression in Theorem~\ref{thm:projA_gen}.

For all the above-mentioned example problems we perform numerical experiments and compare the performance of the DR algorithm by also using the optimization modelling software AMPL paired with the interior point optimization software Ipopt.  In these experiments we observe that not only is the DR algorithm more efficient, i.e., it can find a solution in a much smaller amount of time, than the AMPL--Ipopt suite, but also that Ipopt sometimes fails in finding a solution at all.  We also compare the errors in the control and state variables separately.  These cases for different problems are tabulated altogether for an easier appreciation of the conclusions we set out.

The paper is organized as follows. Section~\ref{sec:oc} contains necessary background and preliminaries on minimum-energy control problems and optimality. In Section~\ref{sec:proj} we derive the projectors for a general minimum-energy control problem as well as some specific cases. Section \ref{sec:projAlg} presents the DR algorithm that we apply in Section \ref{sec:num}. Section \ref{sec:num} provides a numerical approach for obtaining the projector onto the affine set when it is not possible or convenient (due to length) to use an analytical expression. This section also contains numerical experiments comparing the performance of the DR algorithm with a direct discretization approach, as well as an exploration of (in some sense) best values of the parameter of the DR algorithm.  Section \ref{sec:con} contains concluding remarks and open problems.  In the appendix we provide the detailed proofs of the projectors onto the affine set for the harmonic oscillator problems.

\section{Minimum-energy Control Problem}
\label{sec:oc}
In this section we introduce the theoretical framework as well as the optimal control problem we study. We derive the necessary conditions of optimality for the problem via Pontryagin's maximum principle, which will be instrumental in the derivation of the projectors. We also split the constraints of the problem into two sets which facilitate the projection method we will study.

Before introducing the optimal control problem we will give some standard definitions. Unless otherwise stated all vectors are column vectors. Let ${\cal L}^2([t_0,t_f];\mathbb{R}^q)$ be the Hilbert space of Lebesgue measurable functions $z:[t_0,t_f]\rightarrow\mathbb{R}^q$, with finite ${\cal L}^2$ norm, namely,
\[{\cal L}^2([t_0,t_f];\mathbb{R}^q):=\left\{z:[t_0,t_f]\rightarrow\mathbb{R}^q\,\:|\,\:\|z\|_{{\cal L}^2} := \left(\int_{t_0}^{t_f}\|z(t)\|^2\,dt\right)^{1/2}<\infty\right\}\]
where $\|\cdot\|$ is the $\ell_2$ norm in $\dR^q$. Furthermore, ${\cal W}^{1,2}([t_0,t_f];\mathbb{R}^q)$ is the Sobolev space of absolutely continuous functions, namely
\[{\cal W}^{1,2}([t_0,t_f];\mathbb{R}^q):=\left\{z\in {\cal L}^2([t_0,t_f];\mathbb{R}^q)\,|\,\dot{z}:=dz/dt\in {\cal L}^2([t_0,t_f];\mathbb{R}^q)\right\},\]
endowed with the norm
\[\|z\|_{{\cal W}^{1,2}}:=(\|z\|_{{\cal L}^2}^2+\|\dot{z}\|_{{\cal L}^2}^2)^{1/2}.\]
With these definitions we define a general minimum-energy optimal control problem, which is an LQ control problem, as follows.
\[
\mbox{(P) }\left\{\begin{array}{rl}\label{eqn:OCP1}
\ds\min_{u} & \ \ \ds \frac{1}{2}\int_{t_0}^{t_f} \|u(t)\|^2 \,dt  \\[5mm] 
\mbox{subject to} & \ \ \dot{x}(t) = A(t)x(t)+B(t)u(t)\,,\ \ x(t_0) = x_0\,,\ \ x(t_f) = x_f\,, \\[2mm]
& \ \ u(t)\in U\subseteq\mathbb{R}^m\,,\ \ x(t)\in\mathbb{R}^n, \ \ \forall t\in[t_0,t_f].
\end{array}\right.
\]
The {\em state variable} $x\in {\cal W}^{1,2}([t_0,t_f];\mathbb{R}^n)$, with $x(t) := (x_1(t),\ldots,x_n(t))\in\dR^n$, and the {\em control variable} $u\in {\cal L}^2([t_0,t_f];\mathbb{R}^m)$, with $u(t) := (u_1(t),\ldots,u_m(t))\in\dR^m$. The set $U$ is a fixed closed subset of $\dR^m$. 
The time varying matrices $A:[t_0,t_f]\rightarrow\mathbb{R}^{n\times n}$ and $B:[t_0,t_f]\to\mathbb{R}^{n\times m}$ are continuous. The initial and terminal states are given as $x_0$ and $x_f$ respectively. Note that, for every $t\in[t_0,t_f]$, we can write
\[
B(t)u(t) = \sum_{i=1}^m b_i(t)\,u_i(t)\,,
\]
where $b_i(t)\in \mathbb{R}^n,\,i=1,\ldots,m$, is the $i$th column of $B(t)$. We note that, when (P) is feasible, it has a unique solution due to the strong convexity of the objective function.

We assume that (i) the dynamical system in~(P) is controllable, i.e., by choosing a suitable unconstrained control variable $u(\cdot)$, one can drive any initial state $x_0$ to any other terminal state $x_f$, (ii) Problem (P) is feasible, i.e., the constraint set of Problem~(P) is nonempty and (iii) Problem~(P) is normal, i.e., Pontryagin's maximum principle does not become degenerate and fail to provide information on optimality.

\subsection{Optimality conditions}\label{subsec:opCond}
In this section, we use Pontryagin's maximum principle to derive the necessary conditions of optimality for Problem (P).

Various forms of Pontryagin's maximum principle can be found, along with their proofs, in a number of reference books -- see, for example, \cite[Theorem 1]{pontry1962}, \cite[Chapter 7]{Hestenes66}, \cite[Theorem 6.4.1]{Vinter2000}, \cite[Theorem 6.37]{mord2006}, and \cite[Theorem 22.2]{clarke2013}. We will state Pontryagin's maximum principle using notation and settings from these references. We start by defining the {\em Hamiltonian function} $H:\mathbb{R}^n\times\mathbb{R}^m\times\mathbb{R}^n\times\mathbb{R}\times[t_0,t_f] \to \mathbb{R}$ for Problem~(P) as
\[
H(x(t),u(t),\lambda(t),\lambda_0,t):=\frac{\lambda_0}{2}\|u(t)\|^2 +\lambda(t)^T\left(A(t)x(t)+\sum_{i=1}^m b_i(t)\,u_i(t)\right),
\]
where the {\em adjoint variable} vector $\lambda:[t_0,t_f]\rightarrow\mathbb{R}^n$, with $\lambda(t):=(\lambda_1(t),\dots,\lambda_n(t))\in\mathbb{R}^n$ and $\lambda_0$ is a real constant. For brevity, we use the following short-hand notation,
\[
H[t] := H(x(t),u(t),\lambda(t),\lambda_0,t)\,.
\]
The adjoint variable vector is assumed to satisfy the condition (see e.g. \cite{Hestenes66})
\begin{equation}\label{eqn:adj}
    \dot{\lambda}(t) := -H_x[t] = -A(t)^T\lambda(t)
\end{equation}
for every $t\in[t_0,t_f]$, where $H_x:=\partial H/\partial x$. Suppose that the control set $U$ is a box in $\mathbb{R}^m$, i.e., $U = [-a_1,a_1]\times\cdots\times[-a_m,a_m]$, and that the pair $(x,u)\in {\cal W}^{1,2}([t_0,t_f];\mathbb{R}^n)\times {\cal L}^2([t_0,t_f];\mathbb{R}^m)$ is optimal for Problem~(P). Then Pontryagin's maximum principle asserts that there exist a real number $\lambda_0 \ge 0$ and a continuous adjoint variable vector $\lambda\in {\cal W}^{1,2}([t_0,t_f];\mathbb{R}^n)$ as defined in Equation~\eqref{eqn:adj}, such that $\lambda(t)\neq\mathbf{0}$ for all $t\in[t_0,t_f]$, and that, for all $t\in[t_0,t_f]$,
\begin{align}
    u_i(t) &= \argmin_{|v_i|\leq a_i} H(x(t),u_1(t),\ldots,v_i,\dots,u_m(t),\lambda(t),\lambda_0,t) \nonumber \\
    &= \argmin_{|v_i|\leq a_i} \frac{\lambda_0}{2}\,(u_1(t)^2 + \ldots + v_i^2 + \ldots + u_m(t)^2) \nonumber \\
    &\hspace*{20mm} +\lambda^T(t)\,(A(t)x(t)+ b_1(t)\,u_1(t) + \ldots + b_i(t)\,v_i + \ldots +b_m(t)\,u_m(t)) \nonumber \\
    &= \argmin_{|v_i|\leq a_i} \frac{\lambda_0}{2}\, v_i^2 +\lambda^T(t)\,b_i(t)\,v_i\,,
    \label{eqn:maxPrin}
\end{align}
for $i = 1,\ldots,m$. We ignored all terms that do not depend on $v_i$ to arrive at Equation~\eqref{eqn:maxPrin}. If $a_i=\infty$, $i = 1,\ldots,m$, i.e., if the control vector is unconstrained, then \eqref{eqn:maxPrin} becomes
\[H_{u_i}[t] = 0,\]
\begin{equation}  \label{ui_unconstr}
\lambda_0 u_i(t)+b_i(t)^T\lambda(t) = 0\,,
\end{equation}
$i = 1,\ldots,m$.  We assume that the problem is {\em normal}, i.e., $\lambda_0>0$, so we can take $\lambda_0=1$ without loss of generality.  Then \eqref{ui_unconstr} can be solved for $u_i(t)$ as
\begin{equation}  
u_i(t) = -b_i(t)^T\lambda(t)\,,
\end{equation}
for $i = 1,\ldots,m$; or using the input matrix $B(t)$,
\begin{equation} \label{eqn:u(t)}
u(t) = -B(t)^T\lambda(t)\,.
\end{equation}
With the box constraint on $u(t)$, one gets from~\eqref{eqn:maxPrin}
\begin{eqnarray}\label{eqn:u_gen}
u_i(t) = \left\{\begin{array}{rl}
   a_i, & \mbox{if } b_i^T(t)\lambda(t)\leq-a_i, \\[2mm]
  -b_i^T(t)\lambda(t), & \mbox{if } -a_i\leq b_i^T(t)\lambda(t)\leq a_i, \\[2mm]
  -a_i, & \mbox{if } b_i^T(t)\lambda(t)\geq a_i,
\end{array} \right.
\end{eqnarray}
for all $t\in[t_0,t_f]$, $i=1,\dots,m$.

Recall that the {\em state transition matrix} $\Phi_{A}(t,t_0)$ of $\dot{x}(t) = A(t)\,x(t)$, also referred to as the {\em resolvant matrix}, is the unique matrix such that $x(t) = \Phi_{A}(t,t_0)\,x(t_0)$---also see \cite{Rugh1995} for further details and the properties. Then from
 \cite{BorCol} the solution of the initial value problem $\dot{x}(t) = A(t)x(t)+B(t)u(t)$, $x(t_0) = x_0$, in Problem~(P) can simply be written as
\begin{equation} \label{x_soln}
x(t) = \Phi_A(t,t_0)\,x_0 + \int_{t_0}^t \Phi_A(t,\tau)\,B(\tau)\,u(\tau)\,d\tau\,.
\end{equation}
Similarly, Equation \eqref{eqn:adj} can be solved as $\lambda(t) = \Phi_{(-A^T)}(t,t_0)\lambda_0$, or by using the identity $\Phi_{(-A^T)}(t,t_0) = \Phi_A(t_0,t)^T$~\cite[Property~4.5]{Rugh1995},
\begin{equation}\label{eqn:lambda_gen}
    \lambda(t) = \Phi_A(t_0,t)^T\lambda_0\,.
\end{equation}
When $a_i$ is small enough so that the control constraint is active it is usually impossible to find an analytical solution for (P), hence the need for numerical methods.

\subsection{Constraint splitting}\label{subsec:cons}
We split the constraints into the two sets given below.
\begin{eqnarray} 
{\cal A} &:=& \big\{u\in {\cal L}^2([t_0,t_f];\dR^m)\ |\ \exists x\in {\cal W}^{1,2}([t_0,t_f];\dR^n)\mbox{ which solves } \nonumber \\
&&\ \ \dot{x}(t) = A(t)x(t)+B(t)u(t), \ \ x(t_0) = x_0, \ \ x(t_f) = x_f,\ \forall t\in[t_0,t_f]\big\}\,, \label{A_gen} \\[2mm]
{\cal B} &:=& \big\{u\in {\cal L}^2([t_0,t_f];\dR^m)\ | -a_i\le u_i(t)\le a_i\,,\ \forall t\in[t_0,t_f], i=1,\dots,m\big\}\,.\label{B_gen}
\end{eqnarray}
The set ${\cal A}$ is an {\em affine space} and contains all the feasible control functions from (P) where the control function is unconstrained. The set ${\cal B}$ is a {\em box} which contains all the control functions with components $u_i$ that are constrained by $-a_i$ and $a_i$ (where each $a_i$ is nonnegative). These two sets form the constraint sets for our two subproblems. The reason we split the original problem into two subproblems is because they are much simpler to solve individually so we can derive analytical expressions.

Recall that we assume the dynamical system in~\eqref{A_gen} is {\em controllable}, i.e., that there exists some control $u(\cdot)$ with which the system can be driven from any $x_0$ to any other $x_f$ (see~\cite{Rugh1995}), so that ${\cal A} \neq\emptyset$.  We also assume that ${\cal A} \cap {\cal B} \neq \emptyset$, namely that Problem~(P) is feasible.

\section{Projectors}\label{sec:proj}
In this section we give the projectors onto the sets ${\cal A}$ and ${\cal B}$ for a general problem~(P) followed by the projectors for some specific problems, namely the double integrator, pure harmonic oscillator and (under-, critically- and over-) damped harmonic oscillator.

\subsection{Projectors for general minimum-energy control}
Now that we have the constraint sets, we need to define the subproblems. First, recall that the \emph{projection} $P_C(x)$ of a point $x$ onto $C$ is characterized by $P_C(x)\in C$ and, $\forall y\in C$, $\langle y-P_C(x)|x-P_C(x)\rangle\leq0$ \cite[Theorem~3.16]{BauCombettes}. In our context, the projection onto ${\cal A}$ from a current iterate $u^-$ is the point $u$ which solves the following problem.
\[\mbox{(P1) }\left\{\begin{array}{rl}
\ds\min & \ \ \ds\frac{1}{2}\int_{t_0}^{t_f} \|u(t)-u^-(t)\|^2\,dt = \frac{1}{2} \|u - u^-\|_{{\cal L}^2}^2 \\[5mm] 
\mbox{subject to} & \ \ u\in{\cal A}.
\end{array} \right.\]
The projection onto ${\cal B}$ from a current iterate $u^-$ is the point $u$ which solves the following problem.
\[\mbox{(P2) }\left\{\begin{array}{rl}
\ds\min & \ \ \ds \frac{1}{2} \|u - u^-\|_{{\cal L}^2}^2 \\[5mm] 
\mbox{subject to} & \ \ u\in{\cal B}.
\end{array} \right.\]

First we provide a technical lemma.
\begin{lemma}\label{lem:A_tilde}
Given the $n\times n$ matrix $A(t)$, consider the $n^2\times n^2$ matrix $\widetilde{A}(t)$, defined as
\[
\widetilde{A}(t) := \begin{bmatrix}
A(t) & & \mathbf{0}\\
& \ddots & \\
\mathbf{0} & & A(t) 
\end{bmatrix},
\]
where $\mathbf{0}$ is a zero matrix of appropriate size, and the matrix $A(t)$ appears repeatedly ($n$ times) in diagonal blocks. The state transition matrix of $\widetilde{A}(t)$ is the $n^2\times n^2$ matrix defined as
\begin{equation}\label{eqn:phi_A_tilde}
\Phi_{\widetilde{A}}(t,t_0) := \begin{bmatrix}
\Phi_A(t,t_0) & & \mathbf{0}\\
& \ddots & \\
\mathbf{0} & & \Phi_A(t,t_0)
\end{bmatrix},
\end{equation}
where $\Phi_A(t,t_0)$ (the state transition matrix for $A(t)$), appears repeatedly ($n$ times) in diagonal blocks, where all other elements are zero. 
\end{lemma}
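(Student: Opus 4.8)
The plan is to invoke the defining characterization of the state transition matrix and then verify by direct substitution that the block-diagonal candidate in~\eqref{eqn:phi_A_tilde} satisfies it, concluding by uniqueness. Recall that for a continuous matrix-valued map $M:[t_0,t_f]\to\dR^{k\times k}$, the state transition matrix $\Phi_M(\cdot,t_0)$ is the unique solution of the matrix initial value problem
\begin{equation*}
\frac{\partial}{\partial t}\,\Phi_M(t,t_0) = M(t)\,\Phi_M(t,t_0)\,,\qquad \Phi_M(t_0,t_0) = \Id_k\,,
\end{equation*}
where existence and uniqueness follow from linear ODE theory (Picard--Lindel\"of), since $M$ is continuous and the right-hand side is linear, hence globally Lipschitz in the matrix argument on $[t_0,t_f]$. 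Thus it suffices to show that the right-hand side of~\eqref{eqn:phi_A_tilde}, call it $\Psi(t,t_0)$, solves this problem with $M=\widetilde{A}$ and $k=n^2$.

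First I would check the initial condition: by definition $\Phi_A(t_0,t_0)=\Id_n$, so evaluating the block-diagonal matrix $\Psi$ at $t=t_0$ yields $n$ copies of $\Id_n$ along the diagonal, i.e. $\Psi(t_0,t_0)=\Id_{n^2}$. Next I would differentiate $\Psi$ entrywise in $t$; since differentiation acts blockwise and each diagonal block $\Phi_A(t,t_0)$ satisfies $\partial_t\Phi_A(t,t_0)=A(t)\,\Phi_A(t,t_0)$, we get that $\partial_t\Psi(t,t_0)$ is block diagonal with every diagonal block equal to $A(t)\,\Phi_A(t,t_0)$. On the other hand, the product $\widetilde{A}(t)\,\Psi(t,t_0)$ of two block-diagonal matrices is again block diagonal, with $i$th diagonal block $A(t)\,\Phi_A(t,t_0)$; hence $\partial_t\Psi(t,t_0)=\widetilde{A}(t)\,\Psi(t,t_0)$. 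By the uniqueness quoted above, $\Psi(t,t_0)=\Phi_{\widetilde{A}}(t,t_0)$, which is the assertion.

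There is essentially no obstacle here; the only point requiring a moment's care is that multiplication of block-diagonal matrices preserves the block-diagonal structure and multiplies the corresponding diagonal blocks, which is immediate from the definition of matrix multiplication applied to the block partition. As an alternative route one could argue via the Peano--Baker (Neumann) series $\Phi_M(t,t_0)=\Id+\int_{t_0}^{t}M(s_1)\,ds_1+\int_{t_0}^{t}M(s_1)\int_{t_0}^{s_1}M(s_2)\,ds_2\,ds_1+\cdots$: each term is block diagonal with $i$th diagonal block exactly the corresponding term of the Peano--Baker series for $\Phi_A(t,t_0)$, so summing gives the structure~\eqref{eqn:phi_A_tilde}. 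Either argument produces a short, self-contained proof. \proofbox
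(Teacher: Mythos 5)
Your proof is correct, but it goes through a different (equivalent) characterization of the state transition matrix than the paper's own argument. You verify that the block-diagonal candidate $\Psi(t,t_0)$ solves the matrix initial value problem $\partial_t\Psi(t,t_0)=\widetilde{A}(t)\,\Psi(t,t_0)$, $\Psi(t_0,t_0)=\Id$, and conclude by uniqueness (Picard--Lindel\"of), offering the Peano--Baker series as an alternative. The paper instead works directly with its stated definition of $\Phi_A$ as the solution map, i.e.\ the unique matrix with $x(t)=\Phi_A(t,t_0)\,x(t_0)$ (the definition preceding \eqref{x_soln}): it takes $n$ copies of the vector system $\dot{y}_i(t)=A(t)\,y_i(t)$ with arbitrary initial data $y_{i,0}$, notes $y_i(t)=\Phi_A(t,t_0)\,y_{i,0}$, stacks $\widetilde{y}:=(y_1,\ldots,y_n)$ so that $\dot{\widetilde{y}}(t)=\widetilde{A}(t)\,\widetilde{y}(t)$, and reads off that the map carrying $\widetilde{y}(t_0)$ to $\widetilde{y}(t)$ is precisely the block-diagonal matrix in \eqref{eqn:phi_A_tilde}. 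The two routes are equally short and both complete: yours needs the matrix-ODE characterization of the transition matrix (which the paper never states explicitly, though it is standard and you justify it), whereas the paper's stacking argument follows verbatim from the definition it has already introduced; your observation that block-diagonal structure is preserved under differentiation and multiplication plays the same role there as the observation that the stacked system decouples into its $n$ blocks.
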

\begin{proof}
Suppose that $\Phi_{A}(t,t_0)$ is the state transition matrix of $\dot{y_i}(t) = A(t)\,y_i(t)$, $i = 1,\ldots,n$, where $y_i(t)\in\dR^n$.  Suppose that $y_i(t_0) = y_{i,0}$, $i = 1,\ldots,n$, are the initial conditions. By the definition preceding~\eqref{x_soln}, $y_i(t) = \Phi_{A}(t,t_0)\,y_{i,0}$ for $i = 1,\ldots,n$ is the unique solution. Then with $\widetilde{y}(t) := (y_1(t),\ldots,y_n(t))\in\dR^{n^2}$, we get $\dot{\widetilde{y}}(t) = \widetilde{A}(t)\,\widetilde{y}(t)$ and in turn $\Phi_{\widetilde{A}}(t,t_0)$ is as required by~\eqref{eqn:phi_A_tilde} in the lemma.
\end{proof}

Theorem~\ref{thm:projA_gen} further below furnishes an expression for the projector onto $\cal{A}$. Even though the proof of this theorem uses a classical shooting technique and broadly follows steps similar to those in~\cite[Proposition~1]{BausBuraKaya2019}, the case considered in Theorem~\ref{thm:projA_gen} is more general. To simplify presentation, we establish next a technical result involving the shooting concept.

\begin{lemma}\label{lem:shooting}
Fix $u^-\in {\cal L}^2([t_0,t_f];\mathbb{R}^m)$ and consider the following initial value problem
\begin{equation}\label{eqn:IVP-1}
\begin{array}{lcl}
\dot{z}(t,\lambda_0)&=& A(t){z}(t,\lambda_0) + B(t)\left[u^-(t) - B(t)^T\Phi_A(t_0,t)^T\lambda_0\right],\\
  z(t_0,\lambda_0)&= &x_0, 
\end{array}
\end{equation}
where the dependence of the solution $z(\cdot,\lambda_0)$ on the parameter $\lambda_0$ has been made explicit and, with a slight abuse of notation, $\dot{z}(t,\lambda_0):={\partial z(t,\lambda_0)}/{\partial t}$.  Then,
\begin{itemize}
    \item[(i)] $z(t_f,\cdot) $ is affine (i.e., $z(t_f,\lambda_0) $ is affine in the variable $\lambda_0$). In particular, its partial derivative w.r.t. $\lambda_0$ is a constant $n\times n$ matrix, which we denote as $\partial z(t_f,0)/\partial \lambda_0$.
    \item[(ii)] Let $z(\cdot,0)$ be the solution of \eqref{eqn:IVP-1} when $\lambda_0=0$.  Consider the solution $\overline{\lambda}$ of the linear system
\begin{equation}\label{eqn:Jacfi1}
\frac{\partial z(t_f,0)}{\partial \lambda_0} \overline{\lambda}=x_f-z(t_f,0).
\end{equation}
Then $\overline{\lambda}$ verifies 
\begin{equation}\label{eqn:TPBVP-2}
\begin{array}{lcl}
\dot{z}(t,\overline{\lambda})&=& A(t){z}(t,\overline{\lambda}) + B(t)\left[u^-(t) - B(t)^T\Phi_A(t_0,t)^T\overline{\lambda}\right],\\
  z(t_0,\overline{\lambda})&= &x_0, \\
 {z}(t_f,\overline{\lambda})&=&x_f\,.
\end{array}
\end{equation}
Equivalently,
\begin{equation}\label{eqn:A}
 u(t):=\left[u^-(t) - B(t)^T\Phi_A(t_0,t)^T\,\overline{\lambda}\right] \in \cal{A}.   
\end{equation}

\end{itemize}
\end{lemma}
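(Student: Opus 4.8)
The plan is to prove part (i) by writing the solution of the linear initial value problem \eqref{eqn:IVP-1} in closed form via the variation-of-constants formula \eqref{x_soln}, from which the affine dependence of $z(t_f,\cdot)$ on $\lambda_0$ is manifest; part (ii) then follows in one line from the elementary fact that an affine map coincides with its own first-order expansion.

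\emph{Part (i).} Regard $u^-(t)-B(t)^T\Phi_A(t_0,t)^T\lambda_0$ as the control acting on the linear system in \eqref{eqn:IVP-1} and apply \eqref{x_soln}:
\[
z(t,\lambda_0)=\Phi_A(t,t_0)\,x_0+\int_{t_0}^t\Phi_A(t,\tau)B(\tau)\big[u^-(\tau)-B(\tau)^T\Phi_A(t_0,\tau)^T\lambda_0\big]\,d\tau .
\]
Splitting the integral and pulling the constant vector $\lambda_0$ out of the second piece gives $z(t,\lambda_0)=z(t,0)-M(t)\,\lambda_0$, where $z(t,0)=\Phi_A(t,t_0)x_0+\int_{t_0}^t\Phi_A(t,\tau)B(\tau)u^-(\tau)\,d\tau$ and $M(t):=\int_{t_0}^t\Phi_A(t,\tau)B(\tau)B(\tau)^T\Phi_A(t_0,\tau)^T\,d\tau$ is an $n\times n$ matrix not depending on $\lambda_0$. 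In particular $z(t_f,\cdot)$ is affine and $\partial z(t_f,0)/\partial\lambda_0=-M(t_f)$, a constant $n\times n$ matrix, as claimed. (Alternatively, one may differentiate \eqref{eqn:IVP-1} with respect to $\lambda_0$: the matrix-valued sensitivity $\partial z(t,\lambda_0)/\partial\lambda_0$ solves $\dot v(t)=A(t)v(t)-B(t)B(t)^T\Phi_A(t_0,t)^T$, $v(t_0)=\mathbf 0$, whose right-hand side is independent of $\lambda_0$, so $\partial z/\partial\lambda_0$ is independent of $\lambda_0$ and $z(t,\cdot)$ is affine.) I would also record, using $\Phi_A(t_f,\tau)=\Phi_A(t_f,t_0)\Phi_A(t_0,\tau)$, that $M(t_f)=\Phi_A(t_f,t_0)\int_{t_0}^{t_f}\Phi_A(t_0,\tau)B(\tau)B(\tau)^T\Phi_A(t_0,\tau)^T\,d\tau$; the remaining integral is the controllability Gramian, positive definite by the standing controllability assumption, so $\partial z(t_f,0)/\partial\lambda_0$ is nonsingular and \eqref{eqn:Jacfi1} has a (unique) solution $\overline\lambda$.

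\emph{Part (ii).} Since $z(t_f,\cdot)$ is affine, $z(t_f,\lambda_0)=z(t_f,0)+\dfrac{\partial z(t_f,0)}{\partial\lambda_0}\,\lambda_0$ for every $\lambda_0$. Evaluating at $\lambda_0=\overline\lambda$ and invoking \eqref{eqn:Jacfi1} yields $z(t_f,\overline\lambda)=z(t_f,0)+\big(x_f-z(t_f,0)\big)=x_f$. On the other hand $z(\cdot,\overline\lambda)$ is, by construction, the solution of \eqref{eqn:IVP-1} with $\lambda_0=\overline\lambda$, so it already satisfies the differential equation and the initial condition in \eqref{eqn:TPBVP-2}; combined with $z(t_f,\overline\lambda)=x_f$ this is exactly \eqref{eqn:TPBVP-2}. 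Finally, putting $u(t):=u^-(t)-B(t)^T\Phi_A(t_0,t)^T\overline\lambda$, we have $u\in{\cal L}^2([t_0,t_f];\dR^m)$ (because $u^-\in{\cal L}^2$ and $B,\Phi_A$ are continuous on the compact interval $[t_0,t_f]$), and \eqref{eqn:TPBVP-2} states that $x:=z(\cdot,\overline\lambda)\in{\cal W}^{1,2}([t_0,t_f];\dR^n)$ solves $\dot x=Ax+Bu$, $x(t_0)=x_0$, $x(t_f)=x_f$; by the definition \eqref{A_gen} of ${\cal A}$ this means $u\in{\cal A}$, establishing \eqref{eqn:A}.

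\emph{Main obstacle.} There is no serious difficulty here: the whole proof rests on the observation that \eqref{eqn:IVP-1} is jointly affine in $(z,\lambda_0)$. The only points that need care are the bookkeeping in Part (i) — isolating the term linear in $\lambda_0$ and verifying that $M(t)$ is genuinely $\lambda_0$-free — and, if one wants \eqref{eqn:Jacfi1} to be unambiguously solvable, the identification of $M(t_f)$ with a nonsingular multiple of the controllability Gramian, which is where the controllability assumption is used.
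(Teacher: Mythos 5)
Your proposal is correct, and for part (i) it takes a more direct route than the paper. The paper proves affineness by augmenting the state with the adjoint, writing \eqref{eqn:IVP-1} as a $2n$-dimensional linear system in $(z,\lambda)$ with a coefficient matrix $C(t)$, expressing its solution through the transition matrix $\Phi_C(t,t_0)$, and then verifying the affine-combination identity $g(t,\alpha\lambda_1+(1-\alpha)\lambda_2)=\alpha g(t,\lambda_1)+(1-\alpha)g(t,\lambda_2)$ by manipulation. You instead apply the variation-of-constants formula \eqref{x_soln} to the $n$-dimensional $z$-equation alone, treating $u^-(t)-B(t)^T\Phi_A(t_0,t)^T\lambda_0$ as the input, which immediately yields the explicit decomposition $z(t,\lambda_0)=z(t,0)-M(t)\lambda_0$ with $M(t)=\int_{t_0}^{t}\Phi_A(t,\tau)B(\tau)B(\tau)^T\Phi_A(t_0,\tau)^T\,d\tau$; affineness is then manifest rather than verified, and you get the Jacobian in closed form, $\partial z(t_f,0)/\partial\lambda_0=-M(t_f)=-\Phi_A(t_f,t_0)\,W(t_0,t_f)$ with $W$ the controllability Gramian. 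That last identification is a genuine bonus: it shows, via the standing controllability assumption, that the Jacobian is nonsingular, so the linear system \eqref{eqn:Jacfi1} is uniquely solvable — a point the paper's lemma tacitly assumes but never justifies. What the paper's augmented-system formulation buys in exchange is that it is exactly the coupled system \eqref{eqn:lin_sys} solved numerically in Algorithm~\ref{alg:projA}, so its proof doubles as a derivation of the computational scheme. Your part (ii) coincides with the paper's: evaluate the affine expansion at $\overline{\lambda}$, use \eqref{eqn:Jacfi1} to get $z(t_f,\overline{\lambda})=x_f$, note that $z(\cdot,\overline{\lambda})$ solves the IVP by construction, and conclude $u\in{\cal A}$ from the definition \eqref{A_gen}.
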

\begin{proof}
(i) Using \eqref{eqn:adj} and \eqref{eqn:lambda_gen} we can rewrite the dynamics in \eqref{eqn:IVP-1} as the system
\[
\begin{array}{lcl}
\dot{z}(t,\lambda_0)&=& A(t){z}(t,\lambda_0) + B(t)\left[u^-(t) - B(t)^T\lambda(t)\right],\\
\dot{\lambda}(t)&=& -A^T(t)\lambda(t),
\end{array}
\]
which, using matrix notation, becomes
\begin{align}\label{eqn:zODE}
\begin{bmatrix}\dot{z}(t,\lambda_0) \\ \dot{\lambda}(t)\end{bmatrix} = \begin{bmatrix}A(t) & -B(t)B^T(t) \\ 0_{n\times n} & -A^T(t)\end{bmatrix}\begin{bmatrix}z(t,\lambda_0) \\ \lambda(t)\end{bmatrix} + \begin{bmatrix}B(t) \\ 0_{n\times m}\end{bmatrix}u^-(t)\,.
\end{align}
Let $g(t,\lambda_0):=[z(t,\lambda_0) \ \lambda(t)]^T$. To show (i) it is enough to prove that $g(t,\cdot)$ is affine in $\lambda_0$. Namely, we claim that
\begin{equation}\label{eqn:affine_g}
g(t,\alpha\lambda_1+(1-\alpha)\lambda_2)=\alpha g(t,\lambda_1)+(1-\alpha)g(t,\lambda_2),
\end{equation}
for all $\alpha\in\mathbb{R}$ and $\lambda_1,\lambda_2\in\mathbb{R}^n$. Let the first coefficient matrix on the right-hand side of \eqref{eqn:zODE} be denoted by $C(t)$ and the matrix multiplying $u^-(t)$ be denoted by $D(t)$. Solving \eqref{eqn:zODE} gives
\begin{align*}
g(t,\lambda_0) = \Phi_C(t,t_0)\begin{bmatrix}x_0 \\ \lambda_0\end{bmatrix} + \int_{t_0}^{t} \Phi_C(t,\tau)D(\tau)u^-(\tau)\, d\tau,
\end{align*}
where $\Phi_C(t,t_0)$ is the state transition matrix of $\dot{y}(t) = C(t)y(t)$, for all $t\in[t_0, t_f]$. Next we start off with the left-hand side of~\eqref{eqn:affine_g}, with the aim of getting the right-hand side after direct manipulations.
\begin{align*}
g(t,\alpha\lambda_1+(1-\alpha)\lambda_2) &= \Phi_C(t,t_0)\begin{bmatrix}x_0 \\ \alpha\lambda_1+(1-\alpha)\lambda_2\end{bmatrix}+\gamma(t),
\end{align*}
where $\gamma(t):=\int_{t_0}^{t}\Phi_A(t,\tau)D(\tau)u^-(\tau)\,d\tau$.  Continuing with further manipulations,
\begin{align*}
g(t,\alpha\lambda_1+(1-\alpha)\lambda_2) &= \Phi_C(t,t_0)\begin{bmatrix}\alpha x_0+(1-\alpha)x_0 \\ \alpha\lambda_1+(1-\alpha)\lambda_2\end{bmatrix}+\alpha\gamma(t)+(1-\alpha)\gamma(t) \\
&= \alpha\Phi_C(t,t_0)\begin{bmatrix}x_0 \\ \lambda_1\end{bmatrix}+(1-\alpha)\Phi_C(t,t_0)\begin{bmatrix}x_0 \\ \lambda_2\end{bmatrix}+\alpha\gamma(t)+(1-\alpha)\gamma(t) \\
&= \alpha\left(\Phi_C(t,t_0)\begin{bmatrix}x_0 \\ \lambda_1\end{bmatrix}+\gamma(t)\right)+(1-\alpha)\left(\Phi_C(t,t_0)\begin{bmatrix}x_0 \\ \lambda_2\end{bmatrix}+\gamma(t)\right) \\
&= \alpha g(t,\lambda_1)+(1-\alpha)g(t,\lambda_2),
\end{align*}
which verifies~\eqref{eqn:affine_g} and thus proves the affineness of $g(t,\cdot)$. Hence $z(t_f,\cdot)$ is affine too. The proof of (i) is complete. 

\noindent To prove (ii), we use the fact that $z(t_f,\cdot)$ is affine to write \[
z(t_f,\overline{\lambda})=z(t_f,0)+\frac{\partial z(t_f,0)}{\partial \lambda_0}\,\overline{\lambda} ,
\]
where $\overline{\lambda}$ is as in \eqref{eqn:Jacfi1} and $ {\partial z(t_f,0)}/{\partial \lambda_0}$ is the Jacobian of $z(t_f,\cdot)$ evaluated at $(t_f,0)$. Equation \eqref{eqn:Jacfi1} and the above equality yield
\[
z(t_f,\overline{\lambda})=z(t_f,0)+ (x_f- z(t_f,0))= x_f.
\]
Also note that, by definition, the function $z(\cdot,\overline{\lambda})$ must solve system \eqref{eqn:IVP-1} with $\overline{\lambda}$ in place of $\lambda_0$. Altogether, we have shown that $z(\cdot,\overline{\lambda})$ verifies \eqref{eqn:TPBVP-2}. The last statement of the lemma now follows from \eqref{eqn:TPBVP-2} and the definition of $\mathcal{A}$.
\end{proof}

The next definition points to the connection between Lemma \ref{lem:shooting} and the shooting method.

\begin{definition}\label{def:NMF}
Define the {\em near miss function} as
\begin{equation}\label{eqn:nearmiss}
\varphi(\lambda_0) := z(t_f,\lambda_0)-x_f\,,
\end{equation}
where $\lambda_0\in \mathbf{R}^n$ is arbitrary, and $z$ is as in Lemma \ref{lem:shooting}. Namely,
$z(t_f,\lambda_0)$ is a solution of system \eqref{eqn:IVP-1} evaluated at $(t_f,\lambda_0)$. The function $\varphi$ measures the discrepancy of a solution $z(\cdot,\lambda_0)$ of \eqref{eqn:IVP-1} at the end-point $t=t_f$. By Lemma \ref{lem:shooting}(i) and \eqref{eqn:nearmiss}, $\varphi$ is affine and so its Jacobian $ J_{\varphi}(\lambda_0)$ is a constant matrix such that
\begin{equation}\label{eqn:Jacfi}
    J_{\varphi}(\lambda_0)=J_{\varphi}(0)=\frac{\partial z(t_f,0)}{\partial \lambda_0}.
\end{equation}
In particular, for every $\lambda_0$ we can write
\[
\varphi(\lambda_0)=\varphi(0)+ J_{\varphi}(0) \lambda_0.
\]   
\end{definition}

We are now ready to establish our formula for the projection onto $\mathcal{A}$.

\begin{theorem}  \label{thm:projA_gen}
The projection $P_{\cal{A}}$ of $u^-\in {\cal L}^2([t_0,t_f];\mathbb{R}^m)$ onto the constraint set $\cal{A}$, as the solution of Problem~{\em (P1)}, is given by
\begin{equation}\label{eqn:projA}
P_{\cal{A}}(u^-)(t) = u^-(t) - B(t)^T\,\Phi_A(t_0,t)^T\,\overline{\lambda}\,,
\end{equation}
for all $t\in[t_0,t_f]$, where $\overline{\lambda}$ solves
\begin{equation}\label{eqn:J1}
   \frac{\partial y(t_f)}{\partial \lambda_0}\,\overline{\lambda}= -(y(t_f)-x_f)\,,
\end{equation}
where
\[
y(t_f) := \Phi_A(t_f,t_0)\,x_0 + \int_{t_0}^{t_f} \Phi_A(t_f,\tau)\,B(\tau)\,u^-(\tau)\,d\tau\,.
\]
Moreover,
\begin{equation}\label{eqn:NMF}
  \frac{\partial y(t_f)}{\partial \lambda_0}= J_{\varphi}(0),
    \end{equation}
    where $\varphi$ and $J_{\varphi}(0)$ are as in Definition \ref{def:NMF}.
\end{theorem}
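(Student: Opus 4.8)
The plan is to establish the two defining properties of the metric projection $P_{\cal A}(u^-)$: that the point on the right-hand side of \eqref{eqn:projA} lies in ${\cal A}$, and that the residual $u^- - P_{\cal A}(u^-)$ is ${\cal L}^2$-orthogonal to the affine set ${\cal A}$; the identity \eqref{eqn:NMF} then falls out of the identifications made along the way. First I would record the setup: ${\cal A}$ is nonempty by the controllability assumption and closed by Corollary~\ref{cor: A closed}, so Problem~(P1)---the minimization of the strongly convex, coercive functional $\frac12\|u-u^-\|_{{\cal L}^2}^2$ over a nonempty closed convex set---has a unique solution, which by definition is $P_{\cal A}(u^-)$, and which is characterized by $P_{\cal A}(u^-)\in{\cal A}$ together with $\langle w - P_{\cal A}(u^-)\,|\,u^- - P_{\cal A}(u^-)\rangle_{{\cal L}^2}\le 0$ for all $w\in{\cal A}$.

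Next I would translate the notation of the theorem into that of Lemma~\ref{lem:shooting}. Applying the variation-of-constants formula \eqref{x_soln} to the initial value problem \eqref{eqn:IVP-1} at $\lambda_0=0$, the bracketed control collapses to $u^-$ and one obtains $z(t_f,0)=\Phi_A(t_f,t_0)\,x_0+\int_{t_0}^{t_f}\Phi_A(t_f,\tau)\,B(\tau)\,u^-(\tau)\,d\tau=y(t_f)$. Reading $y(t_f)$ as the endpoint value of the $\lambda_0$-parametrized solution $z(\cdot,\lambda_0)$, Lemma~\ref{lem:shooting}(i) says $\partial y(t_f)/\partial\lambda_0$ is the constant matrix $\partial z(t_f,0)/\partial\lambda_0$, which equals $J_\varphi(0)$ by \eqref{eqn:Jacfi}; this is precisely \eqref{eqn:NMF}. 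It also shows that \eqref{eqn:J1} and \eqref{eqn:Jacfi1} are the same linear system, so the $\overline{\lambda}$ of the theorem coincides with the $\overline{\lambda}$ of Lemma~\ref{lem:shooting}(ii), and that lemma already delivers $u:=u^- - B(\cdot)^T\Phi_A(t_0,\cdot)^T\overline{\lambda}\in{\cal A}$, the first defining property.

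It then remains to verify optimality, for which I would in fact prove the stronger statement $\langle w-u\,|\,u^- - u\rangle_{{\cal L}^2}=0$ for every $w\in{\cal A}$ (as expected for an affine target). Put $\mu(t):=\Phi_A(t_0,t)^T\overline{\lambda}$, so that $u^- - u = B(\cdot)^T\mu(\cdot)$, and note from \eqref{eqn:adj}--\eqref{eqn:lambda_gen} that $\dot{\mu}(t)=-A(t)^T\mu(t)$. For a given $w\in{\cal A}$, let $x_w,x_u\in{\cal W}^{1,2}([t_0,t_f];\dR^n)$ be the states generated by $w$ and $u$; their difference $d:=x_w-x_u$ satisfies $\dot{d}(t)=A(t)\,d(t)+B(t)\big(w(t)-u(t)\big)$ with $d(t_0)=0$ and $d(t_f)=0$, since both trajectories run from $x_0$ to $x_f$. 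Then $\frac{d}{dt}\big(\mu(t)^T d(t)\big)=\dot{\mu}(t)^T d(t)+\mu(t)^T\dot{d}(t)=\mu(t)^T B(t)\big(w(t)-u(t)\big)$, the $A$-terms cancelling; integrating over $[t_0,t_f]$ gives $\int_{t_0}^{t_f}\big(w(t)-u(t)\big)^T B(t)^T\mu(t)\,dt=\mu(t_f)^T d(t_f)-\mu(t_0)^T d(t_0)=0$. Since the integrand is exactly $\big(w(t)-u(t)\big)^T\big(u^-(t)-u(t)\big)$, this is $\langle w-u\,|\,u^- - u\rangle_{{\cal L}^2}=0$. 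Hence $u$ meets both defining properties, so $u=P_{\cal A}(u^-)$ and \eqref{eqn:projA}--\eqref{eqn:J1} hold.

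The main obstacle is the orthogonality computation of the last paragraph: it works because $\mu(t)=\Phi_A(t_0,t)^T\overline{\lambda}$ solves the homogeneous adjoint equation $\dot{\mu}=-A^T\mu$ and because membership in ${\cal A}$ forces the trajectory difference $d$ to vanish at \emph{both} endpoints, killing the boundary terms---spotting the right co-state $\mu$ is the one genuinely new idea, everything else being a citation of Lemma~\ref{lem:shooting} or a direct substitution. A minor point to treat with care is the meaning of the symbol $\partial y(t_f)/\partial\lambda_0$, since $y(t_f)$ as written does not display its $\lambda_0$-dependence; this is settled by the identification $y(t_f)=z(t_f,0)$ above.
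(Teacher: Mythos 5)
Your proof is correct, and it differs from the paper's in the step that certifies optimality. The paper proves the theorem by applying Pontryagin's maximum principle to Problem~(P1): the condition $H_u[t]=0$ yields the ansatz $u=u^--B^T\lambda$, and Lemma~\ref{lem:shooting} is then invoked exactly as you do to pin down $\overline{\lambda}$ via \eqref{eqn:Jacfi1} (identified with \eqref{eqn:J1} through $y(\cdot)=z(\cdot,0)$, which also gives \eqref{eqn:NMF}); the paper then simply asserts that this $u$ is the projection, leaving the sufficiency of the PMP conditions for this convex problem implicit. You replace that step by a direct verification of the characterization $\langle w-u\,|\,u^--u\rangle_{{\cal L}^2}\le 0$ (indeed $=0$): setting $\mu(t)=\Phi_A(t_0,t)^T\overline{\lambda}$, using $\dot\mu=-A^T\mu$ and the fact that the trajectory difference $d=x_w-x_u$ vanishes at both endpoints, the identity $\frac{d}{dt}(\mu^Td)=\mu^TB(w-u)$ integrates to zero. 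This adjoint integration-by-parts argument is elementary and self-contained, and it buys a fully rigorous sufficiency proof that the paper's PMP route only sketches; the paper's route, in turn, keeps the derivation uniform with the rest of Section~\ref{sec:proj}, where all projectors are read off optimality conditions. One caveat: your appeal to Corollary~\ref{cor: A closed} for closedness of ${\cal A}$ (to get a priori existence of the solution of (P1)) is circular in the paper's logic, since that corollary deduces closedness \emph{from} the existence of projections, i.e.\ from this very theorem. The circularity is harmless because your argument does not need a priori existence: exhibiting $u\in{\cal A}$ with $\langle w-u\,|\,u^--u\rangle=0$ for all $w\in{\cal A}$ already shows, by expanding $\|w-u^-\|^2=\|w-u\|^2+\|u-u^-\|^2$, that $u$ is the unique minimizer of (P1); it would be cleaner to drop the closedness remark and state this expansion instead.
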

\begin{proof}
The Hamiltonian for Problem (P1) is
\begin{equation*}
H[t] := \frac{1}{2}\|u(t)-u^-(t)\|^2+\lambda^T(t)(A(t)x(t)+B(t)u(t)).
\end{equation*}
From Pontryagin's maximum principle, $H_u[t] = 0$ and so
\begin{align}\label{eqn:gen_u}
u(t) = u^-(t)-B(t)^T\lambda(t)\,,  
\end{align}
for all $t\in[t_0,t_f]$.  To fully solve this equation, we need to find $\lambda(\cdot)$ such that the function $u$ on the left hand-side of \eqref{eqn:gen_u} belongs to ${\cal A}$. Equation \eqref{eqn:lambda_gen} gives $\lambda(\cdot)$ as a function of an initial condition $\lambda_0$. The aim is therefore to determine an initial condition such that the corresponding $\lambda(\cdot)$ produces a function $u$ in ${\cal A}$. To avoid confusion, we call $\overline{\lambda}$ this desired initial condition. We now use the last statement of Lemma \ref{lem:shooting}, which states that $\overline{\lambda}$ as in \eqref{eqn:Jacfi1} ensures that $u$ as in \eqref{eqn:A} is in $\mathcal{A}$. Altogether, this choice of $\overline{\lambda}$ verifies
\begin{align}
u(t) = u^-(t)-B(t)^T\lambda(t)=u^-(t)-B(t)^T\Phi_A(t_0,t)^T \,\overline{\lambda}\in \mathcal{A}\,,  \label{eqn:gen_u2}
\end{align}
where we used \eqref{eqn:A} in the inclusion and in the second equality we used \eqref{eqn:lambda_gen}   with $\overline{\lambda}$ in place of $\lambda_0$.   Hence, $u$ is the desired projection onto $\mathcal{A}$ for this choice of $\overline{\lambda}$.

To establish the rest of the theorem, call $y(\cdot):=z(\cdot,0)$, where $z(\cdot,0)$ is a solution of the IVP \eqref{eqn:IVP-1} for $\lambda_0:=0$. Therefore, we have
\begin{equation}  \label{eqn:lineq_xf}
y(t_f) = \Phi_A(t_f,t_0)\,x_0 + \int_{t_0}^{t_f}\,\Phi_A(t_f,\tau)\,B(\tau)\,u^-(\tau)\,d\tau\,.
\end{equation}
Since $y(\cdot) = z(\cdot,0)$ we have that
\[
\frac{\partial z(t_f,0)}{\partial \lambda_0}=\frac{\partial y(t_f)}{\partial \lambda_0},
\]
so condition \eqref{eqn:Jacfi1} in the lemma becomes \eqref{eqn:J1}. The last statement of the theorem follows from the above equality and \eqref{eqn:Jacfi}.
\end{proof}


\begin{corollary}\label{cor: A closed}
Let $X$ be a Hilbert space and let $C\subset X$ be a nonempty convex set. Assume that for every $u\in X$ there exists the projection $P_C(u)$ of $u$ onto $C$. Then the set $C$ must be closed.    
\end{corollary}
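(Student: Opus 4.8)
The plan is to show that $C$ contains all its limit points, which for a subset of a metric space is equivalent to closedness. So I would fix an arbitrary sequence $(c_n)_{n\in\mathbb{N}}$ in $C$ with $c_n \to x$ for some $x \in X$, and prove that $x \in C$ by identifying $x$ with its own projection $P_C(x)$, which exists by hypothesis.

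First I would invoke the characterizing inequality recalled just before Problem~(P1): since $P_C(x) \in C$, for every $y \in C$ one has $\langle y - P_C(x) \mid x - P_C(x)\rangle \le 0$. Applying this with the admissible choice $y = c_n$ yields $\langle c_n - P_C(x) \mid x - P_C(x)\rangle \le 0$ for every $n$. Next I would write the telescoping identity $\|x - P_C(x)\|^2 = \langle x - c_n \mid x - P_C(x)\rangle + \langle c_n - P_C(x) \mid x - P_C(x)\rangle$, discard the second term using the previous inequality, and apply Cauchy--Schwarz to the first, obtaining $\|x - P_C(x)\|^2 \le \|x - c_n\|\,\|x - P_C(x)\|$, hence $\|x - P_C(x)\| \le \|x - c_n\|$. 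Letting $n \to \infty$ and using $c_n \to x$ forces $\|x - P_C(x)\| = 0$, i.e. $x = P_C(x) \in C$. Since $x$ was an arbitrary limit of a sequence in $C$, the set $C$ is closed.

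I do not expect a genuine obstacle here; the only subtlety is that the hypothesis only supplies \emph{a} point $P_C(u)$ satisfying the variational inequality, not a priori the metric (nearest-point) projection, so the argument is cleanest if run directly from that inequality as above rather than from a "nearest-point" property. (For convex $C$ the two notions coincide, as a one-line expansion of $\|x-y\|^2$ shows, so an equally short alternative is to first derive $\|x-P_C(x)\|\le\|x-y\|$ for all $y\in C$ and then specialize to $y=c_n$.) Either route gives the result in a few lines.
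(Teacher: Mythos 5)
Your proof is correct. It reaches the same intermediate goal as the paper -- showing that any limit point $x$ of $C$ coincides with its own projection $P_C(x)$, hence lies in $C$ -- but by a different mechanism. The paper's proof is a one-liner based on the metric (nearest-point) property: for $z\in\operatorname{cl}C$ it writes $\|z-P_C(z)\| = d(z,C) = d(z,\operatorname{cl}C) = 0$, using that the projection minimizes the distance to $C$ and that the distance to a set equals the distance to its closure. You instead argue from the variational-inequality characterization $\langle y-P_C(x)\mid x-P_C(x)\rangle\le 0$ (the very definition recalled before Problem~(P1)), plug in $y=c_n$ for a sequence $c_n\to x$ in $C$, and combine the telescoping identity with Cauchy--Schwarz to get $\|x-P_C(x)\|\le\|x-c_n\|\to 0$. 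Your route is slightly longer but has the merit of using only the characterization of $P_C$ that the statement's hypothesis most directly supplies, without invoking the distance-minimizing property (whose equivalence with the variational inequality you correctly note holds for convex $C$); the paper's route buys brevity at the cost of implicitly appealing to that equivalence and to $d(z,C)=d(z,\operatorname{cl}C)$. Both are complete and correct.
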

\begin{proof}
Denote by ${\rm cl\, }C$ the closure of $C$ and take any $z\in {\rm cl\, }C$. 
By assumption on $C$, the projection $P_C(z)$ of $z$ onto $C$ exists. The definitions imply that
\[
\|z-P_C(z)\|=d(z,C)=d(z,{\rm cl\, }C)=0.
\]
Hence, $z=P_C(z)\in C$. So ${\rm cl \,}C\subset C$ and therefore $C$ is closed.
\end{proof}

\begin{remark} \rm
One can find the elements of ${\partial y(t_f)}/{\partial \lambda_0}=J_{\varphi}(0)$ by solving the variational equations 
\begin{equation}  \label{eqn:z}
 \frac{\partial z(t,\lambda_0)}{\partial t} = A(t)z(t,\lambda_0) + B(t)\left[u^-(t) - B(t)^T\Phi_A(t_0,t)^T\lambda_0\right],\ \ z(t_0,\lambda_0)=x_0\,.
 \end{equation}
with respect to $\lambda_0$, i.e., by solving the following equations in $(\partial z / \partial \lambda_{0,i})(t,\lambda_0) \in \dR^n$, for $i=1,\dots,n$.
\[
\frac{\partial}{\partial t}\left(\frac{\partial z}{\partial \lambda_{0,i}}\right)(t,\lambda_0) =  A(t)\frac{\partial z}{\partial \lambda_{0,i}}(t,\lambda_0) - B(t)B(t)^T\Phi_A(t_0,t)^T\,e_i
\]
where $e_i\in\mathbb{R}^n$ are the canonical basis vectors, i.e, with 1 in the $i$th coordinate and zero elsewhere. Let $\widetilde{y}(t),\dot{\widetilde{y}}(t)\in\mathbb{R}^{n^2}$ where
\begin{equation*}
\widetilde{y} := \begin{bmatrix}
\partial z/\partial \lambda_{0,1} \\
\vdots \\
\partial z/\partial \lambda_{0,n}
\end{bmatrix} = \begin{bmatrix}
y_1 \\ \vdots \\ y_n
\end{bmatrix} \ \
\textrm{ and } \ \
\dot{\widetilde{y}} := \begin{bmatrix}
\frac{\partial}{\partial t}\left(\partial z/\partial \lambda_{0,1}\right) \\
\vdots \\
\frac{\partial}{\partial t}\left(\partial z/\partial \lambda_{0,n}\right)
\end{bmatrix}= \begin{bmatrix}
\dot{y}_1 \\ \vdots \\ \dot{y}_n
\end{bmatrix},
\end{equation*}
then $\dot{\widetilde{y}}=\widetilde{A}(t)\widetilde{y}+\widetilde{B}(t),\ \widetilde{y}(t_0)=0$ where
\begin{equation*}
\widetilde{A}(t) = \begin{bmatrix}
A(t) & & \mathbf{0}\\
& \ddots & \\
\mathbf{0} & & A(t) 
\end{bmatrix}
\in\mathbb{R}^{n^2\times n^2} \ \
\textrm{ and } \ \
\widetilde{B}(t) = -B(t)B(t)^T\Phi_A(t_0,t)^T\begin{bmatrix}
e_1 \\
\vdots \\
e_n
\end{bmatrix}
\in\mathbb{R}^{n^2}.
\end{equation*}
Using Lemma \ref{lem:A_tilde} along with knowledge of differential equations
\begin{equation}\label{eqn:y}
\widetilde{y}(t) = \int_{t_0}^{t}\Phi_{\widetilde{A}}(t,\tau)\widetilde{B}(\tau)\,d\tau,
\end{equation}
where ${\Phi}_{\widetilde{A}}(t,t_0)$ is the transition matrix of $\dot{\widetilde{y}}=\widetilde{A}(t)\widetilde{y}$. So evaluating the above integral and substituting $t=t_f$ gives the components of ${\partial y(t_f)}/{\partial \lambda_0}$.
\proofbox
\end{remark}


\begin{theorem}[Projection onto \boldmath{${\cal B}$}]\label{thm:projB}
The projection $P_{{\cal B}}$ of $u^-\in {\cal L}^2([t_0,t_f];\dR)$ onto the constraint set ${\cal B}$, as the solution of Problem~{\em(P2)},  is given by
\begin{eqnarray}\label{eqn:projB}
[P_{{\cal B}}(u^-)(t)]_i = \left\{\begin{array}{rl}
   a_i, & \mbox{if } u^-_i(t) \geq a_i, \\[2mm]
  u^-_i(t), & \mbox{if } -a_i\leq u^-_i(t) \leq a_i, \\[2mm]
  -a_i, & \mbox{if } u^-_i(t) \leq -a_i,
\end{array} \right.
\end{eqnarray}
for all $t\in[t_0,t_f],\ i=1,\dots,m$. \end{theorem}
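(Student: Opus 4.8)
The plan is to use the variational characterization of the projection recalled just before Problem~(P1): since $\mathcal{B}$ is a nonempty, closed and convex subset of the Hilbert space $\mathcal{L}^2([t_0,t_f];\mathbb{R}^m)$, the projection $P_{\mathcal{B}}(u^-)$ exists and is unique, and it is the unique point $v\in\mathcal{B}$ satisfying $\langle w-v\mid u^--v\rangle_{\mathcal{L}^2}\le 0$ for every $w\in\mathcal{B}$. So it suffices to denote by $v$ the function defined by the right-hand side of~\eqref{eqn:projB} and to check these two properties.

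First I would verify that $v$ is a legitimate element of $\mathcal{B}$. Componentwise $v_i(t)=\max\{-a_i,\min\{u_i^-(t),a_i\}\}$ is the composition of the measurable function $u_i^-$ with the continuous clipping map $s\mapsto\max\{-a_i,\min\{s,a_i\}\}$, hence measurable; moreover $|v_i(t)|\le|u_i^-(t)|$ for a.e.\ $t$, so $v\in\mathcal{L}^2$, and by construction $-a_i\le v_i(t)\le a_i$, i.e.\ $v\in\mathcal{B}$.

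Next comes the variational inequality. Because the $\mathcal{L}^2$ inner product is the integral over $t$ of the Euclidean inner product on $\mathbb{R}^m$, which is in turn the sum over $i$ of the scalar products, it is enough to prove the pointwise scalar inequality
\[
\bigl(w_i(t)-v_i(t)\bigr)\bigl(u_i^-(t)-v_i(t)\bigr)\le 0
\]
for each $i=1,\dots,m$ and a.e.\ $t$, whenever $w\in\mathcal{B}$ (so $|w_i(t)|\le a_i$), and then integrate and sum. This is exactly the one-dimensional statement that clipping to $[-a_i,a_i]$ is the metric projection onto that interval, and it is dispatched by the three cases in~\eqref{eqn:projB}: if $u_i^-(t)\ge a_i$ then $v_i(t)=a_i$, so $u_i^-(t)-v_i(t)\ge 0$ while $w_i(t)-a_i\le 0$; if $u_i^-(t)\le -a_i$ the two signs are reversed; and if $-a_i\le u_i^-(t)\le a_i$ then $v_i(t)=u_i^-(t)$ and the left-hand side vanishes.

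There is no genuine obstacle here; the only points requiring a line of care are the measurability and square-integrability of $v$ (handled above via the continuity of the clipping map and the pointwise bound $|v_i|\le|u_i^-|$) and the passage from the pointwise inequality to the integrated one, which is immediate since $t_f-t_0<\infty$ and all the integrands are dominated by $\mathcal{L}^1$ functions. Combining the two verified properties, $v$ satisfies both defining conditions of $P_{\mathcal{B}}(u^-)$, and uniqueness of the projection then gives $P_{\mathcal{B}}(u^-)=v$, which is~\eqref{eqn:projB}.
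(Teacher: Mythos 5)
Your proof is correct and rests on the same idea as the paper's: the paper's proof is the one-line remark that Problem~(P2) is separable in the components $u_i$ (and pointwise in $t$), which reduces everything to the scalar projection onto $[-a_i,a_i]$, i.e.\ clipping. You simply flesh out that reduction, verifying the clipped function lies in $\mathcal{B}$ and checking the variational characterization $\langle w-v\mid u^--v\rangle\le 0$ pointwise and componentwise before integrating, so your argument is a detailed version of the paper's separability proof rather than a different route.
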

\begin{proof}
Simply use separability of Problem~(P2) in $u_i$, $i = 1,\ldots,m$.
\end{proof}

\subsection{Projectors for special cases}
\label{subsec:proj_2var}

In this subsection we consider problems with two state variables ($n = 2$) and one control variable ($m=1$).  In particular we consider problems involving the double integrator and the pure and damped harmonic oscillators, for which the general system and control matrices in set ${\cal A}$ in~\eqref{A_gen} become
\begin{equation}\label{eqn:matrices}
A(t) = A = \begin{bmatrix}
0 & 1 \\
-\omega_0^2 & -2\zeta\omega_0
\end{bmatrix}\quad\mbox{and}\quad
B(t) = b = \begin{bmatrix}
0 \\
1
\end{bmatrix},
\end{equation}
where $\omega_0$ is the natural frequency and $\zeta$ is the damping ratio.  Note that $\zeta = 0$ is the case of pure (undamped) harmonic oscillator, and $0<\zeta<1$ under-damped, $\zeta=1$ critically-damped, and $\zeta>1$ over-damped harmonic oscillator.  The general forms of the constraint sets can be found in \eqref{A_gen}--\eqref{B_gen} but for this specialization we define
\begin{eqnarray} 
{\cal A}_{\omega_0,\zeta} &:=& \big\{u\in {\cal L}^2([0,t_f];\dR)\ |\ \exists x\in {\cal W}^{1,2}([0,t_f];\dR^2)\mbox{ which solves } \nonumber \\
&&\ \ \dot{x}_1(t) = x_2(t)\,,\ x_1(0) = s_0\,,\ x_1(t_f) = s_f\,, \nonumber \\[1mm]
&&\ \ \dot{x}_2(t) = -\omega_0^2 x_1(t) -2\zeta\omega_0 x_2(t) + u(t)\,,\ \ \,x_2(0) = v_0\,,\ x_2(t_f) = v_f\,,\nonumber\\[1mm] 
&&\ \ \forall t\in[0,t_f]\big\}\,, \label{A} \\[2mm]
{\cal B} &:=& \big\{u\in {\cal L}^2([0,t_f];\dR)\ | -a\le u(t)\le a\,,\ \forall t\in[0,t_f]\big\}\,.\label{B}
\end{eqnarray}
To maintain the flow of this paper we move the proofs from this subsection to \ref{sec:proofs} as these are rather lengthy and the proof techniques follow a similar pattern. In the lemmas we complete some of the technical steps by deriving expressions for the state transition matrices and Jacobians required in Theorem \ref{thm:projA_gen}. Then the corollaries follow by direct substitution into the expression \eqref{eqn:projA} in Theorem~\ref{thm:projA_gen}. Since we find analytical expressions in each of the 
lemmas we express the inverse of the Jacobian and use it directly in the expression in Theorem~\ref{thm:projA_gen}.

In the case where the inverse of the Jacobian is analytical and not lengthy we express $\lambda_0$ as
\[
\lambda_0 = -[J_\varphi(0)]^{-1}(x(t_f)-x_f)
\]
to have a more closed form expression for the projector:
\begin{multline}\label{eqn:projA_spec}
P_{\cal{A}}(u^-)(t) = u^-(t) + B(t)^T\Phi_A(t_0,t)^T[J_\varphi(0)]^{-1}\bigg(\Phi_A(t_f,t_0)\,x_0 \\ + \int_{t_0}^{t_f}\Phi_A(t_f,\tau)B(\tau)u^-(\tau)d\tau-x_f\bigg).
\end{multline}
In the cases of the double integrator as well as the under-, critically- and over-damped harmonic oscillators the inverse of the Jacobian is simple enough, so we will use \eqref{eqn:projA_spec}.

\subsubsection{Double integrator}\label{subsec:projPDI}
The dynamics of the double integrator are given by $\ddot{y}(t) = f(t)$, where $f(t)$ stands for forcing, which typically models the motion of a point mass (or analogously, an electric circuit or a fluid system with capacitance)---see pertaining references in \cite{BausBuraKaya2019}, where $y(t)$ is the position and $\dot{y}(t)$ the velocity at time $t$. With $x_1 := y$ and $x_2 := \dot{y}$, one gets the state equations $\dot{x}_1 = x_2$ and $\dot{x}_1 = u$; in other words, $\omega_0=0$ and $\zeta=0$, resulting in the constraint set ${\cal A}_{0,0}$.  We note from~\eqref{eqn:matrices} that
\[
A = \begin{bmatrix}
0 &\ 1 \\
0 &\ 0
\end{bmatrix}. \ \
\]
The minimum-energy problem that we consider corresponds, for example to the practical problem of engineering where one would like to minimize the average magnitude of the force, or the problem of designing cubic (variational) curves.

In what follows we present the projections onto ${\cal A}_{0,0}$ and $\cal B$ in the Corollaries \ref{cor:projA_PDI} and \ref{cor:projB} below. These two results and their proofs can be found in \cite{BausBuraKaya2019}.

Recall the definition of the state transition matrix $\Phi_A(t,t_0)$ via \eqref{x_soln} and the definition of the Jacobian $J_\varphi(0)$ in \eqref{eqn:Jacfi}.  The following lemma evaluates $\Phi_A(t,0)$ and $[J_\varphi(0)]^{-1}$ for the double integrator, which are utilized in the proof of Corollary~\ref{cor:projA_PDI}.
\begin{lemma}[Computation of $\Phi_A$ and $J_\varphi$ for $\omega_0=0,\,\zeta=0$] \label{lem:PDE}
One has that
\begin{equation}  \label{PDI_Phi}
    \Phi_A(t,0) = e^{At} = \begin{bmatrix}
1 & t \\
0 & 1
\end{bmatrix}, \quad
[J_\varphi(0)]^{-1} = \begin{bmatrix}
-12 & 6 \\ -6 & 2
\end{bmatrix}.
\end{equation}
\end{lemma}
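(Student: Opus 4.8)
The plan is to compute the two claimed matrices directly, since the double-integrator dynamics are nilpotent and everything integrates in closed form. First I would obtain $\Phi_A(t,0)$: because $A = \begin{bmatrix} 0 & 1 \\ 0 & 0\end{bmatrix}$ satisfies $A^2 = 0$, the matrix exponential truncates after one term, so $e^{At} = I + At = \begin{bmatrix} 1 & t \\ 0 & 1\end{bmatrix}$, and since $A$ is constant the state transition matrix $\Phi_A(t,t_0) = e^{A(t-t_0)}$, giving $\Phi_A(t,0) = e^{At}$ as stated.

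Next I would compute the Jacobian $J_\varphi(0) = \partial y(t_f)/\partial\lambda_0$ using the variational-equation description recalled in the Remark preceding Theorem~\ref{thm:projB}, specialized to $n=2$, $B(t)\equiv b = (0,1)^T$. The relevant forcing term is $-b\,b^T\,\Phi_A(t_0,t)^T e_i$. Here $\Phi_A(0,t) = e^{-At} = \begin{bmatrix} 1 & -t \\ 0 & 1\end{bmatrix}$, so $\Phi_A(0,t)^T = \begin{bmatrix} 1 & 0 \\ -t & 1\end{bmatrix}$, and $b\,b^T = \begin{bmatrix} 0 & 0 \\ 0 & 1\end{bmatrix}$. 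Then one integrates $\dot{y}_i(t) = A\,y_i(t) - b\,b^T\,\Phi_A(0,t)^T e_i$ with $y_i(0)=0$ over $[0,t_f]$ for $i=1,2$, via the variation-of-constants formula $y_i(t_f) = \int_0^{t_f} \Phi_A(t_f,\tau)\,\widetilde B_i(\tau)\,d\tau$. The integrands are polynomials in $\tau$ of degree at most two, so the integrals are elementary; assembling the two columns $y_1(t_f), y_2(t_f)$ yields $J_\varphi(0)$ as an explicit function of $t_f$, and inverting the resulting $2\times2$ matrix produces $[J_\varphi(0)]^{-1}$. (In the normalization used in \cite{BausBuraKaya2019} one takes $t_f=1$, which is presumably why the entries come out as the pure numbers $\begin{bmatrix} -12 & 6 \\ -6 & 2\end{bmatrix}$; I would make that normalization explicit, or else carry the $t_f$-dependence through and specialize at the end.)

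The only mildly delicate point is bookkeeping: keeping straight which state-transition matrix ($\Phi_A(t,0)$ versus $\Phi_A(0,t) = \Phi_A(t,0)^{-1}$) and which transpose appears in the forcing term $\widetilde B$, and correctly reading off the columns of $J_\varphi(0)$ from the two solutions $y_1,y_2$. Everything else is a routine closed-form integration of low-degree polynomials and a $2\times2$ matrix inversion, so I expect no genuine obstacle — the content is simply the substitution of the nilpotent $A$ into the general machinery of Theorem~\ref{thm:projA_gen} and the Remark following it, exactly as the surrounding text announces. I would present the computation of $\Phi_A$ in one line (nilpotency) and then set up and evaluate the variational system for $J_\varphi(0)$, finishing by exhibiting its inverse.
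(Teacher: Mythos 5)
Your proposal is correct and follows essentially the same route as the paper: the paper also gets $\Phi_A(t,0)=e^{At}=I+At$ directly and then evaluates the variational formula \eqref{eqn:y} (stacked as the $4\times 4$ system with $\widetilde{A}$, $\widetilde{b}=(0,\,t,\,0,\,-1)^T$) over $[0,1]$, obtaining $J_\varphi(0)=\left[\begin{smallmatrix}1/6 & -1/2\\ 1/2 & -1\end{smallmatrix}\right]$ and inverting it; your column-by-column integration of the two variational ODEs is the same computation. Your remark about the implicit normalization $t_f=1$ is also consistent with the paper, which indeed integrates from $0$ to $1$ in this case.
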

\begin{proof}
See \hyperlink{lem:PDE_proof}{proof} in \ref{sec:proofs}.
\end{proof}

The following result is a corollary of Theorem \ref{thm:projA_gen}.  As mentioned already, this result along with its proof can be found in~\cite{BausBuraKaya2019}, but we present here a new proof which directly substitutes the expressions in Lemma~\ref{lem:PDE} into Theorem~\ref{thm:projA_gen}.
\begin{corollary}
[Projection onto ${\cal A}_{0,0}$~\cite{BausBuraKaya2019}]  \label{cor:projA_PDI}
The projection $P_{{\cal A}_{0,0}}$ of $u^-\in {\cal L}^2([0,1];\dR)$ onto the constraint set ${\cal A}_{0,0}$, as the solution of Problem~{\em(P1)} with $\omega_0=0$ and $\zeta=0$, is given by
\begin{equation}\label{eqn:u_PDI}
P_{{\cal A}_{0,0}}(u^-)(t) = u^-(t) + c_1\,t + c_2\,,
\end{equation}
for all $t\in[0,1]$, where
{\begin{eqnarray}
&& c_1 := 12\left(s_0+v_0-s_f+\int_0^1 (1-\tau)u^-(\tau)d\tau\right)-6\left(v_0-v_f+\int_0^1 u^-(\tau)d\tau\right), \\[1mm]
&& c_2 := -6\left(s_0+v_0-s_f+\int_0^1 (1-\tau)u^-(\tau)d\tau\right)+2\left(v_0-v_f+\int_0^1 u^-(\tau)d\tau\right).
\end{eqnarray}}
\end{corollary}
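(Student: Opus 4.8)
The plan is to apply Theorem~\ref{thm:projA_gen} directly, substituting the explicit data for the double integrator computed in Lemma~\ref{lem:PDE}, and then to simplify the resulting integral expression to the stated form. Since the time interval is $[0,1]$, I take $t_0 = 0$ and $t_f = 1$ throughout, and I write $x_0 = (s_0, v_0)$, $x_f = (s_f, v_f)$, and $B(t) = b = (0,1)^T$.

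First I would record the ingredients needed in formula~\eqref{eqn:projA_spec} (the closed-form version of Theorem~\ref{thm:projA_gen} that is legitimate to use here because, by Lemma~\ref{lem:PDE}, $[J_\varphi(0)]^{-1}$ is available in a short analytic form). From Lemma~\ref{lem:PDE}, $\Phi_A(t,0) = \bigl[\begin{smallmatrix}1 & t\\ 0 & 1\end{smallmatrix}\bigr]$, so $\Phi_A(t_0,t)^T = \Phi_A(0,t)^T = \bigl[\begin{smallmatrix}1 & 0\\ -t & 1\end{smallmatrix}\bigr]$ (using $\Phi_A(0,t) = \Phi_A(t,0)^{-1}$), and hence $b^T \Phi_A(0,t)^T = (-t, 1)$. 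Also $\Phi_A(t_f,t_0) = \Phi_A(1,0) = \bigl[\begin{smallmatrix}1 & 1\\ 0 & 1\end{smallmatrix}\bigr]$, and $\Phi_A(t_f,\tau) b = \Phi_A(1,\tau) b = (1-\tau,\,1)^T$ by the same computation with $t \mapsto 1-\tau$ in the transition matrix (equivalently, the second column of $\Phi_A(1,\tau)$).

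Next I would assemble the vector
\[
w := \Phi_A(1,0)\,x_0 + \int_0^1 \Phi_A(1,\tau)\, b\, u^-(\tau)\, d\tau - x_f
= \begin{bmatrix} s_0 + v_0 - s_f + \int_0^1 (1-\tau)\, u^-(\tau)\, d\tau \\[1mm] v_0 - v_f + \int_0^1 u^-(\tau)\, d\tau \end{bmatrix},
\]
and then compute $[J_\varphi(0)]^{-1} w$ using the matrix from Lemma~\ref{lem:PDE}. Calling the two components of $w$ by $w_1$ and $w_2$, this gives $[J_\varphi(0)]^{-1} w = (-12 w_1 + 6 w_2,\ -6 w_1 + 2 w_2)^T$. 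Finally, plugging into~\eqref{eqn:projA_spec},
\[
P_{{\cal A}_{0,0}}(u^-)(t) = u^-(t) + b^T \Phi_A(0,t)^T [J_\varphi(0)]^{-1} w = u^-(t) + (-t,\,1)\begin{bmatrix} -12 w_1 + 6 w_2 \\ -6 w_1 + 2 w_2 \end{bmatrix} = u^-(t) + c_1 t + c_2,
\]
where $c_1 = 12 w_1 - 6 w_2$ and $c_2 = -6 w_1 + 2 w_2$, which matches the stated expressions once $w_1, w_2$ are written out.

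This argument is essentially a bookkeeping exercise, so there is no serious obstacle; the only point requiring a little care is the correct handling of the transition-matrix inverses and transposes — specifically, getting $\Phi_A(0,t)^T$ and the column $\Phi_A(1,\tau) b$ right, since an error in a sign or in the slot $t$ versus $1-\tau$ would propagate into $c_1$ and $c_2$. One could also double-check the final formula independently by verifying that the resulting $u$ satisfies the two endpoint conditions $x_1(1) = s_f$, $x_2(1) = v_f$ via the solution formula~\eqref{x_soln}, which provides a self-contained consistency check that the constants are correct.
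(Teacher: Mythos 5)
Your proposal is correct and takes essentially the same route as the paper's own proof: direct substitution of $\Phi_A$ and $[J_\varphi(0)]^{-1}$ from Lemma~\ref{lem:PDE} into the closed-form expression~\eqref{eqn:projA_spec}, followed by the same matrix algebra. Your intermediate quantities $b^T\Phi_A(0,t)^T=(-t,1)$, the vector $w$, and the resulting $c_1$, $c_2$ all agree with the paper's computation.
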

\begin{proof}
See \hyperlink{cor:PDE_proof}{proof} in \ref{sec:proofs}.
\end{proof}

The following result is a direct consequence of  Theorem~\ref{thm:projB} for all cases of the harmonic oscillator. 

\begin{corollary}[Projection onto \boldmath{${\cal B}$}~\cite{BausBuraKaya2019}]\label{cor:projB}
The projection $P_{{\cal B}}$ of $u^-\in {\cal L}^2([0,t_f];\dR)$ onto the constraint set ${\cal B}$, as the solution of Problem~{\em(P2)},  is given by
\begin{equation}  \label{u_proj_B}
P_{{\cal B}}(u^-)(t) = \left\{\begin{array}{rl}
a\,, &\ \ \mbox{if\ \ } u^-(t)\ge a\,, \\[1mm]
u^-(t)\,, &\ \ \mbox{if\ \ } -a\le u^-(t)\le a\,, \\[1mm]
-a\,, &\ \ \mbox{if\ \ } u^-(t)\le -a\,, \\[1mm]
\end{array} \right.
\end{equation}
for all $t\in[0,t_f]$. 
\end{corollary}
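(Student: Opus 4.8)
The plan is to derive this corollary as an immediate specialization of Theorem~\ref{thm:projB} to the scalar case $m = 1$. In the setting of Subsection~\ref{subsec:proj_2var} there is a single control variable, so the box ${\cal B}$ in~\eqref{B} is exactly the $m=1$ instance of~\eqref{B_gen} with $a_1 := a$, and Problem~(P2) posed over this ${\cal B}$ coincides with Problem~(P2) of Theorem~\ref{thm:projB}. Thus~\eqref{eqn:projB} applies, and reading off its single component $[P_{{\cal B}}(u^-)(t)]_1 = P_{{\cal B}}(u^-)(t)$ gives precisely~\eqref{u_proj_B}. Since ${\cal B}$ does not depend on $\omega_0$ or $\zeta$, the same formula serves all of the pure, under-, critically- and over-damped harmonic oscillator cases, which is what the corollary asserts.

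For a self-contained argument one could instead verify the variational characterization of the projection recalled before Problem~(P1). Let $\ub$ be the function defined pointwise by the right-hand side of~\eqref{u_proj_B}, i.e.\ the truncation of $u^-(t)$ to $[-a,a]$. First I would check $\ub\in{\cal B}$: it is measurable, being a continuous truncation composed with the measurable $u^-$, and $|\ub(t)|\le\min(|u^-(t)|,a)\le|u^-(t)|$ shows both $\ub\in{\cal L}^2([0,t_f];\dR)$ and that the box constraint holds. Then, for any $y\in{\cal B}$, I would show $\langle y-\ub\,|\,u^--\ub\rangle_{{\cal L}^2}=\int_{t_0}^{t_f}(y(t)-\ub(t))(u^-(t)-\ub(t))\,dt\le 0$ by checking that the integrand is nonpositive almost everywhere: on $\{u^-(t)>a\}$ one has $\ub(t)=a$, so $u^-(t)-\ub(t)>0$ while $y(t)-a\le 0$; on $\{u^-(t)<-a\}$ the signs are reversed; and on $\{|u^-(t)|\le a\}$ the factor $u^-(t)-\ub(t)$ vanishes. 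This three-regime pointwise sign analysis is the only content, and it is routine.

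There is essentially no obstacle here. Strong convexity of $\|\cdot-u^-\|_{{\cal L}^2}^2$ together with closedness and convexity of ${\cal B}$ already guarantee that the projection exists and is unique, and the separable, indeed pointwise, structure of both the objective and the constraint reduces the problem to the elementary one-dimensional minimization $\min_{|v|\le a}(v-u^-(t))^2$, whose minimizer is the clamp. The only step that deserves a sentence is the measurability/integrability check above, which certifies that these pointwise minimizers assemble into a genuine element of ${\cal L}^2([0,t_f];\dR)$.
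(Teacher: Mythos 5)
Your primary argument — specializing Theorem~\ref{thm:projB} to $m=1$ with $a_1:=a$ and noting that ${\cal B}$ is independent of $\omega_0$ and $\zeta$ — is exactly how the paper obtains this corollary, which it states as a direct consequence of that theorem. The additional self-contained verification via the variational characterization of the projection is correct and a nice supplement, but not needed beyond the one-line specialization.
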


\subsubsection{Pure harmonic oscillator}\label{subsec:projPHO}
When a spring is added to the point mass, or an inductor to the electric circuit with a capacitor, one gets the {\em pure} (or undamped) harmonic oscillator, as without forcing, once excited the state variables will exhibit sustained oscillations (or sinusoids) at frequency $\omega_0$.  We extend Corollary~\ref{cor:projA_PDI} to the general case of projecting onto ${\cal A}_{\omega_0,0}$. In this case, from \eqref{eqn:matrices} one has
\begin{equation}  \label{A_PHO}
A = \begin{bmatrix}
0 & 1 \\ -\omega_0^2 & 0
\end{bmatrix}.
\end{equation}
The following lemma provides major ingredients for the projector in Theorem~\ref{thm:projA_gen}.
\begin{lemma}[Computation of $\Phi_A$ and $J_\varphi$ for $\omega_0>0,\,\zeta=0$]\label{lem:PHO}
One has that
\begin{equation} \label{eqn:PHO_Phi}
\Phi_A(t,0) = e^{At} = \begin{bmatrix}
\cos(\omega_0t) & \ds\frac{\sin(\omega_0t)}{\omega_0} \\ -\omega_0\sin(\omega_0t) & \cos(\omega_0t)
\end{bmatrix}, \quad
[J_\varphi(0)]^{-1} = \begin{bmatrix}
-\omega_0^2/\pi & 0 \\ 0 & -1/\pi
\end{bmatrix}.
\end{equation}
\end{lemma}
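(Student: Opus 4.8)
The statement packages two independent computations: the state transition matrix $\Phi_A(t,0)$, and the Jacobian $J_\varphi(0)$ of the near-miss function (equivalently $\partial y(t_f)/\partial\lambda_0$ of Theorem~\ref{thm:projA_gen}). For the first, the plan is to exploit that the matrix $A$ in~\eqref{A_PHO} satisfies $A^2=-\omega_0^2\,\Id$. Splitting the power series of $e^{At}$ into even and odd powers then collapses it to $e^{At}=\cos(\omega_0 t)\,\Id+\omega_0^{-1}\sin(\omega_0 t)\,A$, and reading off the four entries yields precisely the asserted $\Phi_A(t,0)$; since $A$ is constant, $\Phi_A(t,0)=e^{At}$ as required. (Equivalently one integrates $\dot x_1=x_2$, $\dot x_2=-\omega_0^2 x_1$, i.e.\ $\ddot x_1+\omega_0^2 x_1=0$, under the two canonical initial conditions, whose solution pairs $(x_1,x_2)$ are the columns of $\Phi_A(\cdot,0)$.)

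For the Jacobian I would invoke the variational-equation construction of the Remark following Theorem~\ref{thm:projA_gen}. With $\widetilde A=\mathrm{diag}(A,A)$, Lemma~\ref{lem:A_tilde} gives $\Phi_{\widetilde A}(t_f,\tau)=\mathrm{diag}\!\big(e^{A(t_f-\tau)},\,e^{A(t_f-\tau)}\big)$, and with $\widetilde B$ as in that Remark the identity $\widetilde y(t_f)=\int_0^{t_f}\Phi_{\widetilde A}(t_f,\tau)\,\widetilde B(\tau)\,d\tau$ shows that column $i$ of $J_\varphi(0)$ equals $-\int_0^{t_f}\Phi_A(t_f,\tau)\,b\,b^{T}\,\Phi_A(0,\tau)^{T}e_i\,d\tau$, so that $J_\varphi(0)=-\int_0^{t_f}\Phi_A(t_f,\tau)\,b\,b^{T}\,\Phi_A(0,\tau)^{T}\,d\tau$. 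Substituting the transition matrix just found, and using $b=(0,1)^{T}$, the vector $\Phi_A(t_f,\tau)\,b$ is the second column of $e^{A(t_f-\tau)}$, namely $\big(\omega_0^{-1}\sin(\omega_0(t_f-\tau)),\ \cos(\omega_0(t_f-\tau))\big)^{T}$, while $b^{T}\Phi_A(0,\tau)^{T}=(e^{-A\tau}b)^{T}=\big(-\omega_0^{-1}\sin(\omega_0\tau),\ \cos(\omega_0\tau)\big)$. I would then expand this rank-one integrand and integrate its four entries over $[0,t_f]$ using the elementary product-to-sum identities. The one thing that matters is that the terminal time is fixed so the oscillator completes a whole number of periods (for these examples $t_f=2\pi$ with integer $\omega_0$, so that $\sin(\omega_0 t_f)=0$, $\cos(\omega_0 t_f)=1$, and $e^{At_f}=\Id$); this annihilates both off-diagonal integrals and collapses each diagonal one, leaving $J_\varphi(0)=-\tfrac{t_f}{2}\,\mathrm{diag}(\omega_0^{-2},1)=-\pi\,\mathrm{diag}(\omega_0^{-2},1)$. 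Inverting this diagonal matrix gives $[J_\varphi(0)]^{-1}=-\pi^{-1}\,\mathrm{diag}(\omega_0^{2},1)$, the asserted expression.

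I expect the only real work to be bookkeeping: the trigonometric integration is routine but it is easy to mislay a sign or a stray factor of $\omega_0$, and one must keep the block structure of $\widetilde A$ and $\widetilde B$ straight when reassembling the columns of $J_\varphi(0)$. The single genuine observation is that the special terminal time — an integer number of periods, so $e^{At_f}=\Id$ and the pure-sine integrals over $[0,t_f]$ vanish — is exactly what makes $J_\varphi(0)$ diagonal and hands over a closed-form inverse; for a generic $t_f$ one would instead have to invert a full $2\times2$ matrix with mixed sine and cosine entries, which is the situation encountered in the damped cases handled next.
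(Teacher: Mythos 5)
Your proposal is correct and takes essentially the same route as the paper: the transition matrix is obtained directly from the constant $A$, and $J_\varphi(0)$ is assembled column-by-column via the variational-equation formula \eqref{eqn:y} from the Remark, which is exactly your rank-one integral $-\int_0^{2\pi}\Phi_A(2\pi,\tau)\,b\,b^{T}\Phi_A(0,\tau)^{T}\,d\tau$. Your explicit observation that the diagonal form of $J_\varphi(0)$ hinges on $[0,2\pi]$ being a whole number of periods (i.e.\ integer $\omega_0$, so $\sin(2\pi\omega_0)=0$ and $\cos(2\pi\omega_0)=1$) is left implicit in the paper's proof but is indeed required for \eqref{eqn:PHO_Phi} as stated.
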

\begin{proof}
See \hyperlink{lem:PHO1_proof}{proof} in \ref{sec:proofs}.
\end{proof}

The following corollary is a direct consequence of Theorem~\ref{thm:projA_gen}.
\begin{corollary}
[Projection onto ${\cal A}_{\omega_0,0}$]  \label{cor:projA_PHO}
The projection $P_{{\cal A}_{\omega_0,0}}$ of $u^-\in {\cal L}^2([0,2\pi];\dR)$ onto the constraint set ${\cal A}_{\omega_0,0}$, as the solution of Problem~{\em(P1)}, $\zeta=0$, is given by
\begin{equation}  \label{eqn:u_PHO}
P_{{\cal A}_{\omega_0,0}}(u^-)(t) = u^-(t)+c_1\sin(\omega_0t) - c_2\cos(\omega_0t),
\end{equation}
where
\[
\begin{array}{l}
  c_1:=\displaystyle \frac{\omega_0}{\pi}\bigg(s_0-s_f-\frac{1}{\omega_0}\int_0^{2\pi}\sin(\omega_0\tau)u^-(\tau)\,d\tau\bigg)   ,  \\
     \\
    c_2:=\displaystyle \frac{1}{\pi}\bigg(v_0-v_f+\int_0^{2\pi}\cos(\omega_0\tau)u^-(\tau)\,d\tau\bigg).
\end{array}
\]

\end{corollary}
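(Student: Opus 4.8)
The plan is to derive Corollary~\ref{cor:projA_PHO} by direct substitution of the ingredients from Lemma~\ref{lem:PHO} into the closed-form projector expression~\eqref{eqn:projA_spec}. Since Lemma~\ref{lem:PHO} already supplies both $\Phi_A(t,0)=e^{At}$ and $[J_\varphi(0)]^{-1}$ for the case $\omega_0>0,\ \zeta=0$ on the interval $[0,2\pi]$, the corollary is then a matter of assembling these pieces and simplifying. First I would record, from Lemma~\ref{lem:PHO} and the cocycle/semigroup property $\Phi_A(t_0,t)=\Phi_A(t,t_0)^{-1}$, the explicit forms of $\Phi_A(t_0,t)^T$ and $\Phi_A(t_f,t_0)$ with $t_0=0$, $t_f=2\pi$; note that $\Phi_A(0,t)=e^{-At}$, which is obtained from $e^{At}$ by replacing $t$ with $-t$, i.e.\ by flipping the sign of the sine terms.

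Next I would compute the three constituent terms appearing inside~\eqref{eqn:projA_spec}. The term $B(t)^T\Phi_A(t_0,t)^T$: since $B(t)=b=(0,1)^T$, this selects the second column of $\Phi_A(0,t)^T$, equivalently the second row of $\Phi_A(0,t)=e^{-At}$, which is $\bigl(\omega_0\sin(\omega_0 t),\ \cos(\omega_0 t)\bigr)$. The term $\Phi_A(t_f,t_0)x_0 = e^{2\pi A}x_0$: at $t=2\pi$ we have $\cos(\omega_0\cdot 2\pi)$ and $\sin(\omega_0\cdot 2\pi)$ which in general are \emph{not} $1$ and $0$ unless $\omega_0$ is an integer — here I would be careful, as the statement does not assume integer $\omega_0$; I expect the $1/\pi$ factor in $[J_\varphi(0)]^{-1}$ to come from $\int_0^{2\pi}\sin^2$ or $\int_0^{2\pi}\cos^2$ type integrals evaluated over the fixed horizon $2\pi$, so the derivation of $[J_\varphi(0)]^{-1}$ itself (done in the appendix) is where the horizon length enters, and here I simply use it. The term $\int_0^{2\pi}\Phi_A(2\pi,\tau)b\,u^-(\tau)\,d\tau$: since $\Phi_A(2\pi,\tau)=\Phi_A(2\pi,0)\Phi_A(0,\tau)$ and $B(t)^T\Phi_A(t_0,t)^T[J_\varphi(0)]^{-1}$ will be multiplied against this, I would instead combine using the group property so that the $\Phi_A(2\pi,0)$ factors cancel against a matching factor, leaving $\Phi_A(0,\tau)b = \bigl(\sin(\omega_0\tau)/\omega_0,\ \cos(\omega_0\tau)\bigr)^T$ inside the integral.

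Then I would multiply out $B(t)^T\Phi_A(0,t)^T[J_\varphi(0)]^{-1}\bigl(y(t_f)-x_f\bigr)$: because $[J_\varphi(0)]^{-1}$ is diagonal with entries $-\omega_0^2/\pi$ and $-1/\pi$, the product $\bigl(\omega_0\sin(\omega_0 t),\cos(\omega_0 t)\bigr)\,[J_\varphi(0)]^{-1}$ equals $\bigl(-(\omega_0^3/\pi)\sin(\omega_0 t),\ -(1/\pi)\cos(\omega_0 t)\bigr)$, and pairing this against the two components of $y(t_f)-x_f$ produces precisely the combination $c_1\sin(\omega_0 t)-c_2\cos(\omega_0 t)$ once I identify $c_1$ with the (appropriately signed) first component of $[J_\varphi(0)]^{-1}(y(t_f)-x_f)$ scaled by the sign in~\eqref{eqn:projA_spec}, and $c_2$ with the second; the internal $1/\omega_0$ in the integral term of $c_1$ matches the $\sin(\omega_0\tau)/\omega_0$ entry of $\Phi_A(0,\tau)b$, and the overall $\omega_0/\pi$ prefactor of $c_1$ comes from $\omega_0\cdot(\omega_0^2/\pi)\cdot(1/\omega_0^2)$ after the cancellations. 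Finally I would read off from $y(t_f)-x_f$ that its first component is $x_1(2\pi)-s_f$ and its second is $x_2(2\pi)-v_f$, expand $x(2\pi)=\Phi_A(2\pi,0)x_0+\int_0^{2\pi}\Phi_A(2\pi,\tau)b\,u^-(\tau)d\tau$, and check that after multiplication by $[J_\varphi(0)]^{-1}$ and the cancellation of $\Phi_A(2\pi,0)$ the surviving terms collapse to exactly the stated $s_0-s_f-\tfrac1{\omega_0}\int_0^{2\pi}\sin(\omega_0\tau)u^-(\tau)d\tau$ and $v_0-v_f+\int_0^{2\pi}\cos(\omega_0\tau)u^-(\tau)d\tau$. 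The main obstacle I anticipate is bookkeeping the cancellation of the $\Phi_A(2\pi,0)$ (equivalently $e^{2\pi A}$) factors correctly when $\omega_0$ is not an integer: one must use the group identity $\Phi_A(0,t)^T[J_\varphi(0)]^{-1}\Phi_A(2\pi,0)$ and verify that the structure of $[J_\varphi(0)]^{-1}$ (which, as derived in the appendix, already absorbs the correct boundary data) makes the cross terms vanish, so that no spurious $\cos(2\pi\omega_0)$ or $\sin(2\pi\omega_0)$ terms remain in the final formula; tracking the signs through the $-x_f$ in~\eqref{eqn:projA_spec} versus the $+$ in front of $B(t)^T\Phi_A^T$ is the other place where a slip is easy. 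Everything else is routine trigonometric simplification.
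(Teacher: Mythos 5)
Your overall route is the same as the paper's: substitute $\Phi_A(t,0)$ and $[J_\varphi(0)]^{-1}$ from Lemma~\ref{lem:PHO} into \eqref{eqn:projA_spec} and simplify. Two steps of your sketch, however, would not survive the actual computation. First, the matrix bookkeeping: since $B=(0,1)^T$, the row vector $B^T\Phi_A(0,t)^T=(\Phi_A(0,t)B)^T$ is the second \emph{column} of $\Phi_A(0,t)=e^{-At}$ written as a row, namely $\bigl(-\sin(\omega_0 t)/\omega_0,\ \cos(\omega_0 t)\bigr)$, not the second row $\bigl(\omega_0\sin(\omega_0 t),\ \cos(\omega_0 t)\bigr)$ that you use. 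With your vector, multiplication by $[J_\varphi(0)]^{-1}=\mathrm{diag}(-\omega_0^2/\pi,\,-1/\pi)$ gives a first coefficient $-(\omega_0^3/\pi)\sin(\omega_0 t)$, and since the first component of $y(2\pi)-x_f$ is $s_0-s_f-\tfrac{1}{\omega_0}\int_0^{2\pi}\sin(\omega_0\tau)u^-(\tau)\,d\tau$ (there is no $1/\omega_0^2$ anywhere to pair with), your claimed recombination $\omega_0\cdot(\omega_0^2/\pi)\cdot(1/\omega_0^2)$ has nothing to cancel against: you would land on a $c_1$ with the wrong sign and an extra factor $\omega_0^2$. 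The correct product $\bigl(-\sin(\omega_0 t)/\omega_0,\ \cos(\omega_0 t)\bigr)[J_\varphi(0)]^{-1}=\bigl(\tfrac{\omega_0}{\pi}\sin(\omega_0 t),\ -\tfrac{1}{\pi}\cos(\omega_0 t)\bigr)$ delivers \eqref{eqn:u_PHO} at once, and this is exactly the paper's calculation.

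Second, the non-integer $\omega_0$ issue you flag is real, but the cure you propose does not exist: with the diagonal $[J_\varphi(0)]^{-1}$ of Lemma~\ref{lem:PHO} there is no cancellation of the $\cos(2\pi\omega_0)$ and $\sin(2\pi\omega_0)$ terms coming from $\Phi_A(2\pi,0)x_0$ and $\Phi_A(2\pi,\tau)$. Indeed $J_\varphi(0)=-\Phi_A(2\pi,0)\int_0^{2\pi}\Phi_A(0,\tau)BB^T\Phi_A(0,\tau)^T\,d\tau$, and the value $J_\varphi(0)=\mathrm{diag}(-\pi/\omega_0^2,\,-\pi)$, together with the simplifications $\Phi_A(2\pi,0)=I$, $\cos(\omega_0(2\pi-\tau))=\cos(\omega_0\tau)$ and $\sin(\omega_0(2\pi-\tau))=-\sin(\omega_0\tau)$ that the paper's appendix proof uses silently, all require $\omega_0$ to be a positive integer; otherwise the Gramian integrals are not $\pi$ and $0$, residual $\sin(2\pi\omega_0)$ and $\cos(2\pi\omega_0)$ terms survive, and the stated $c_1,c_2$ do not give the projection. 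So the honest way to complete your step is to record that Lemma~\ref{lem:PHO} and Corollary~\ref{cor:projA_PHO} carry the implicit assumption that the horizon $2\pi$ is an integer multiple of the period $2\pi/\omega_0$ (as in the paper's experiments with $\omega_0=1,5$), under which the substitution is routine -- not to expect a hidden cancellation to rescue the general case.
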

\begin{proof}
See \hyperlink{cor:PHO1_proof}{proof} in \ref{sec:proofs}.
\end{proof}

\subsubsection{Damped harmonic oscillator}\label{subsec:projPDHO}
If a damper (which is an element that dissipates energy) is added to the mass-spring system (or analogously a resistor added to a capacitor--inductor electrical circuit) one gets what is referred to as a {\em damped harmonic oscillator}. There are three cases to consider for a damped system, namely the {\em critically-} ($\zeta = 1$), {\em over-} ($\zeta > 1$) and {\em under-damped} ($0 < \zeta < 1$) cases. We provide the projectors for each case.

Corollary~\ref{cor:projA_PDHO1} below presents the projector onto the set ${\cal A}_{\omega_0,1}$ for the critically-damped case.  From \eqref{eqn:matrices}, the system matrix $A$ for this case is
\[A=\begin{bmatrix}
0 & 1 \\ -\omega_0^2 & -2\omega_0
\end{bmatrix}.\]
\begin{lemma}[Computation of $\Phi_A$ and $J_{\varphi}$ for $\omega_0>0,\,\zeta=1$]\label{lem:PDHO_crit}
One has that
\begin{align}\label{eqn:PDHO1_Phi}
\Phi_A(t,0) = e^{At} = e^{-\omega_0t}\begin{bmatrix}
\omega_0t+1 & t \\ -t\omega_0^2 & -\omega_0t+1
\end{bmatrix}
\end{align}
and
\begin{equation}\label{eqn:Jphi_PDHO}
[J_\varphi(0)]^{-1} = \dfrac{1}{y_{11}(2\pi)y_{22}(2\pi)-y_{12}(2\pi)y_{21}(2\pi)}\begin{bmatrix}
y_{22}(2\pi) & -y_{21}(2\pi) \\ -y_{12}(2\pi) & y_{11}(2\pi)
\end{bmatrix}
\end{equation}
where $y_{ij}(2\pi)$, $j = 1,2$, are the components of the vectors $y_i(2\pi)$, $i = 1,2$, given below.
\begin{equation}\label{eqn:y_PDHO1}
y(2\pi) = \left[\begin{array}{c}
y_1(2\pi) \\[1mm] \hdashline[3pt/3pt] \\[-4mm]
y_2(2\pi)
\end{array}\right] = 
\left[\begin{array}{c}
\dfrac{e^{-2\pi\omega_0}(2\pi\omega_0-e^{4\pi\omega_0}+2\pi\omega_0e^{4\pi\omega_0}+1)}{4\omega_0^3} \\[4mm] \dfrac{\pi\sinh(2\pi\omega_0)}{\omega_0} \\[4mm] \hdashline[3pt/3pt] \\[-2mm]
-\dfrac{\pi\sinh(2\pi\omega_0)}{\omega_0} \\[4mm] -\dfrac{e^{-2\pi\omega_0}(2\pi\omega_0+e^{4\pi\omega_0}+2\pi\omega_0e^{4\pi\omega_0}-1)}{4\omega_0}
\end{array}\right].
\end{equation}
\end{lemma}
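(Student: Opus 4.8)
The plan is to treat the two displayed identities in turn: first compute the matrix exponential $e^{At}$ by a Jordan-type argument, then feed it into the variational-equation construction of the Remark following Theorem~\ref{thm:projA_gen} to obtain $J_\varphi(0)$, and finally read off its inverse by the adjugate formula for $2\times2$ matrices.

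For \eqref{eqn:PDHO1_Phi}, the key observation is that the characteristic polynomial of $A$ is $(\lambda+\omega_0)^2$, so $N:=A+\omega_0 I$ is nilpotent with $N^2=0$ (a one-line $2\times2$ check). Since $-\omega_0 I$ commutes with $N$, one has $e^{At}=e^{-\omega_0 t}e^{Nt}=e^{-\omega_0 t}(I+Nt)$; substituting $N=\begin{bmatrix}\omega_0 & 1\\ -\omega_0^2 & -\omega_0\end{bmatrix}$ and using $\Phi_A(t,0)=e^{At}$ (valid since $A$ is constant) gives \eqref{eqn:PDHO1_Phi}. I will also record, for use below, that $\Phi_A(s,\tau)=e^{A(s-\tau)}$ and hence $\Phi_A(0,\tau)^T=e^{-A^T\tau}=e^{\omega_0\tau}(I-N^T\tau)$.

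For \eqref{eqn:Jphi_PDHO}, I would use the description of $J_\varphi(0)=\partial y(t_f)/\partial\lambda_0$ from the Remark following Theorem~\ref{thm:projA_gen} (equivalently, differentiate the variation-of-constants representation \eqref{x_soln} of the solution of the IVP \eqref{eqn:IVP-1} with respect to $\lambda_0$): the $i$-th column of $J_\varphi(0)$ is
\[
y_i(2\pi)=-\int_0^{2\pi}\Phi_A(2\pi,\tau)\,bb^T\,\Phi_A(0,\tau)^T e_i\,d\tau,\qquad i=1,2,
\]
which is precisely the block relation \eqref{eqn:y} obtained via Lemma~\ref{lem:A_tilde}. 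Since $b=(0,1)^T$, the rank-one factor $bb^T$ keeps only the second row and column, so each scalar integrand equals $e^{-\omega_0(2\pi-\tau)}e^{\omega_0\tau}=e^{-2\pi\omega_0}e^{2\omega_0\tau}$ times a polynomial in $\tau$ of degree at most two. Evaluating the elementary integrals $\int_0^{2\pi}\tau^k e^{2\omega_0\tau}\,d\tau$ for $k=0,1,2$ and collecting terms produces exactly the four entries $y_{ij}(2\pi)$ listed in \eqref{eqn:y_PDHO1} (the hyperbolic sines there arise from combinations of the form $e^{-2\pi\omega_0}(e^{4\pi\omega_0}-1)$). Finally, assembling $J_\varphi(0)=[\,y_1(2\pi)\ \ y_2(2\pi)\,]$ as a $2\times2$ matrix and applying the adjugate formula for the inverse produces \eqref{eqn:Jphi_PDHO}; the determinant $y_{11}(2\pi)y_{22}(2\pi)-y_{12}(2\pi)y_{21}(2\pi)$ is nonzero since the system is controllable, so the Gramian-type matrix $\int_0^{2\pi}\Phi_A(2\pi,\tau)bb^T\Phi_A(0,\tau)^T\,d\tau$ is invertible.

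The only genuine obstacle is the explicit integration in the second part — evaluating $\int_0^{2\pi}\tau^k e^{2\omega_0\tau}\,d\tau$ for $k=0,1,2$ and massaging the resulting exponential expressions into the compact closed forms of \eqref{eqn:y_PDHO1}; the nilpotent-decomposition step for $e^{At}$ and the $2\times2$ inversion are routine.
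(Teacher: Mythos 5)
Your proposal is correct and follows essentially the same route as the paper: compute $e^{At}$ (your nilpotent splitting $A=-\omega_0 I+N$, $N^2=0$ just makes explicit what the paper calls straightforward), then obtain the columns of $J_\varphi(0)$ from the variational-equation/block formula \eqref{eqn:y} with $\widetilde{B}(\tau)=-bb^T\Phi_A(0,\tau)^T[e_1;e_2]$, integrate the resulting $e^{2\omega_0\tau}$-times-polynomial expressions over $[0,2\pi]$, and invert the $2\times2$ matrix by the adjugate formula. Your added controllability/Gramian remark justifying invertibility of $J_\varphi(0)$ (via $J_\varphi(0)=-W\,\Phi_A(0,2\pi)^T$ with $W$ the controllability Gramian) is a small bonus the paper leaves implicit.
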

\begin{proof}
See \hyperlink{lem:PDHO1_proof}{proof} in \ref{sec:proofs}.
\end{proof}

\begin{corollary}[Projection onto ${\cal A}_{\omega_0,1}$]  \label{cor:projA_PDHO1}
The projection $P_{{\cal A}_{\omega_0,\zeta}}$ of $u^-\in {\cal L}^2([0,2\pi];\dR)$ onto the constraint set ${\cal A}_{\omega_0,1}$, as the solution of Problem~{\em(P1)} with $\zeta=1$, is given by
\begin{align}\label{eqn:u_PDHO1}
P_{{\cal A}_{\omega_0,1}}(u^-)(t) = u^-(t)&+\frac{e^{\omega_0(t-2\pi)}}{y_{11}y_{22}-y_{12}y_{21}}\left(-((y_{22}+y_{12}\omega_0)t+y_{12})\left(x_1(2\pi)-\frac{s_f}{e^{-2\pi\omega_0}}\right)\right.\nonumber\\ &+\left.((y_{21}+y_{11}\omega_0)t+y_{11})\left(x_2(2\pi)-\frac{v_f}{e^{-2\pi\omega_0}}\right)\right),
\end{align}
where $y_{ij}$ are the components of $y(2\pi)$ given in~\eqref{eqn:y_PDHO1}.
\end{corollary}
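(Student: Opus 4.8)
The plan is to obtain~\eqref{eqn:u_PDHO1} by direct substitution into the closed-form projector formula~\eqref{eqn:projA_spec} of Theorem~\ref{thm:projA_gen}, specialized to $n=2$, $m=1$, $t_0=0$, $t_f=2\pi$ and $B(t)\equiv b=(0,1)^T$, feeding in the two ingredients supplied by Lemma~\ref{lem:PDHO_crit}, namely $\Phi_A(t,0)=e^{At}$ from~\eqref{eqn:PDHO1_Phi} and $[J_\varphi(0)]^{-1}$ from~\eqref{eqn:Jphi_PDHO}.

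First I would record $\Phi_A(0,t)$, since it is this quantity (transposed) that actually appears in~\eqref{eqn:projA_spec}. Because the system is time-invariant, $\Phi_A(0,t)=\Phi_A(t,0)^{-1}=e^{-At}$, and inverting the $2\times2$ matrix in~\eqref{eqn:PDHO1_Phi} (note that $\det\Phi_A(t,0)=e^{-2\omega_0 t}$) gives $\Phi_A(0,t)=e^{\omega_0 t}\begin{bmatrix}1-\omega_0 t & -t\\ \omega_0^2 t & 1+\omega_0 t\end{bmatrix}$. Hence $b^T\Phi_A(0,t)^T=\bigl(\Phi_A(0,t)\,b\bigr)^T=e^{\omega_0 t}\,(-t,\ 1+\omega_0 t)$.

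Next I would carry out the two remaining matrix products in~\eqref{eqn:projA_spec}. Multiplying the row vector just obtained on the right by $[J_\varphi(0)]^{-1}$ from~\eqref{eqn:Jphi_PDHO} yields the $1\times 2$ row
\[
\frac{e^{\omega_0 t}}{y_{11}y_{22}-y_{12}y_{21}}\bigl(-\bigl((y_{22}+\omega_0 y_{12})\,t+y_{12}\bigr),\ (y_{21}+\omega_0 y_{11})\,t+y_{11}\bigr),
\]
whose bracketed entries are exactly the $t$-affine coefficients appearing in~\eqref{eqn:u_PDHO1}. It then remains to pair this row with the terminal-mismatch vector $\Phi_A(2\pi,0)\,x_0+\int_0^{2\pi}\Phi_A(2\pi,\tau)\,b\,u^-(\tau)\,d\tau-x_f$ from~\eqref{eqn:projA_spec}: writing its two components as $x_1(2\pi)-s_f$ and $x_2(2\pi)-v_f$ and regrouping the scalar prefactor via $e^{\omega_0 t}=e^{\omega_0(t-2\pi)}/e^{-2\pi\omega_0}$, so as to match the exponential normalization of $x_1(2\pi),x_2(2\pi)$ used in Lemma~\ref{lem:PDHO_crit}, reproduces~\eqref{eqn:u_PDHO1}.

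There is no conceptual obstacle here: Theorem~\ref{thm:projA_gen} together with Lemma~\ref{lem:PDHO_crit} already carries the substantive content, and what remains is bookkeeping. The part I expect to be most delicate is the passage from $\Phi_A(t,0)$ to $\Phi_A(0,t)^T$ — keeping the transpose and all signs consistent — together with fixing the exponential normalization of $x_1(2\pi)$ and $x_2(2\pi)$ (and hence the appearance of the $e^{-2\pi\omega_0}$ factors next to $s_f$ and $v_f$) so that the resulting expression matches the one displayed in the statement verbatim.
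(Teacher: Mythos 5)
Your proposal is correct and takes essentially the same route as the paper: direct substitution of $\Phi_A(0,t)^T$ and $[J_\varphi(0)]^{-1}$ from Lemma~\ref{lem:PDHO_crit} into the specialized formula \eqref{eqn:projA_spec}, with the paper merely writing out $x(2\pi)$ explicitly before performing the same regrouping of exponential factors. The normalization subtlety you flag (whether $x_i(2\pi)$ carries the $e^{-2\pi\omega_0}$ factor, and hence how the $s_f/e^{-2\pi\omega_0}$, $v_f/e^{-2\pi\omega_0}$ terms arise) is precisely the bookkeeping the paper disposes of with ``expansions and further manipulations,'' and you handle it the same way.
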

\begin{proof}
See \hyperlink{cor:PDHO1_proof}{proof} in \ref{sec:proofs}.
\end{proof}

In Corollary~\ref{cor:projA_PDHO2} we consider the derivation of the projection onto the set ${\cal A}_{\omega_0,\zeta}$ from \eqref{A} where $\zeta>1$.
\begin{lemma}[Computation of $\Phi_A$ and $J_\varphi$ for $\omega_0>0,\zeta>1$]\label{lem:PDHO_2}
One has that 
\begin{equation}  \label{eqn:PDHO2_Phi}
\Phi_A(t,0) = e^{At} = \frac{e^{-\alpha t}}{\beta}\begin{bmatrix}
\omega_0\sinh(\beta t+\eta) & \sinh(\beta t) \\ -\omega_0^2\sinh(\beta t) & \omega_0\sinh(-\beta t+\eta)
\end{bmatrix}
\end{equation}
with $\alpha=\omega_0\zeta$, \ $\beta=\omega_0\sqrt{\zeta^2-1}, \ \eta = \frac{1}{2}\ln\left|\frac{\beta+\alpha}{\beta-\alpha}\right|$.  Then we express the inverse of the Jacobian as in \eqref{eqn:Jphi_PDHO} where $y_{ij}(2\pi)$, $j = 1,2$, are the components of the vectors $y_i(2\pi)$, $i = 1,2$, given below.
\begin{equation}\label{eqn:y_PDHO2}
y(2\pi) = \left[\begin{array}{c}
y_1(2\pi) \\[1mm] \hdashline[3pt/3pt] \\[-4mm]
y_2(2\pi)
\end{array}\right] = 
\left[\begin{array}{c}
\dfrac{e^{-2\pi\alpha}}{4\omega_0^2}\left(\dfrac{(1-e^{4\pi\alpha})\cosh(2\pi\beta)}{\alpha}+\dfrac{(1+e^{4\pi\alpha})\sinh(2\pi\beta)}{\beta}\right) \\[4mm]
\dfrac{\sinh(2\pi\alpha)\sinh(2\pi\beta)}{2\alpha\beta} \\[4mm] \hdashline[3pt/3pt] \\[-2mm]
-\dfrac{\sinh(2\pi\alpha)\sinh(2\pi\beta)}{2\alpha\beta} \\[4mm]
\dfrac{e^{-2\pi\alpha}}{4}\left(\dfrac{(1-e^{4\pi\alpha})\cosh(2\pi\beta)}{\alpha}+\dfrac{(1+e^{4\pi\alpha})\sinh(2\pi\beta)}{\beta}\right)
\end{array}\right].
\end{equation}
\end{lemma}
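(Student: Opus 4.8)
The plan is to follow the same pattern as for the pure and critically-damped cases: first compute the matrix exponential $e^{At}$ in closed form, then build the auxiliary functions $y_i(\cdot)$ from the variational equations in the Remark following Theorem~\ref{thm:projA_gen}, evaluate them at $t=2\pi$, and finally read off the inverse of the Jacobian via the explicit $2\times 2$ inverse formula~\eqref{eqn:Jphi_PDHO}.

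First I would compute $\Phi_A(t,0)=e^{At}$. Since $A$ is a constant $2\times 2$ matrix with characteristic polynomial $s^2+2\zeta\omega_0 s+\omega_0^2$, in the over-damped regime $\zeta>1$ the eigenvalues $-\alpha\pm\beta$ are real and distinct, with $\alpha=\omega_0\zeta$ and $\beta=\omega_0\sqrt{\zeta^2-1}$. The cleanest route is to write $e^{At}=e^{-\alpha t}e^{(A+\alpha I)t}$, observe that $N:=A+\alpha I$ has eigenvalues $\pm\beta$, and use $e^{Nt}=\cosh(\beta t)\,I+\frac{\sinh(\beta t)}{\beta}\,N$ (valid since $N^2=\beta^2 I$, which one checks directly from the entries of $A$). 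Substituting $N=\begin{bmatrix}\alpha & 1\\ -\omega_0^2 & -\alpha\end{bmatrix}$ and simplifying the $(1,1)$ and $(2,2)$ entries using the identity $\alpha\sinh(\beta t)+\beta\cosh(\beta t)=\sqrt{\beta^2-\alpha^2}\cdot(\cdots)$ — or more transparently, recognising $\omega_0\sinh(\beta t+\eta)=\omega_0(\cosh\eta\sinh\beta t+\sinh\eta\cosh\beta t)$ with $\cosh\eta,\sinh\eta$ chosen so that $\omega_0\cosh\eta=\beta$ and $\omega_0\sinh\eta$ matches — yields the stated form~\eqref{eqn:PDHO2_Phi}; the definition $\eta=\frac12\ln\bigl|\frac{\beta+\alpha}{\beta-\alpha}\bigr|$ is exactly what makes $\tanh\eta=\alpha/\beta$, so this bookkeeping is routine once the hyperbolic addition formula is invoked.

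Next, following the Remark, the components of $\partial y(t_f)/\partial\lambda_0 = J_\varphi(0)$ are obtained by solving $\dot{\widetilde y}=\widetilde A(t)\widetilde y+\widetilde B(t)$, $\widetilde y(t_0)=0$, with $\widetilde B(t)=-B(t)B(t)^T\Phi_A(t_0,t)^T[e_1\ e_2]^T$; here $B=b=(0,1)^T$, so $bb^T$ has a single nonzero entry, and $\Phi_A(0,t)=\Phi_A(t,0)^{-1}$ is computed by negating $t$ (equivalently $\beta\mapsto-\beta$) in~\eqref{eqn:PDHO2_Phi}. By Lemma~\ref{lem:A_tilde} the solution is $\widetilde y(t)=\int_{t_0}^t \Phi_{\widetilde A}(t,\tau)\widetilde B(\tau)\,d\tau$, which decouples into two copies of $\int_0^t \Phi_A(t,\tau)\,(\text{column})\,d\tau$; each entry is an integral of products $e^{\pm\alpha\tau}\sinh(\beta\tau)$ (and $e^{\pm\alpha\tau}\cosh(\beta\tau)$), all elementary. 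Evaluating at $t=2\pi$ and collecting terms — using $\sinh$–$\cosh$ product-to-sum identities and the abbreviation $\sinh(2\pi\alpha)\sinh(2\pi\beta)/(2\alpha\beta)$ for the off-diagonal block — produces the vector~\eqref{eqn:y_PDHO2}. Plugging $y_{ij}(2\pi)$ into the adjugate formula~\eqref{eqn:Jphi_PDHO} finishes the proof.

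The main obstacle is purely computational: the bookkeeping in the integrals defining $y(2\pi)$, where one must carefully track the four combinations $e^{\pm\alpha\tau}\sinh(\beta\tau\pm\text{const})$, perform the antiderivatives, and then massage the boundary terms into the compact hyperbolic form stated. There is no conceptual difficulty — affineness of the near-miss function (Lemma~\ref{lem:shooting}(i)) guarantees the Jacobian is the constant matrix whose columns are $y_1(2\pi),y_2(2\pi)$, and invertibility is assured by controllability — but the algebra is lengthy, which is precisely why the paper defers it to~\ref{sec:proofs}. A useful sanity check is that letting $\zeta\to 1^+$ (so $\beta\to 0$, $\alpha\to\omega_0$, $\sinh(\beta t)/\beta\to t$) must recover Lemma~\ref{lem:PDHO_crit}, and this limit can be used to catch sign or factor errors in the final expressions.
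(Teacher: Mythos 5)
Your plan is correct and essentially the same as the paper's proof: compute $e^{At}$ in closed form, then follow the Remark's variational-equation route (the block system with $\widetilde{A}$ and $\widetilde{b}(t)=-B B^T\Phi_A(0,t)^T[e_1\ e_2]^T$, integrated via \eqref{eqn:y}) to obtain $y(2\pi)$, assemble $J_\varphi(0)=\left[\,y_1(2\pi)\ |\ y_2(2\pi)\,\right]$, and invert by the $2\times 2$ adjugate formula \eqref{eqn:Jphi_PDHO}. Only two bookkeeping slips to fix: the correct matching is $\omega_0\cosh\eta=\alpha$ and $\omega_0\sinh\eta=\beta$, i.e.\ $\tanh\eta=\beta/\alpha$ (not $\alpha/\beta$, which exceeds $1$ when $\zeta>1$), and $\Phi_A(0,t)=e^{-At}$ is obtained by $t\mapsto -t$, not by $\beta\mapsto-\beta$ (the matrix \eqref{eqn:PDHO2_Phi} is unchanged under $\beta\mapsto-\beta$ since $\eta$ flips sign with $\beta$).
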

\begin{proof}
See \hyperlink{lem:PDHO2_proof}{proof} in \ref{sec:proofs}.
\end{proof}

\begin{corollary}[Projection onto ${\cal A}_{\omega_0,\zeta}$ with $\zeta>1$]\label{cor:projA_PDHO2}
The projection $P_{{\cal A}_{\omega_0,\zeta}}$ of $u^-\in {\cal L}^2([0,2\pi];\dR)$ onto the constraint set ${\cal A}_{\omega_0,\zeta}$, as the solution of Problem~{\em(P1)} where $\zeta>1$, is given by
\begin{align}  \label{eqn:u_PDHO2}
P_{{\cal A}_{\omega_0,\zeta}}(u^-)&(t) = u^-(t)+\dfrac{e^{\alpha (t-2\pi)}}{\beta^2(y_{11}(2\pi)y_{22}(2\pi)-y_{12}(2\pi)y_{21}(2\pi))}\bigg(-(y_{22}(2\pi)\sinh(\beta t)+y_{12}(2\pi)\nonumber\\ & \ \ \ \ \ \ \ \times\omega_0\sinh(\beta t+\eta))\left(x_1(2\pi)-\dfrac{\beta s_f}{e^{-2\pi\alpha}}\right)\nonumber\\ &\ \ \ \ \ \ \ +(y_{21}(2\pi)\sinh(\beta t)+y_{11}(2\pi)\omega_0\sinh(\beta t+\eta))\left(x_2(2\pi)-\dfrac{\beta v_f}{e^{-2\pi\alpha}}\right)\!\bigg)
\end{align}
where $\alpha=\omega_0\zeta, \ \beta=\omega_0\sqrt{\zeta^2-1}$ and $y_{ij}$ are the components of $y(2\pi)$ given in \eqref{eqn:y_PDHO2}.
\end{corollary}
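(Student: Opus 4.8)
The plan is to obtain \eqref{eqn:u_PDHO2} by direct substitution of the ingredients computed in Lemma~\ref{lem:PDHO_2} into the closed-form projector expression \eqref{eqn:projA_spec}, exactly as was done in the previous two corollaries. Thus the structure of the proof is dictated in advance: the only genuinely new content is the bookkeeping of the over-damped transition matrix and its associated Jacobian.

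First I would record the pieces produced by Lemma~\ref{lem:PDHO_2}: the matrix exponential $\Phi_A(t,0)=e^{At}$ from \eqref{eqn:PDHO2_Phi}, hence $\Phi_A(t_0,t)^T=\Phi_A(0,t)^T=e^{-At}{}^{\,T}$, together with $\Phi_A(t_f,t_0)=\Phi_A(2\pi,0)$ and $\Phi_A(t_f,\tau)=\Phi_A(2\pi-\tau,0)$ (using the group property $\Phi_A(t,s)=\Phi_A(t-s,0)$ valid since $A$ is constant), and the inverse Jacobian $[J_\varphi(0)]^{-1}$ in the form \eqref{eqn:Jphi_PDHO} built from the components of $y(2\pi)$ in \eqref{eqn:y_PDHO2}. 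Next I would substitute these into \eqref{eqn:projA_spec}. Since here $B(t)=b=(0,1)^T$, the factor $B(t)^T\Phi_A(t_0,t)^T$ picks out the second row of $\Phi_A(0,t)^T$, equivalently the second column of $\Phi_A(0,t)=e^{-At}$; reading off \eqref{eqn:PDHO2_Phi} with $t\mapsto -t$ and simplifying $\sinh$ of negated arguments, this row is $\tfrac{e^{\alpha t}}{\beta}\bigl(-\sinh(\beta t),\ \omega_0\sinh(\beta t+\eta)\bigr)$ up to sign conventions, which is the origin of the prefactor $e^{\alpha(t-2\pi)}/\beta^2$ and the two $\sinh(\beta t)$, $\sinh(\beta t+\eta)$ terms in \eqref{eqn:u_PDHO2}. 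Multiplying by $[J_\varphi(0)]^{-1}$ produces the linear combinations of $y_{22}(2\pi),y_{12}(2\pi)$ and $y_{21}(2\pi),y_{11}(2\pi)$ weighting the two bracketed terms, with the determinant $y_{11}y_{22}-y_{12}y_{21}$ sitting in the denominator.

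It then remains to identify the vector that $[J_\varphi(0)]^{-1}$ is applied to, namely $\Phi_A(2\pi,0)x_0+\int_0^{2\pi}\Phi_A(2\pi-\tau,0)b\,u^-(\tau)\,d\tau-x_f$. I would abbreviate the first two terms as $(x_1(2\pi),x_2(2\pi))$ — consistent with the notation already used in \eqref{eqn:u_PDHO1} and \eqref{eqn:u_PDHO2} — and then absorb the scalar $e^{-2\pi\alpha}/\beta$ that multiplies the whole of $\Phi_A(2\pi,0)$ in \eqref{eqn:PDHO2_Phi}; dividing through, the terminal data $s_f,v_f$ acquire the factor $\beta/e^{-2\pi\alpha}$ seen in \eqref{eqn:u_PDHO2}. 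Combining the row vector, the inverse Jacobian, and this terminal-mismatch vector, and collecting the overall constant $e^{\alpha(t-2\pi)}/(\beta^2(y_{11}y_{22}-y_{12}y_{21}))$, yields precisely \eqref{eqn:u_PDHO2}. Finally I would note that the denominator is nonzero: $[J_\varphi(0)]$ is invertible by controllability (as already invoked for $\mathcal{A}\neq\emptyset$), equivalently the determinant $y_{11}(2\pi)y_{22}(2\pi)-y_{12}(2\pi)y_{21}(2\pi)$ is nonzero, which can also be checked directly from \eqref{eqn:y_PDHO2} since $\alpha,\beta>0$ for $\zeta>1$.

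The main obstacle is purely computational rather than conceptual: keeping the signs and the $\sinh(\beta t+\eta)$ versus $\sinh(-\beta t+\eta)$ bookkeeping straight when transposing $e^{At}$ and reversing the time argument, and then matching the scalar factors ($e^{-2\pi\alpha}$, $1/\beta$, $1/\beta^2$) that are distributed between $\Phi_A$, its transpose, and $[J_\varphi(0)]^{-1}$ against the single consolidated prefactor in \eqref{eqn:u_PDHO2}. Because this is a lengthy but routine substitution sharing the pattern of Corollaries~\ref{cor:projA_PDI}, \ref{cor:projA_PHO} and \ref{cor:projA_PDHO1}, the detailed manipulation is deferred to \ref{sec:proofs}.
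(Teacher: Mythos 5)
Your proposal is correct and takes essentially the same route as the paper's own proof: direct substitution of the transition matrix and inverse Jacobian from Lemma~\ref{lem:PDHO_2} into \eqref{eqn:projA_spec}, computing $x(2\pi)$ by variation of constants, and factoring out the scalar $e^{-2\pi\alpha}/\beta$ so that $s_f,v_f$ pick up the factor $\beta/e^{-2\pi\alpha}$, which accounts for the consolidated prefactor $e^{\alpha(t-2\pi)}/\bigl(\beta^2(y_{11}y_{22}-y_{12}y_{21})\bigr)$. Your identification of $B^T\Phi_A(0,t)^T$ as $\tfrac{e^{\alpha t}}{\beta}\bigl(-\sinh(\beta t),\ \omega_0\sinh(\beta t+\eta)\bigr)$ is exactly right, and your added remark on invertibility of $J_\varphi(0)$ is a harmless supplement the paper leaves implicit.
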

\begin{proof}
See \hyperlink{cor:PDHO2_proof}{proof} in \ref{sec:proofs}.
\end{proof}

In Corollary~\ref{cor:projA_PDHO3} we consider the final case for the damped harmonic oscillator, which is the projection onto the set ${\cal A}_{\omega_0,\zeta}$ from \eqref{A} where $0<\zeta<1$.

\begin{lemma}[Computation of $\Phi_A$ and $J_\varphi$ for $\omega_0>0,0<\zeta<1$]\label{lem:PDHO_3}
One has that
\begin{equation}  \label{eqn:PDHO3_Phi}
\Phi_A(t,0) = e^{At} = \dfrac{e^{-\alpha t}}{\tb}\begin{bmatrix}
\omega_0\cos(\tb t+\gamma) & \sin(\tb t) \\ -\omega_0^2\sin(\tb t) & \omega_0\cos(\tb t-\gamma)
\end{bmatrix},
\end{equation}
where $\alpha=\omega_0\zeta, \ \tb=\omega_0\sqrt{1-\zeta^2}, \ \gamma = \tan^{-1}(-\frac{\alpha}{\tb})$. Then we express the inverse of the Jacobian as in \eqref{eqn:Jphi_PDHO} where $y_{ij}(2\pi)$, $j = 1,2$, are the components of the vectors $y_i(2\pi)$, $i = 1,2$, given below.
\begin{equation}\label{eqn:y_PDHO3}
y(2\pi) = \left[\begin{array}{c}
y_1(2\pi) \\[1mm] \hdashline[3pt/3pt] \\[-4mm]
y_2(2\pi)
\end{array}\right] = \left[\begin{array}{c}
\dfrac{e^{-2\pi\alpha}}{4\omega_0^2}\left(\dfrac{\cos(2\pi\tb)(1-e^{4\pi\alpha})}{\alpha}+\dfrac{\sin(2\pi\tb)(1+e^{4\pi\alpha})}{\tb}\right) \\[4mm]
\dfrac{\sinh(2\pi\alpha)\sin(2\pi\tb)}{2\alpha\tb} \\[4mm] \hdashline[3pt/3pt] \\[-2mm]
-\dfrac{\sinh(2\pi\alpha)\sin(2\pi\tb)}{2\alpha\tb} \\[4mm]
\dfrac{e^{-2\pi\alpha}}{4}\left(\dfrac{\cos(2\pi\tb)(1-e^{4\pi\alpha})}{\alpha}-\dfrac{\sin(2\pi\tb)(1+e^{4\pi\alpha})}{\tb}\right)
\end{array}\right].
\end{equation}
\end{lemma}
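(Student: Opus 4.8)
The lemma bundles two essentially independent computations, and I would handle them in turn.

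\emph{Step 1: the state transition matrix \eqref{eqn:PDHO3_Phi}.} The characteristic polynomial of $A$ is $\mu^2+2\zeta\omega_0\mu+\omega_0^2$, with the complex-conjugate roots $-\alpha\pm i\tb$, where $\alpha=\omega_0\zeta$ and $\tb=\omega_0\sqrt{1-\zeta^2}$; the hypothesis $0<\zeta<1$ makes $\tb>0$, so the roots are genuinely complex. Rather than diagonalising over $\mathbb{C}$, I would recover $\Phi_A(t,0)=e^{At}$ column by column: its $i$-th column is the solution of $\dot x=Ax$ with $x(0)=e_i$, equivalently the solution of the scalar equation $\ddot x_1+2\zeta\omega_0\dot x_1+\omega_0^2 x_1=0$ with $(x_1(0),\dot x_1(0))$ equal to $(1,0)$ and to $(0,1)$ respectively, coupled with $x_2=\dot x_1$. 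Each such $x_1$ has the form $e^{-\alpha t}\bigl(c_1\cos(\tb t)+c_2\sin(\tb t)\bigr)$ with $c_1,c_2$ fixed by the initial data, and $x_2$ follows by differentiation. The only nonroutine point is cosmetic: one rewrites $\cos(\tb t)\pm(\alpha/\tb)\sin(\tb t)$ as $(\omega_0/\tb)\cos(\tb t\pm\gamma)$, which follows from $\alpha^2+\tb^2=\omega_0^2$ and $\tan\gamma=-\alpha/\tb$ (hence $\cos\gamma=\tb/\omega_0$, $\sin\gamma=-\alpha/\omega_0$). Substituting into the four entries produces exactly \eqref{eqn:PDHO3_Phi}.

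\emph{Step 2: the Jacobian.} Here I would use the variational computation recorded in the Remark after Theorem~\ref{thm:projA_gen}, which computes the Jacobian $J_\varphi(0)=\partial y(t_f)/\partial\lambda_0$ through the auxiliary system for $\widetilde y$. With $n=2$, $m=1$ and $B(t)=b=(0,1)^T$, one has $\widetilde A=\mathrm{diag}(A,A)\in\mathbb{R}^{4\times4}$ and, since $bb^Tw=(0,w_2)^T$, the forcing $\widetilde B(\tau)=-bb^T\Phi_A(0,\tau)^T[e_1;e_2]$ is explicit once $\Phi_A(0,\tau)$ is known; I would obtain $\Phi_A(0,\tau)=\Phi_A(\tau,0)^{-1}$ from the $2\times2$ cofactor inverse of \eqref{eqn:PDHO3_Phi}, using $\det\Phi_A(\tau,0)=e^{-2\alpha\tau}$ (immediate from $\mathrm{tr}\,A=-2\alpha$). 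Then \eqref{eqn:y} gives $\widetilde y(2\pi)=\int_0^{2\pi}\Phi_{\widetilde A}(2\pi,\tau)\widetilde B(\tau)\,d\tau$, block-diagonal by Lemma~\ref{lem:A_tilde}, and the blocks $y_1(2\pi),y_2(2\pi)$ are precisely the columns of $J_\varphi(0)$. After forming the matrix products, every component of the integrand is a product of $e^{\pm\alpha\tau}$ with $\cos(\tb\tau)$ or $\sin(\tb\tau)$; applying $\cos^2=\tfrac12(1+\cos2\tb\tau)$, $\sin^2=\tfrac12(1-\cos2\tb\tau)$ and the product-to-sum identities, each entry reduces to elementary integrals of the type $\int e^{a\tau}\,d\tau$, $\int e^{a\tau}\cos(b\tau)\,d\tau$, $\int e^{a\tau}\sin(b\tau)\,d\tau$. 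Evaluating between $0$ and $2\pi$, grouping the boundary terms into $\sinh(2\pi\alpha)$, $\cosh(2\pi\alpha)$, $\cos(2\pi\tb)$, $\sin(2\pi\tb)$ and simplifying with $\alpha^2+\tb^2=\omega_0^2$, yields \eqref{eqn:y_PDHO3}. The expression \eqref{eqn:Jphi_PDHO} is then just the $2\times2$ cofactor inverse of the matrix with columns $y_1(2\pi),y_2(2\pi)$, which finishes the proof.

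\emph{Expected obstacle, and a check.} The only genuine effort is the bookkeeping in Step~2: several exponential--trigonometric integrals, each producing a handful of boundary terms, must be shown to collapse into the compact expressions of \eqref{eqn:y_PDHO3}. A useful safeguard (and a shortcut for the write-up) is that the under-damped case is the formal continuation $\beta\mapsto i\tb$ of the over-damped case already treated in Lemma~\ref{lem:PDHO_2}: under $\sinh(\beta t)\mapsto\sin(\tb t)$, $\cosh(\beta t)\mapsto\cos(\tb t)$ and the matching replacement in the phase, so that $\tfrac1\beta\sinh(\beta t+\eta)\mapsto\tfrac1{\tb}\cos(\tb t+\gamma)$, formulas \eqref{eqn:PDHO2_Phi} and \eqref{eqn:y_PDHO2} map term by term onto \eqref{eqn:PDHO3_Phi} and \eqref{eqn:y_PDHO3}; so one may either repeat the integral evaluation verbatim with trigonometric functions in place of hyperbolic ones, or argue directly by this substitution.
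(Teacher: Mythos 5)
Your proposal is correct and follows essentially the same route as the paper: compute $e^{At}$ directly, then use the variational-equation formula \eqref{eqn:y} from the remark after Theorem~\ref{thm:projA_gen} with $\widetilde{A}=\mathrm{diag}(A,A)$ and the forcing $\widetilde{b}(\tau)=-bb^T\Phi_A(0,\tau)^T[e_1;e_2]$, integrate to $t_f=2\pi$ to obtain \eqref{eqn:y_PDHO3}, and form $J_\varphi(0)=[y_1(2\pi)\,|\,y_2(2\pi)]$ whose cofactor inverse gives \eqref{eqn:Jphi_PDHO}. The added observation that the under-damped case is the formal continuation $\beta\mapsto i\tb$ of Lemma~\ref{lem:PDHO_2} is a nice consistency check but not part of the paper's argument.
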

\begin{proof}
See \hyperlink{lem:PDHO3_proof}{proof} in \ref{sec:proofs}.
\end{proof}

\begin{corollary}[Projection onto ${\cal A}_{\omega_0,\zeta}$ with $0<\zeta<1$]\label{cor:projA_PDHO3}
The projection $P_{{\cal A}_{\omega_0,\zeta}}$ of $u^-\in {\cal L}^2([0,2\pi];\dR)$ onto the constraint set ${\cal A}_{\omega_0,\zeta}$, as the solution of Problem~{\em(P1)} where $0<\zeta<1$, is given by
\begin{align*}
P_{{\cal A}_{\omega_0,\zeta}} &= u^-(t)+\dfrac{e^{\alpha(t-2\pi)}}{\tb^2(y_{11}(2\pi)y_{22}(2\pi)-y_{12}(2\pi)y_{21}(2\pi))}\\&\Bigg(-\left(y_{22}(2\pi)\sin(\tb t)+y_{12}(2\pi)\omega_0\cos(\tb t+\gamma)\right)\left(x_1(2\pi)-\frac{\tb s_f}{e^{-2\pi\alpha}}\right)\\& + \left(y_{21}(2\pi)\sin(\tb t)+y_{11}(2\pi)\omega_0\cos(\tb t+\gamma)\right)\left(x_2(2\pi)-\frac{\tb v_f}{e^{-2\pi\alpha}}\right) \Bigg),
\end{align*}
where $\alpha=\omega_0\zeta, \ \tb=\omega_0\sqrt{1-\zeta^2}$ and $y_{ij}$ are the components of $y(2\pi)$ given in \eqref{eqn:y_PDHO3}.
\end{corollary}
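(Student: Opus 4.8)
The plan is to obtain the stated formula by direct substitution into the closed-form projector \eqref{eqn:projA_spec} of Theorem~\ref{thm:projA_gen}, which is available here precisely because Lemma~\ref{lem:PDHO_3} supplies a short analytic expression for $[J_\varphi(0)]^{-1}$ (namely \eqref{eqn:Jphi_PDHO} with the entries \eqref{eqn:y_PDHO3}). Throughout I would specialize $t_0=0$, $t_f=2\pi$, $B(t)\equiv b=(0,1)^T$, $x_0=(s_0,v_0)^T$, $x_f=(s_f,v_f)^T$, with $\alpha=\omega_0\zeta$, $\tb=\omega_0\sqrt{1-\zeta^2}$ and $\gamma=\tan^{-1}(-\alpha/\tb)$ as in Lemma~\ref{lem:PDHO_3}.

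First I would compute the left factor $b^T\Phi_A(0,t)^T$ occurring in \eqref{eqn:projA_spec}. Since $A$ is time-invariant, $\Phi_A(0,t)=(e^{At})^{-1}$, and since $\det e^{At}=e^{t\operatorname{tr}A}=e^{-2\alpha t}$ (the $2\times 2$ matrix in \eqref{eqn:PDHO3_Phi} is consistent with this thanks to the identity $\omega_0^2\cos^2\gamma=\tb^2$), inverting the matrix in \eqref{eqn:PDHO3_Phi} gives
\[
\Phi_A(0,t)=\frac{e^{\alpha t}}{\tb}\begin{bmatrix}\omega_0\cos(\tb t-\gamma) & -\sin(\tb t)\\ \omega_0^2\sin(\tb t) & \omega_0\cos(\tb t+\gamma)\end{bmatrix},
\]
so that left-multiplication by $b^T=(0,1)$ picks out the second column of $\Phi_A(0,t)$: $b^T\Phi_A(0,t)^T=\frac{e^{\alpha t}}{\tb}\bigl(-\sin(\tb t),\ \omega_0\cos(\tb t+\gamma)\bigr)$.

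Next I would treat the right factor $\Phi_A(t_f,t_0)x_0+\int_{t_0}^{t_f}\Phi_A(t_f,\tau)B(\tau)u^-(\tau)\,d\tau-x_f=y(2\pi)-x_f$. Using the time-invariance identity $\Phi_A(2\pi,\tau)=e^{A(2\pi-\tau)}$ and factoring out of every entry of $\Phi_A(\cdot,0)$ in \eqref{eqn:PDHO3_Phi} the common scalar $e^{-2\pi\alpha}/\tb$, I would introduce the rescaled terminal state $x(2\pi):=\tb\,e^{2\pi\alpha}\,y(2\pi)$; its components $x_1(2\pi),x_2(2\pi)$ are exactly those appearing in the statement, and then $y(2\pi)-x_f$ equals $e^{-2\pi\alpha}/\tb$ times the vector with components $x_1(2\pi)-\tb s_f/e^{-2\pi\alpha}$ and $x_2(2\pi)-\tb v_f/e^{-2\pi\alpha}$.

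Finally I would substitute these two factors, together with $[J_\varphi(0)]^{-1}$ from \eqref{eqn:Jphi_PDHO} (with $y_{ij}=y_{ij}(2\pi)$ read off from \eqref{eqn:y_PDHO3}), into \eqref{eqn:projA_spec}: the three scalar prefactors $e^{\alpha t}/\tb$, $1/(y_{11}y_{22}-y_{12}y_{21})$ and $e^{-2\pi\alpha}/\tb$ combine into $e^{\alpha(t-2\pi)}/\bigl(\tb^2(y_{11}y_{22}-y_{12}y_{21})\bigr)$, and carrying out the remaining row-by-matrix-by-column product reproduces the bracketed expression in the corollary. The computation is entirely routine; the only delicate points are keeping the sign of $\gamma$ straight when inverting $\Phi_A(t,0)$ (it flips to $-\gamma$ off the diagonal) and the bookkeeping of the rescaling that converts $y(2\pi)-x_f$ into the normalized differences $x_i(2\pi)-\tb(\cdot)_f/e^{-2\pi\alpha}$. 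Since every step mirrors, mutatis mutandis, the derivations already carried out for Corollaries~\ref{cor:projA_PDHO1} and \ref{cor:projA_PDHO2}, I would relegate the detailed algebra to \ref{sec:proofs}.
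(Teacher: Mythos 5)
Your proposal is correct and follows essentially the same route as the paper's own proof: direct substitution of the state transition matrix and $[J_\varphi(0)]^{-1}$ from Lemma~\ref{lem:PDHO_3} into \eqref{eqn:projA_spec}, computation of the terminal state driven by $u^-$, and extraction of the common scalar $e^{-2\pi\alpha}/\tb$ so that the prefactors combine into $e^{\alpha(t-2\pi)}/\bigl(\tb^2(y_{11}y_{22}-y_{12}y_{21})\bigr)$. Your explicit inversion of $\Phi_A(t,0)$ (using $\omega_0^2\cos^2\gamma=\tb^2$) and your identification of $x_1(2\pi),x_2(2\pi)$ as the rescaled terminal-state components are exactly what the paper's computation amounts to, so only the routine final matrix product you defer remains, just as in the paper's appendix.
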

\begin{proof}
See \hyperlink{cor:PDHO3_proof}{proof} in \ref{sec:proofs}.
\end{proof}
\begin{remark}
Note that Corollary~\ref{cor:projA_PHO} cannot be recovered from Corollary~\ref{cor:projA_PDHO3} by simply taking $\zeta\to0$. Similarly, Corollary~\ref{cor:projA_PDHO1} cannot be recovered from Corollary~\ref{cor:projA_PDHO2} or Corollary~\ref{cor:projA_PDHO3} by taking $\zeta\to1$.
\end{remark}

\subsection{Machine tool manipulator}
A machine tool manipulator is an automatic machine that simulates human hand operations. The dynamics of this machine can be formulated as a LQ control problem as in \cite{MTM}---also see \cite{BurKayMaj2014}. For this problem the system and control matrices in \eqref{A_gen} become
\begin{align*}
A(t) &= A = \begin{bmatrix}
0 & 0 & 0 & 1 & 0 & 0 & 0 \\
0 & 0 & 0 & 0 & 1 & 0 & 0 \\
0 & 0 & 0 & 0 & 0 & 1 & 0 \\
-4.441\times10^7/450 & 0 & 0 & -8500/450 & 0 & 0 & -1/450 \\
0 & 0 & 0 & 0 & 0 & 0 & 1/750 \\
0 & 0 & -8.2\times10^6/40 & 0 & 0 & -1800/40 & 0.25/40 \\
0 & 0 & 0 & 0 & 0 & 0 & -1/0.0025
\end{bmatrix}, \\ B(t) &= b = \begin{bmatrix}
0 & 0 & 0 & 0 & 0 & 0 & 1/0.0025
\end{bmatrix}^T.
\end{align*}
Unlike the special cases in the previous subsection we will not provide analytical projectors for this problem. Because this problem has 7 state variables computing $\Phi_A$ and $J_\phi(0)$ analytically is not a simple task. Instead we will introduce and implement the numerical procedure in Section \ref{sec:num}.

\section{Douglas--Rachford Algorithm}
\label{sec:projAlg}
The Douglas--Rachford algorithm, in our context, is a projection algorithm, which we recall here by  closely following the framework in \cite{BausBuraKaya2019}. We consider a real Hilbert space denoted by $X$, with inner product $\langle\cdot,\cdot\rangle$ and induced norm $\|\cdot\|$.  We will consider the sets $\cal{A}$ and $\cal{B}$ to align with the previous results but note that the only assumptions required are that $\cal{A}$ is a closed affine subspace of $X$ and $\cal{B}$ is a nonempty closed convex subset of $X$.

In our setting, we assume that we are able to compute the projector operators $P_{\cal{A}}$ and $P_{\cal{B}}$.  These operators project a given point onto each of the constraint sets $\cal{A}$ and $\cal{B}$, respectively. Recall that the {\em proximal mapping} of a functional $h$ is defined by \cite[Definition~12.23]{BauCombettes}:
	\begin{equation*}  \label{def:prox}
		{\rm Prox}_h(u) := \argmin_{y\in L^2([t_0,t_f];\dR^m)}
		\left(h(y) + \frac{1}{2}\|y - u\|_{L^2}^2  \right),
	\end{equation*}
	for any $u\in L^2([t_0,t_f];\dR^m)$. We also recall that the {\em indicator function} $\iota_C$ of $C$ is given by
  \[
\iota_{\cal C}(x) := \left\{\begin{array}{ll}
    0\,, & \mbox{ if\ \ } x\in{\cal C}\,, \\
    \infty\,, & \mbox{ otherwise}.
    \end{array}\right.
 \]
Note that ${\rm Prox}_{\iota_{\cal C}}=P_{\cal C}$.
Given $\beta>0$, we specialize the DR algorithm (see
\cite{DougRach}, \cite{LM} and \cite{EckBer}) 
to the case of minimizing the sum of the two functions $f(x):=\iota_{\mathcal{B}}(x) +
\tfrac{\beta}{2}\|x-z\|^2$ and $g :=
\iota_{\cal A}$. For this case, the DR operator becomes
\[ 
T :=  \Id -  {\rm Prox}_f +  {\rm Prox}_g(2 {\rm Prox}_f-\Id). 
\]
Given $f,g$ we know that the respective proximal mappings are 
$ {\rm Prox}_f(x) = P_{\cal{B}}\big(\tfrac{1}{1+\beta}x+\tfrac{\beta}{1+\beta}z\big)$ and
$ {\rm Prox}_g =P_{\cal{A}}$ (see \cite[Proposition~24.8(i)]{BauCombettes}).
Set $\lambda := \tfrac{1}{1+\beta}\in\left]0,1\right[$. 
It follows that the DR operator becomes
\begin{align}
Tx &= x-P_{\cal{B}}\big(\lambda x+(1-\lambda)z\big)+P_{\cal{A}}\Big(2P_{\cal{B}}\big(\lambda
x+(1-\lambda)z\big)-x\Big).
\end{align}
Now fix $x_0\in X$ and let $z:=0$. Given $x_n\in X$, $n\geq 0$, update
\begin{equation}
\label{e:180304a}
b_n:= P_{\cal{B}}\big(\lambda x_n\big),\;\;
x_{n+1} := Tx_n 
= x_n-b_n
+P_{\cal{A}}\big(2b_n-x_n\big).
\end{equation}

Using \cite[Corollary 28.3(v)(a)]{BauCombettes} we have that $(b_n)_{n\in\mathbb{N}}$ converges strongly to the unique solution of Problem (P). Observe that strong convergence is due to the strong convexity of the function $f$, and is for the sequence $(b_n)_{n\in\mathbb{N}}$ and not necessarily $(x_n)_{n\in\mathbb{N}}$. By definition of Problem (P), the unique solution of (P) is the element of minimum norm in $\cal{A}\cap \cal{B}$.  Namely, we have that the limit of the sequence $(b_n)_{n\in\mathbb{N}}$ is $x=P_{\cal{A}\cap \cal{B}}(0)$.  We point out that, in general, only weak convergence is guaranteed for this method (see \cite[Theorem 1]{Svaiter} or \cite[Theorem 4.4]{BauMoursi}).

Note that \eqref{e:180304a} simplifies to 
\begin{equation}\label{eq:DR}
x_{n+1} := x_n - P_{\cal{B}}(\lambda x_n)+P_{\cal{A}}\big(2P_{\cal{B}}(\lambda x_n)-x_n\big)
\quad\text{provided that $z=0$.}
\end{equation}
See Algorithm \ref{alg:DR} below for a step-by-step description of the numerical implementation. \\

\noindent
\begin{algorithm}{({\bf DR})}\label{alg:DR}
\begin{description}
\item[Step 1] ({\em Initialization}) Choose a parameter $\lambda\in\left]0,1\right[$ and the initial iterate $u^0$ arbitrarily. 
Choose a small parameter $\varepsilon>0$, and set $k=0$. 
\item[Step 2] ({\em Projection onto ${\cal B}$})  Set $u^- = \lambda u^{k}$. 
Compute $\widetilde{u} = P_{{\cal B}}(u^-)$ by using \eqref{eqn:projB}. 
\item[Step 3] ({\em Projection onto ${\cal A}$}) Set $u^- := 2\widetilde{u}-u^k$. 
Compute $\widehat{u} = P_{{\cal A}}(u^-)$ by using \eqref{eqn:projA} or Algorithm~\ref{alg:projA}.
\item[Step 4] ({\em Update}) Set $u^{k+1} := u^k + \widehat{u} - \widetilde{u}$.
\item[Step 5] ({\em Stopping criterion}) If $\|u^{k+1} - u^k\|_{L^\infty} \le \varepsilon$, then return $\widetilde{u}$ and stop.  
Otherwise, set $k := k+1$ and go to Step 2.
\end{description}
\end{algorithm}


\begin{remark} \rm
Robustness of the DR algorithm is supported by the fact that many inexact versions of it are shown to converge as well, see \cite{Svaiter2019,AlvEckGerMel2020}. In \cite{AlvGer2019} we see a study of the complexity of an inexact version of the algorithm. This justifies the use of discrete approximations of the function iterates in our implementation.
\proofbox
\end{remark}

\begin{remark} \rm
The convergence properties of the DR algorithm for the case when ${\cal A} \cap {\cal B} = \emptyset$, i.e., when Problem~(P) is infeasible, have been studied recently by Bauschke and Moursi~\cite{BauMou2021}.  A study of this interesting case for optimal control might be an promising direction to pursue in the future.
\proofbox
\end{remark}

\section{Numerical Approach}
\label{sec:num}

In Subsection \ref{subsec:proj_2var} we have a selection of problems where we have derived analytical expressions for the projection onto $\cal{A}$. In practice however the state transition matrix may be too difficult (if not impossible) to find analytically, in which case one needs to employ a numerical technique, as will be outlined further below. Following the presentation of this algorithm we give numerical experiments used to choose the optimal values of the parameter $\lambda$ for the DR algorithm and compare the performance of DR with the AMPL-Ipopt suite.

\subsection{Background and algorithm for projector onto ${\bf\cal A}$}
From Equation \eqref{eqn:gen_u} we can see that in order to define the projection we must find $\lambda$. In Theorem \ref{thm:projA_gen} we assumed that $\Phi_{A}(t_0,t)$ is available. We can see from \eqref{eqn:lambda_gen} that the knowledge of $\Phi_{A}(t_0,t)$ is necessary to find $\lambda$. In the case where we cannot find the state transition matrix directly to substitute into \eqref{eqn:lambda_gen}, we must solve 
\begin{align}\label{eqn:lin_sys}
\begin{bmatrix}\dot{x}(t) \\ \dot{\lambda}(t)\end{bmatrix} = \begin{bmatrix}A(t) & -B(t)B^T(t) \\ 0_{n\times n} & -A^T(t)\end{bmatrix}\begin{bmatrix}x(t) \\ \lambda(t)\end{bmatrix} + \begin{bmatrix}B(t) \\ 0_{n\times m}\end{bmatrix}u^-(t)\,,
\end{align}
for all $t\in[t_0,t_f]$, with the initial conditions (ICs) $x(t_0)=x_0$ and $x(t_f)=x_f$, to find $\lambda$. 
Throughout the steps of Algorithm~\ref{alg:projA}, we will solve the linear system \eqref{eqn:lin_sys} with different ICs.
The ICs that we will consider are
\begin{align}\label{eqn:IC}
\mbox{(i)} \begin{bmatrix}x(t_0) \\ \lambda(t_0)\end{bmatrix} = \begin{bmatrix}x_0 
\\ 0 \end{bmatrix}, \ \
\mbox{(ii)} \begin{bmatrix}x(t_0) \\ \lambda(t_0)\end{bmatrix} = \begin{bmatrix}x_0 \\ e_i \end{bmatrix}, \ \
\mbox{(iii)}\begin{bmatrix}x(t_0) \\ \lambda(t_0)\end{bmatrix} = \begin{bmatrix}x_0 \\ \lambda_0 \end{bmatrix}.
\end{align}

As in the proof of Theorem \ref{thm:projA_gen} we define $z(t,\lambda_0):=x(t)$. Recall in this case that\linebreak $\dot{x}(t) = dx(t)/dt$ can be written as $\partial z(t,\lambda_0)/\partial t$. 
We also recall that the near-miss function $\varphi:\mathbb{R}^n\rightarrow\mathbb{R}^n$ as defined in \eqref{eqn:nearmiss} is affine by Lemma \ref{lem:shooting}(i) and Definition \ref{def:NMF}.  Then the Taylor series expansion of $\varphi$ about zero is simply
\begin{align*}
\varphi(\lambda_0) = \varphi(0)+J_{\varphi}(0)\lambda_0\,.
\end{align*}
Substituting \eqref{eqn:nearmiss}, one gets
\[
z(t_f,\lambda_0) = z(t_f,0)+J_{\varphi}(0)\lambda_0\,,
\]
and, rearranging,
\[
J_{\varphi}(0)\lambda_0 = z(t_f,\lambda_0)-z(t_f,0)\,.
\]
Suppose $\lambda_0=e_i$. Then
\begin{align}\label{eqn:J_comp}
J_{\varphi}(0)e_i= z(t_f,e_i)-z(t_f,0)\,,
\end{align}
which is the $i$th column of $J_{\varphi}(0)$.  Therefore,
by finding $z(t_f,0)$ and $z(t_f,e_i)$ for every $i = 1,\ldots,n$ we can build the Jacobian $J_\varphi(0)$.  Consequently, a procedure for constructing $J_\varphi(0)$ can be prescribed as follows.
\begin{enumerate}
    \item Solve \eqref{eqn:lin_sys} with ICs (ii) in~\eqref{eqn:IC} to get $z(t_f,e_i)$.
    \item Solve \eqref{eqn:lin_sys} with ICs (i) in~\eqref{eqn:IC} to get $z(t_f,0)$.
    \item Compute the $i$th column of $J_\varphi(0)$ using \eqref{eqn:J_comp}, for $i = 1,\ldots,n$, and obtain $J_\varphi(0)$.
\end{enumerate}

As in the proof of Lemma \ref{lem:shooting}, we can now solve the linear system
\begin{equation}  \label{eqn:lineq_lambda0}
 J_{\varphi}(0)\, {\lambda_0} = -\varphi(0) = -(z(t_f,0)-x_f)\,,
\end{equation}
for $\lambda_0$, since in  the procedure for finding $J_\varphi(0)$ we have computed all the other components of this equation. Then once we have $\lambda_0$ we can solve \eqref{eqn:lin_sys} with ICs (iii) in~\eqref{eqn:IC} to obtain $\lambda$.

The algorithm below describes the steps for computing the projection of a current iterate $u^-$ onto the constraint set $\cal{A}$.  In solving \eqref{eqn:lin_sys} with each of the ICs in \eqref{eqn:IC} we implement {\sc Matlab}'s numerical ODE solver {\sf ode45} or a direct implementation of some Runge--Kutta method such as the Euler method.

\begin{algorithm}{({\bf Numerical Computation of the Projector onto ${\cal A}$})} \label{alg:projA}\
\begin{description}
\item[Step 0] ({\em Initialization}) The following are given: Current iterate $u^-$, the system and control matrices $A(t)$ and $B(t)$, the numbers of state and control variables $n$ and $m$, and the initial and terminal states $x_0$ and $x_f$, respectively.
\item[Step 1] ({\em Near-miss function}) Solve \eqref{eqn:lin_sys} with ICs in \eqref{eqn:IC}(i) to find $z(t_f,0) := x(t_f)$.  \\ Set $\varphi(0) :=  z(t_f,0)-x_f$.
\item[Step 2] ({\em Jacobian}) For $i = 1,\ldots,n$, solve \eqref{eqn:lin_sys} with ICs in \eqref{eqn:IC}(ii), to get $z(t_f,e_i)$. \\ 
Set $\beta_i(t) := z(t_f,e_i) - z(t_f,0)$ and $J_\varphi(0) := \left[\beta_1(t)\ |\ \dots\ |\ \beta_n(t) \right]$.
\item[Step 3] ({\em Missing IC}) Solve $J_{\varphi}(0)\,\lambda_0 := -\varphi(0)$ for $\lambda_0$. 
\item[Step 4] ({\em Projector onto ${\cal A}$}) Solve \eqref{eqn:lin_sys} with ICs in \eqref{eqn:IC}(iii) to find $\lambda(t)$.  \\ 
Set  $P_{\cal{A}}(u^-)(t) := u^-(t)-B^T(t)\lambda(t)$.
\end{description}
\end{algorithm}

\subsection{Experiments}
For computations in this section we use {\sc Matlab} release R2021b for implementing the DR algorithm and error analysis. We also use AMPL--Ipopt computational suite \cite{AMPL,WacBie2006} (with Ipopt version 3.12.13) for comparison with the DR algorithm since the suit is commonly used for solving similar optimal control problems. 

In Figure \ref{fig:plots} we have the pure and under-damped oscillator solution plots for the constrained control where $\omega_0=5$.  The boundary conditions are $x_2(0) = 1$ and $x_1(0) = x_1(2\pi) = x_2(2\pi) = 0$.  The bound on $u$ for the under-damped case is much smaller than the value used in the pure case to ensure that the control constraint is active.  In Figure~\ref{fig:MTM}, we display the control variable solution plot for the machine tool manipulator with $|u(t)|\leq 2000$.

\begin{figure}[t!]
\begin{subfigure}{0.5\textwidth}
    \centering
    \includegraphics[width=7cm]{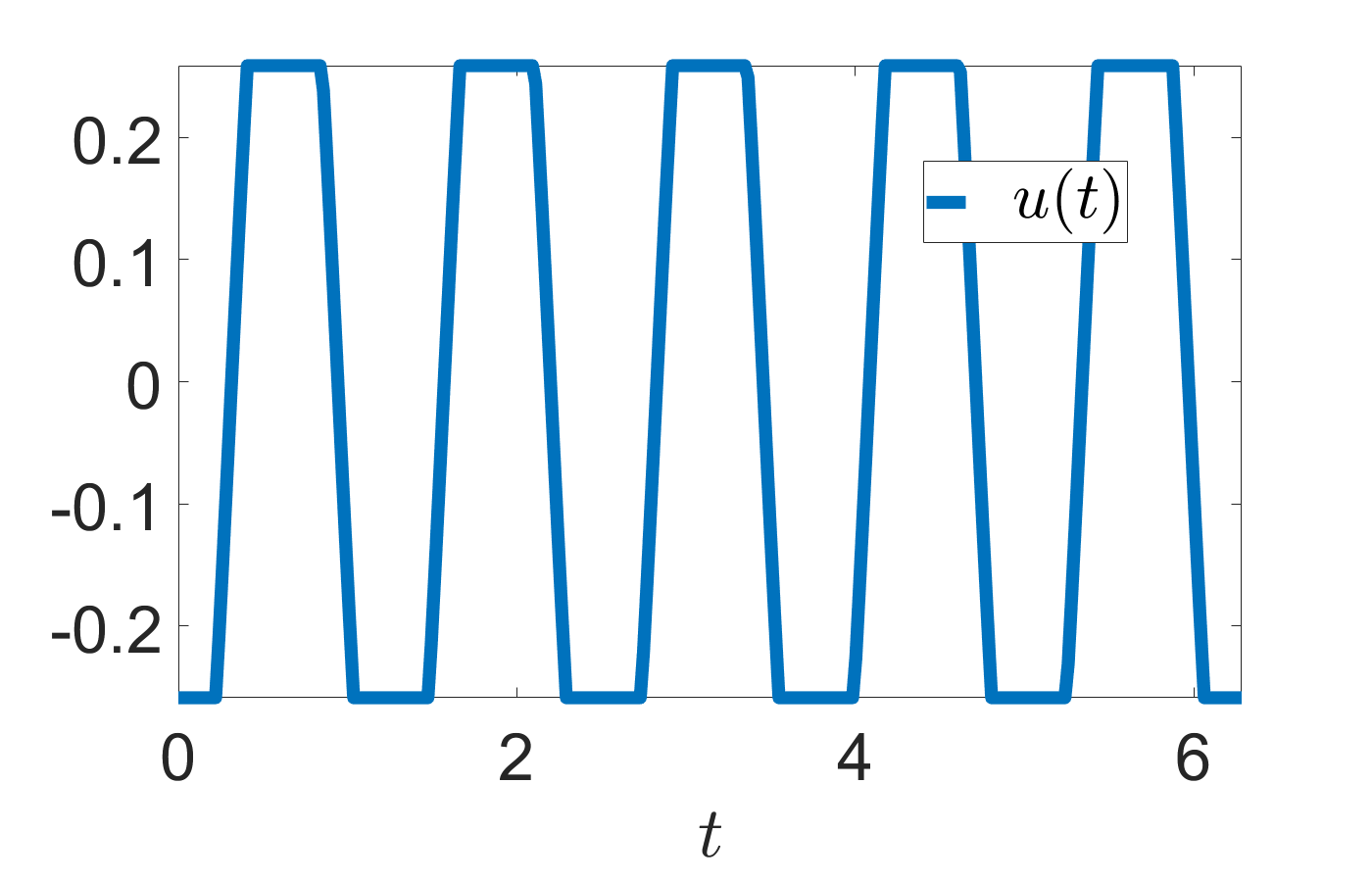}
    \includegraphics[width=7cm]{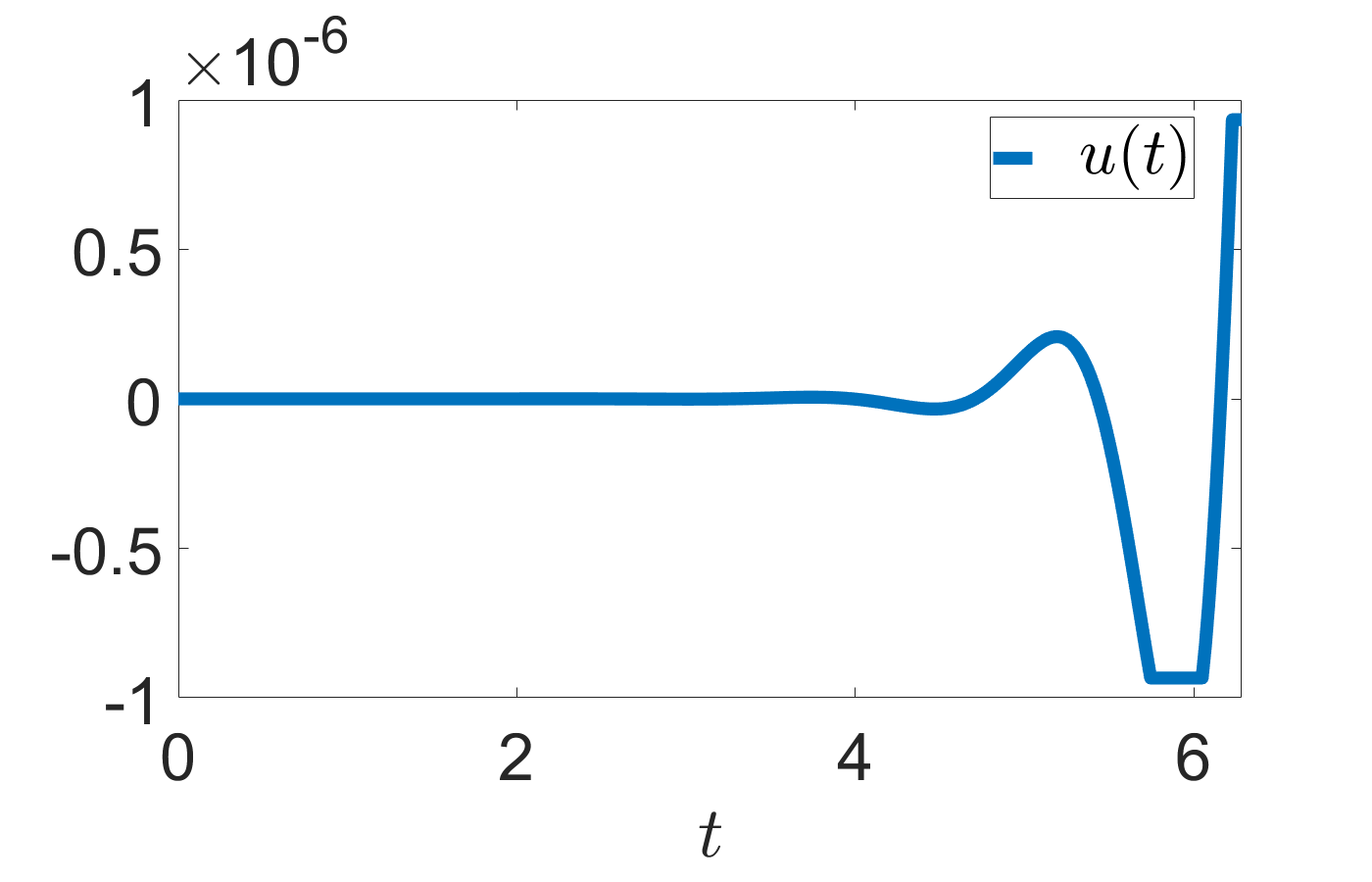}
    \caption{\footnotesize Control $u(t)$.}
\end{subfigure}
\begin{subfigure}{0.5\textwidth}
    \centering
    \includegraphics[width=7cm]{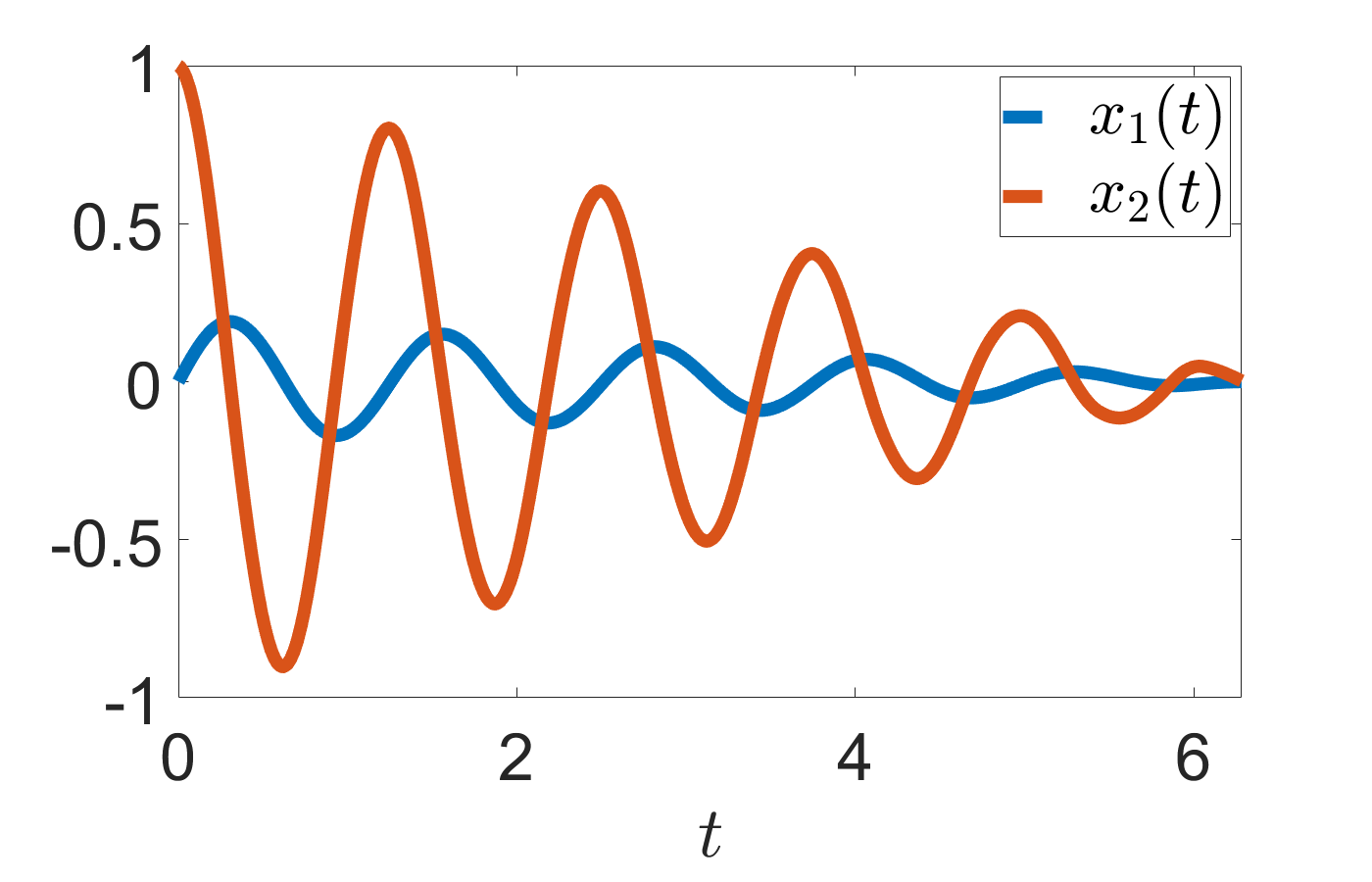}
    \includegraphics[width=7cm]{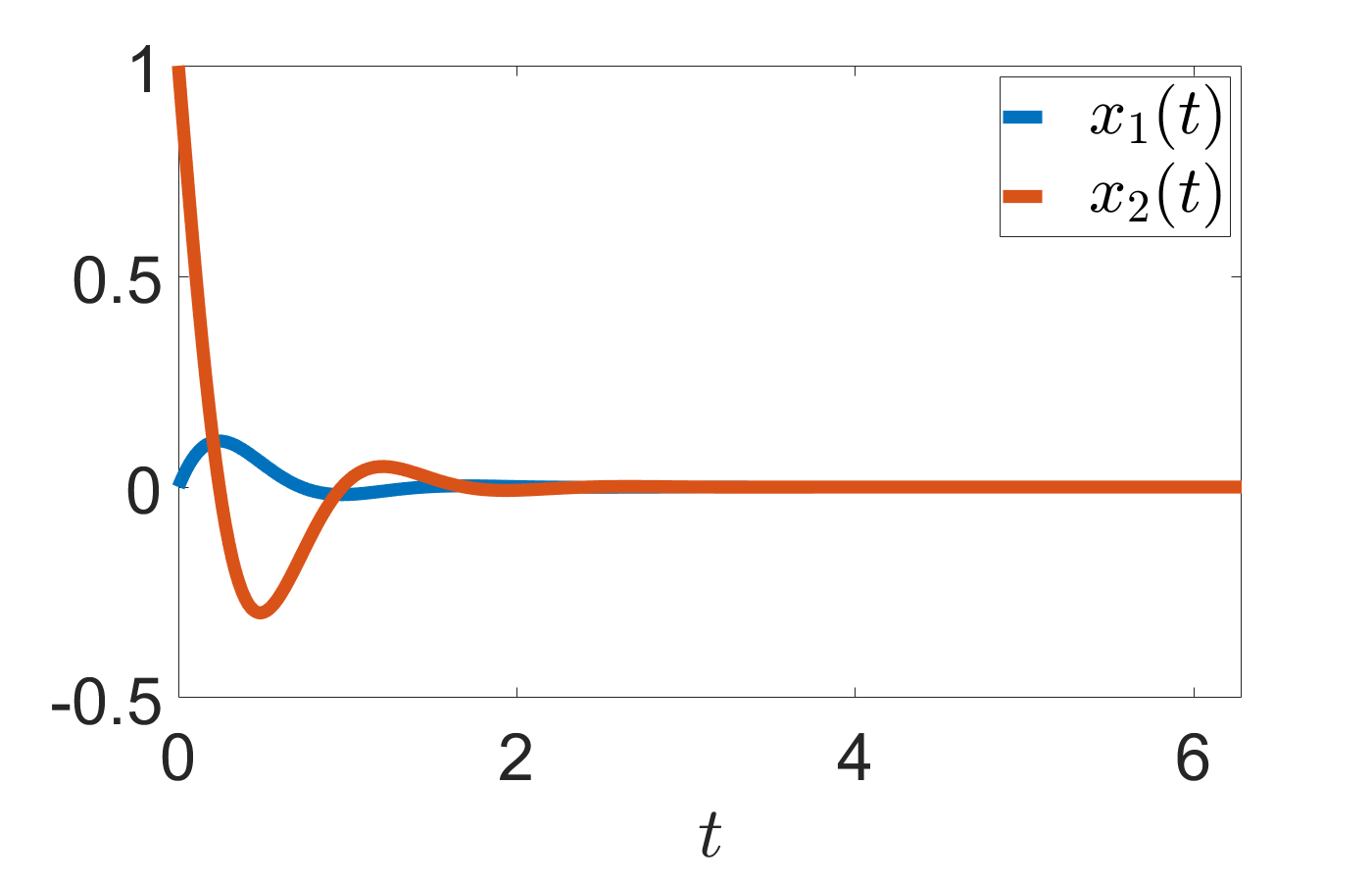}
    \caption{\footnotesize State $x_1(t),x_2(t)$.}
\end{subfigure}
\caption{\sf\small Top plots for $\omega_0=5,\,\zeta=0$ where $|u(t)|\leq 0.259$. Bottom plots for $\omega_0=5,\,\zeta=0.5$ where $|u(t)|\leq 9.34\times10^{-7}$.}
\label{fig:plots}
\end{figure}

\begin{figure}[t!]
    \centering
    \includegraphics[width=10cm]{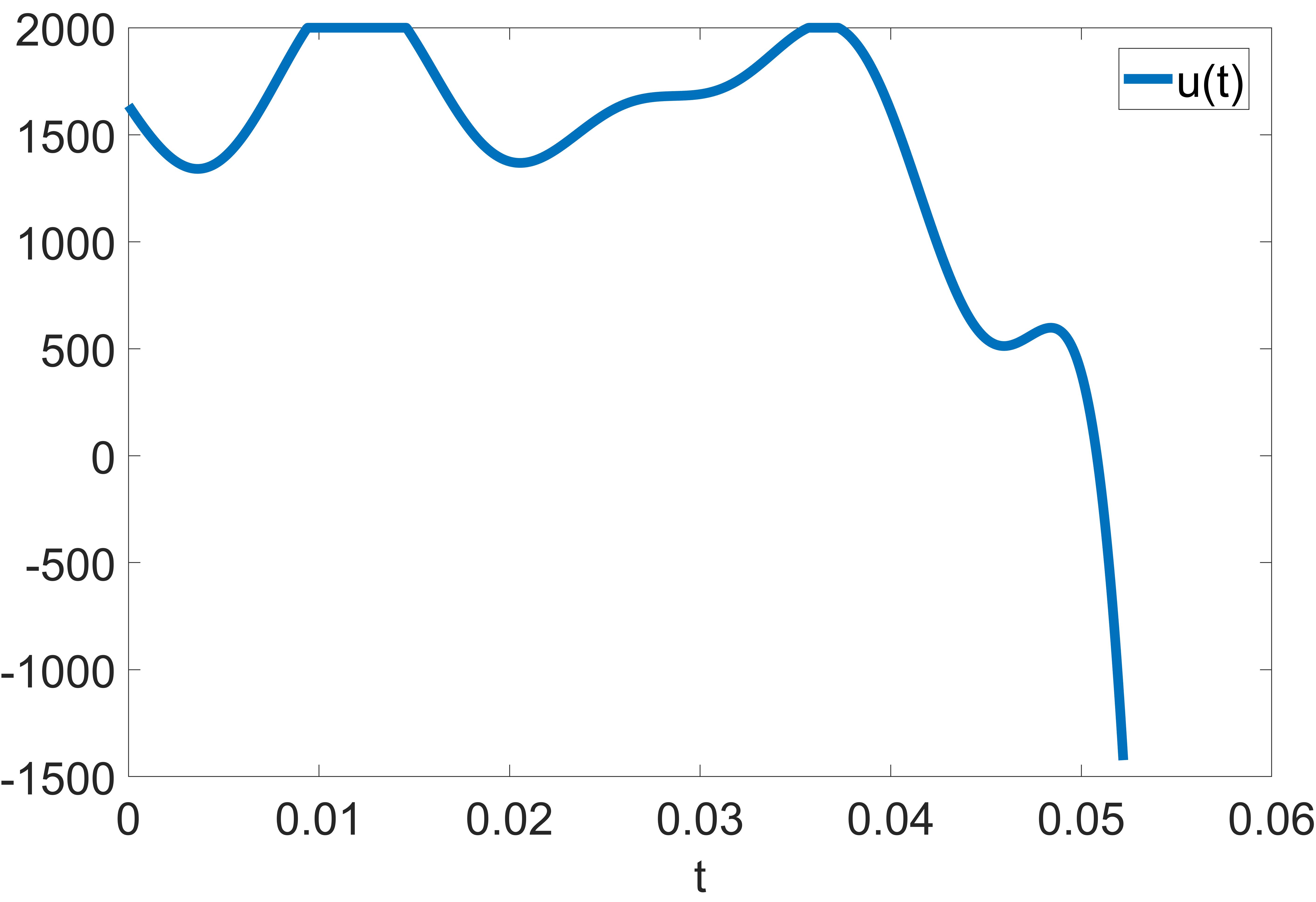}
\caption{\sf\small Control solution plot for the machine tool manipulator where $|u(t)|\leq 2000$.}
\label{fig:MTM}
\end{figure}

\subsubsection{Parameter plots}

In Section \ref{sec:projAlg} the DR algorithm requires a choice of $\lambda\in]0,1[$.   In Figure \ref{fig:PHO} we experiment with different parameter choices for the pure harmonic oscillator, under-damped harmonic oscillator and machine tool manipulator.

In Figure~\ref{fig:PHO} we see, for different bounds on $u$, plots with the number of iterations taken by DR for different values of the parameter $\lambda$. In each of these plots five values for the bound on $u$ were taken, the smallest bound being close to the value that will lead to a problem with no solution and the largest resulting in an unconstrained $u$. These plots give information on the ``best" value of $\lambda$ to choose that produce the smallest number of iterations. This is advantageous because a reduction in the number of iterations will result in a reduction in run time.

\begin{figure}[t!]
\begin{subfigure}{0.5\textwidth}
    \centering
    \includegraphics[width=7cm]{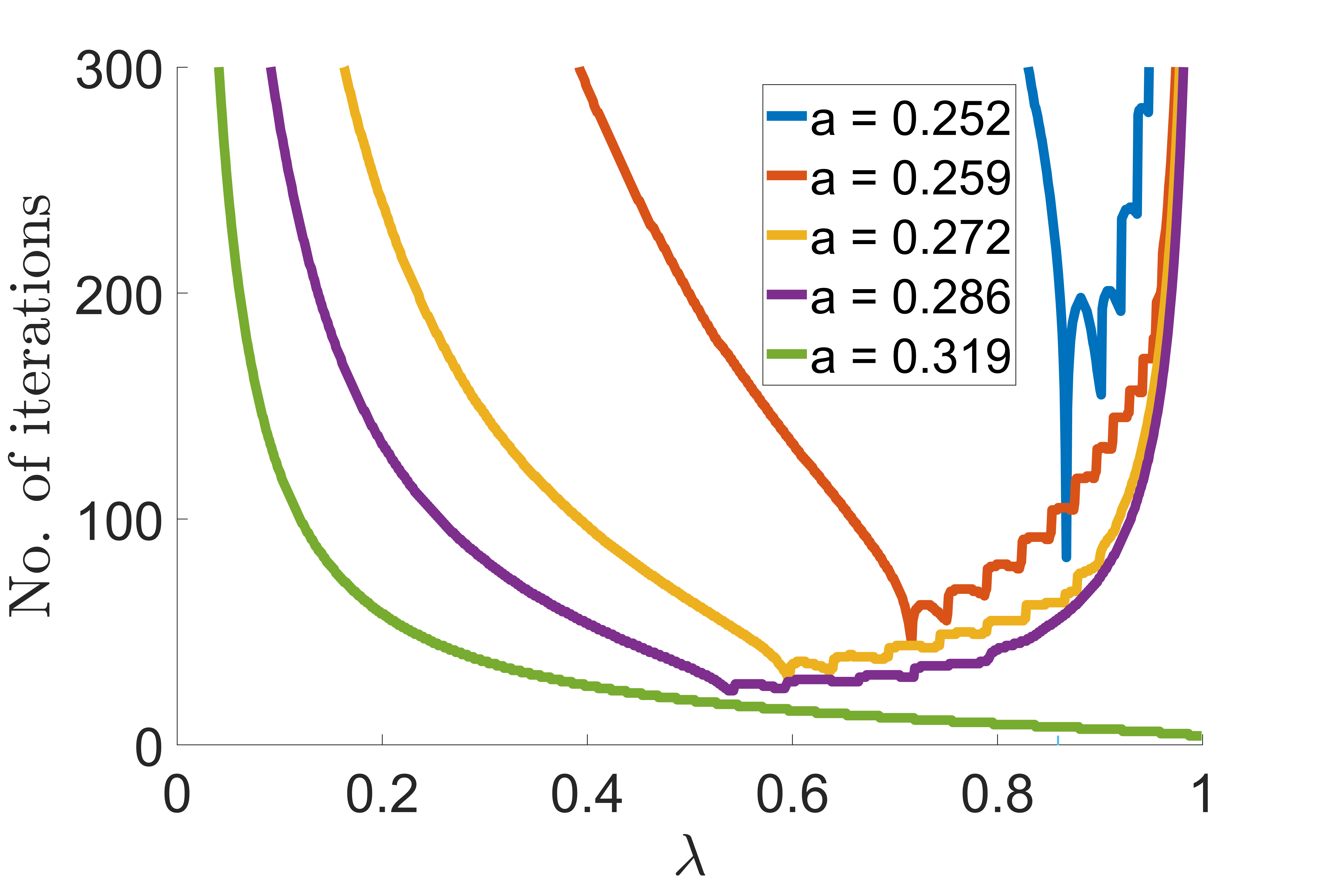} \\
    \caption{\footnotesize $1\le\omega_0\le100,\,\zeta=0,\,\varepsilon=10^{-6}$}
    \label{fig:zeta_zero}
\end{subfigure}
\begin{subfigure}{0.5\textwidth}
    \centering
    \includegraphics[width=7cm]{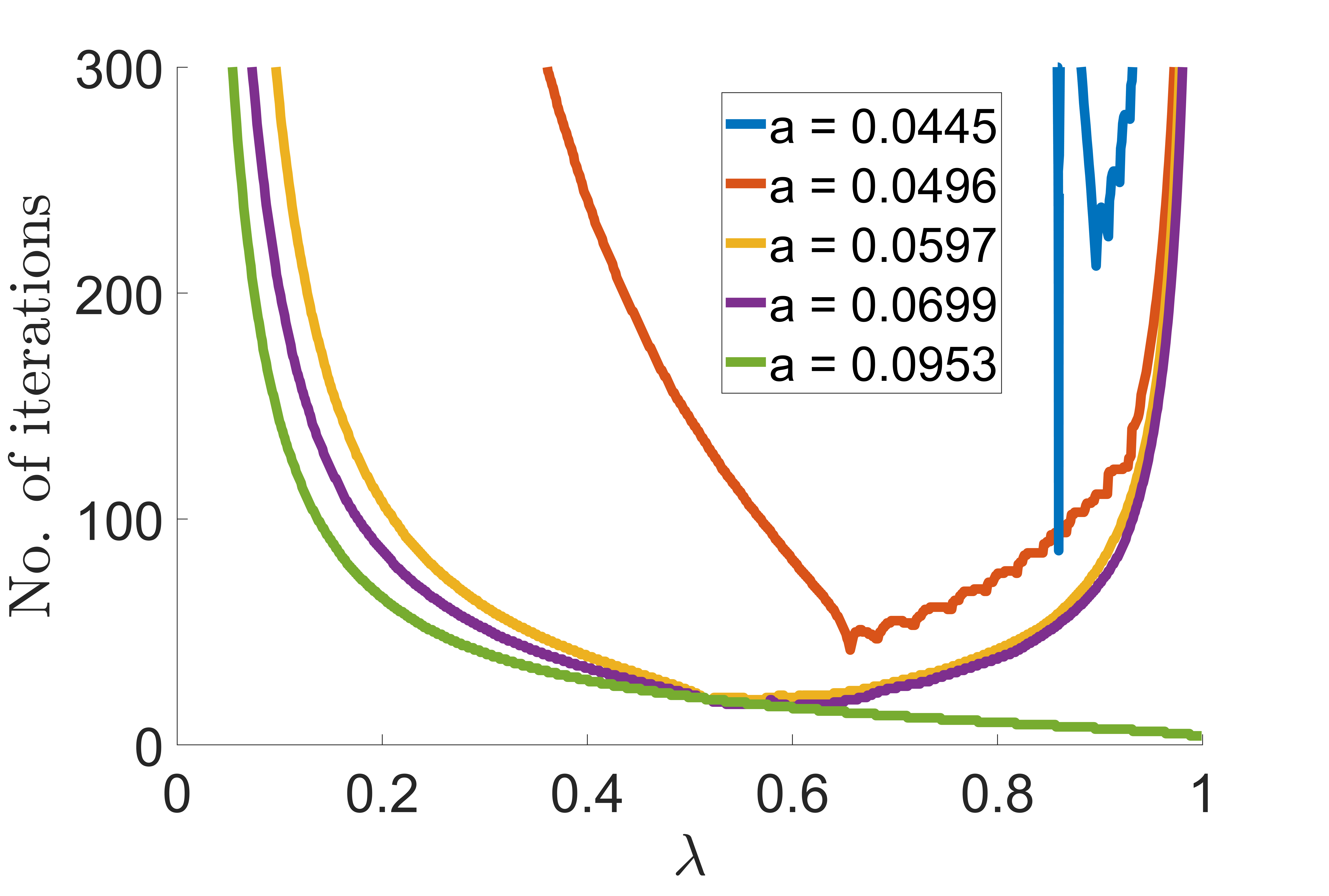} \\
    \caption{\footnotesize $\omega_0=1,\,\zeta=0.5,\,\varepsilon=10^{-7}$}
    \label{fig:w0_1}
\end{subfigure}
\begin{subfigure}{0.5\textwidth}
    \centering
    \includegraphics[width=7cm]{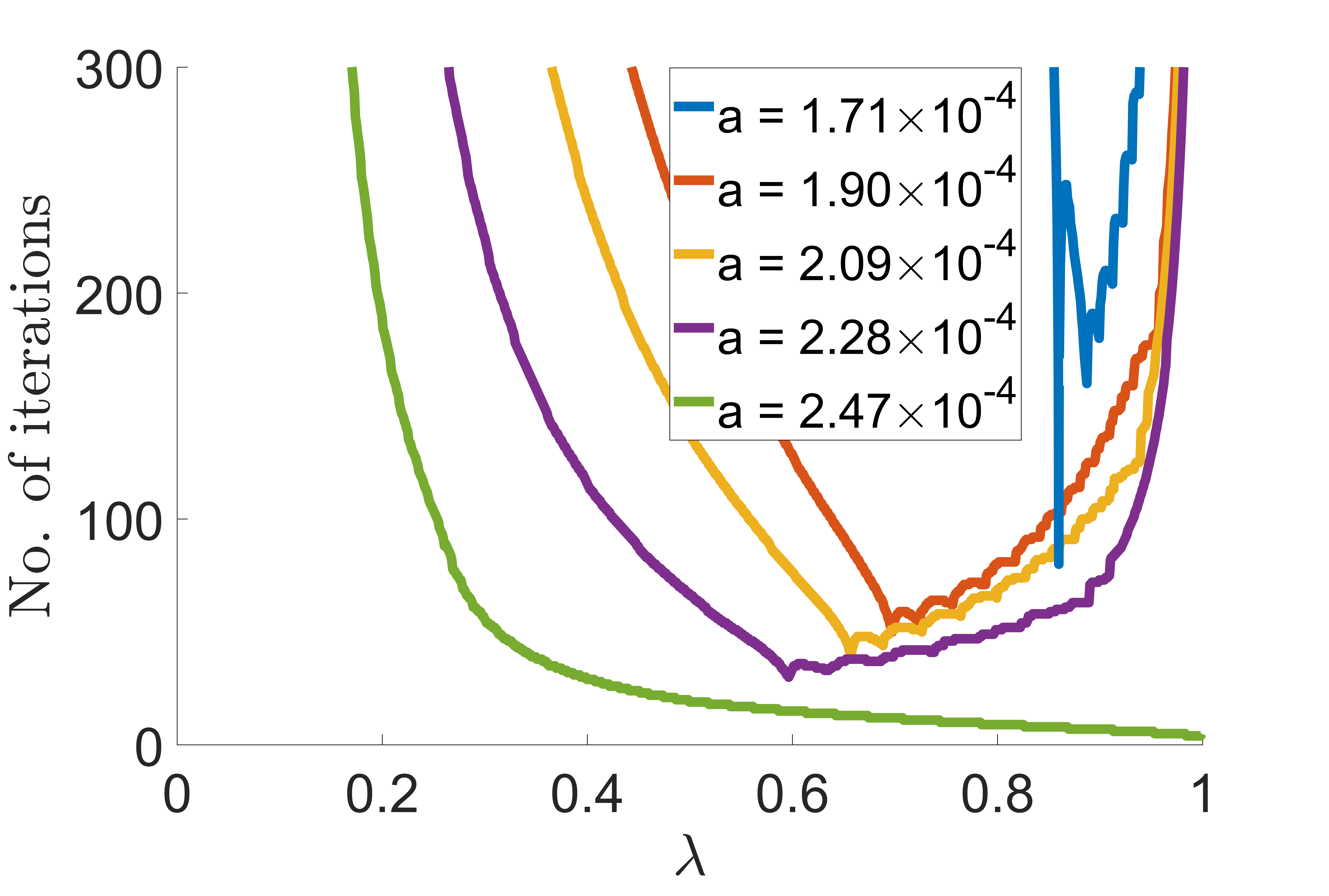} \\
    \caption{\footnotesize $\omega_0=3,\,\zeta=0.5,\,\varepsilon=10^{-9}$}
    \label{fig:w0_3}
\end{subfigure}
\begin{subfigure}{0.5\textwidth}
    \centering
    \includegraphics[width=7cm]{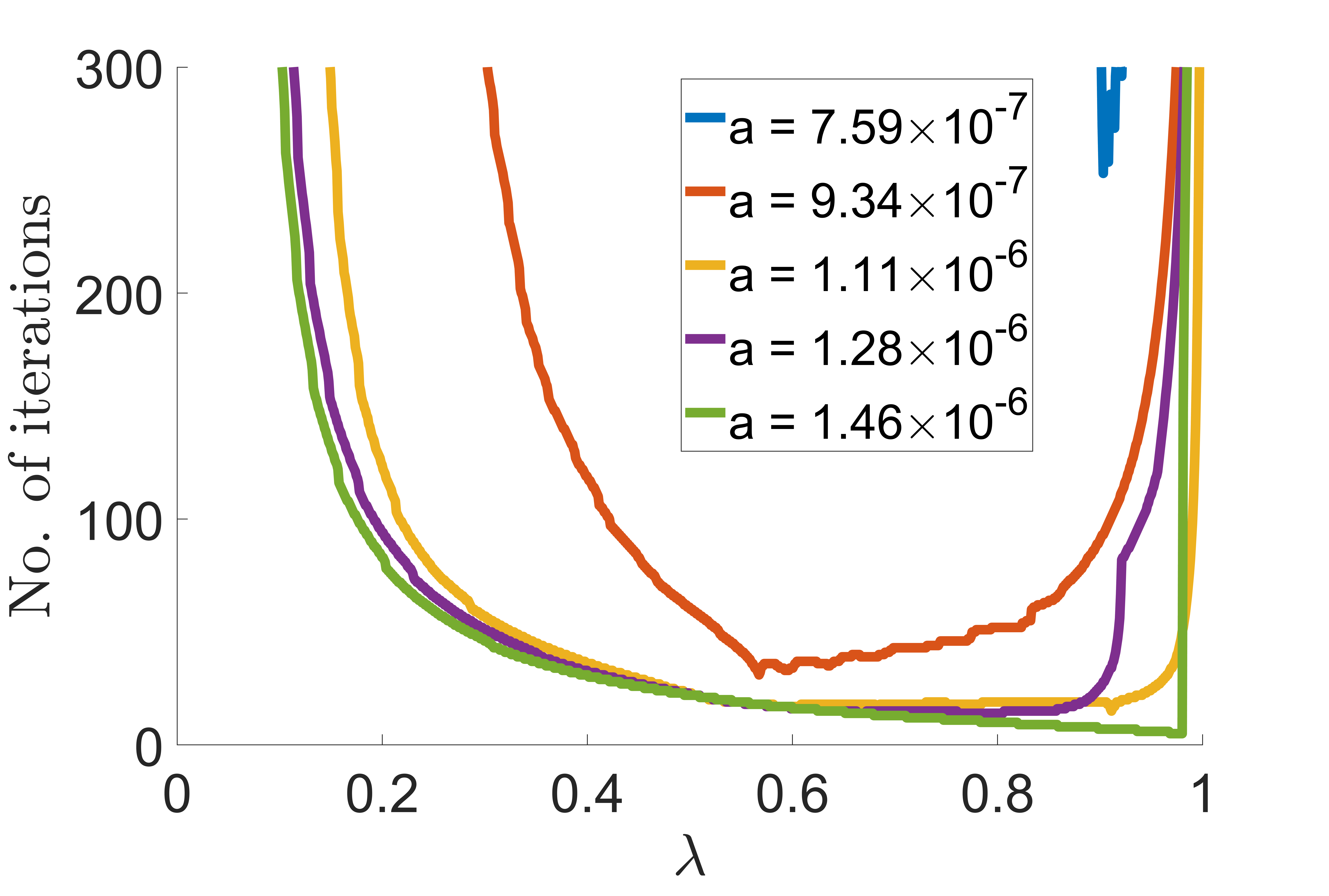} \\
    \caption{\footnotesize $\omega_0=5,\,\zeta=0.5,\,\varepsilon=10^{-12}$}
    \label{fig:w0_5}
\end{subfigure}
\begin{subfigure}{0.5\textwidth}
    \centering
    \includegraphics[width=7cm]{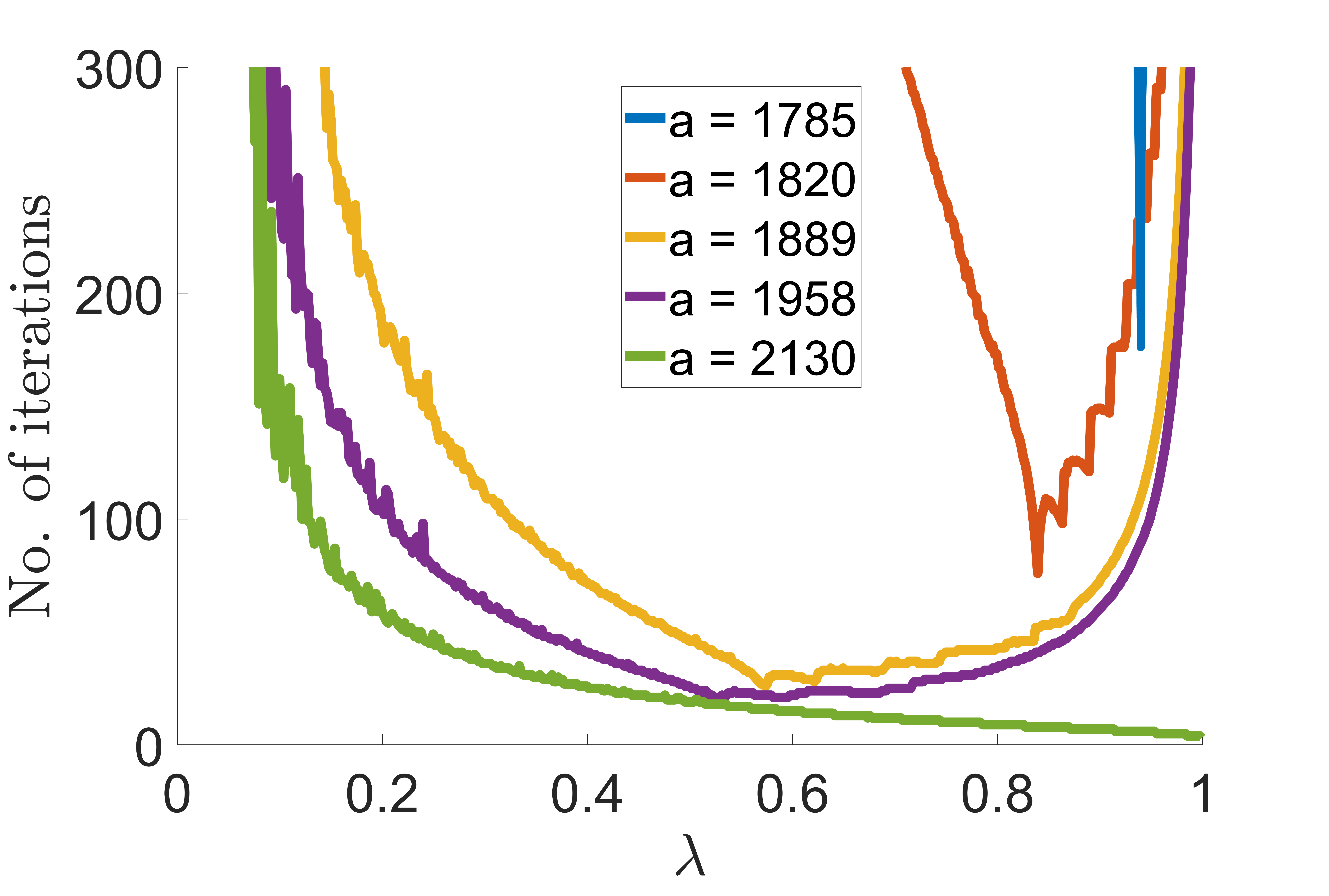} \\
    \caption{\sf\footnotesize Machine tool manipulator with $\varepsilon=10^{-2}$} \label{curve:manip}
\end{subfigure}
\begin{subfigure}{0.5\textwidth}
    \centering
\end{subfigure}
\caption{\sf\small Parameter curves for the harmonic oscillator with various values of $(\omega_0,\zeta)$ and for the machine tool manipulator with tolerance values.}
\label{fig:PHO}
\end{figure}

Figure \ref{fig:PHO} contains the experiments for the harmonic oscillator with various values of the pair $(\omega_0,\zeta)$, as well as the machine tool manipulator. We see from Figure \ref{fig:zeta_zero} that when $\zeta = 0$ varying $\omega_0$ does not seem to have an impact on the best choice for $\lambda$, in fact the curves numerically found over the values $1 \le \omega_0 \le 100$ seem to be identical. For example when $|u(t)|\leq0.259$ the ``best" value for $\lambda$ looks to be approximately $0.7$ for any $1 \le \omega_0 \le 100$.  We observe downward spikes at some parameter values that result in a large decrease in iterations. Although these spikes achieve a much smaller number of iterations we would not necessarily select these values in practice because a slight shift from the parameter values results in a large increase in the number of iterations.

In Figures \ref{fig:w0_1}--\ref{fig:w0_5} we have the parameter graphs for the under-damped harmonic oscillator with $\zeta=0.5$ and $\omega_0=1,3$ and $5$.  This time there are differences in the curves for different values of $\omega_0$. For this damped problem the larger the value of $\omega_0$, the closer the optimal $u$ gets to zero; see Figure~\ref{fig:plots}. Hence the values of $a$ are chosen to be much smaller for $\omega_0=3,5$ so the control constraint remains active though again the largest value of $a$, given by the enveloping lowermost (green) curve, is the case of unconstrained $u$. 

The behaviour of the cases with $\omega_0=1$ and $3$ are very similar with both having some spikes present, the optimal $\lambda$ value for the unconstrained $u$ case being almost 1 and bumps present in the curves. Though for $\omega_0=5$ we see some odd behaviour for the three largest values of~$a$. When using the lowermost (green) curve, i.e., when $u$ in unconstrained, the number of iterations greatly increases at $\lambda\approx0.98$. We also observe for the yellow and purple curves that the number of iterations seems to 
level off before rapidly increasing which is something we don't observe in the other cases. These anomalies could potentially be numerical artifacts but further investigation is needed to draw a conclusion.

For the machine tool manipulator in Figure~\ref{curve:manip} we again see a lot of similarities to the figures from the other problems. Although for the machine tool manipulator we see many small ripples in the curves and for the blue curve there are almost no parameter choices where the methods converge in less than 300 iterations. The blue curves represent the problem where $a$ is so small that there are almost no solutions to the problem.  So it is fair to say that when the problem is almost infeasible it is impossible to get a solution in reasonable time.

In general we observe some similarities across all the problems in Figure \ref{fig:PHO}. For all the problems the ``best" parameter choices are $\lambda\geq0.5$. As we approach the critical value of $a$ where the problems have no solution the ``best'' choice of $\lambda$ approaches 1.

\subsubsection{Error and CPU time comparisons}

The iterates of the DR algorithm (Algorithm~\ref{alg:DR}) are functions and in each iteration function addition and scalar multiplication operations need to be performed.  Obviously we can perform these operations numerically only on approximations of functions. For approximations we consider discretization of the function iterates in that the iterates are represented by $N$ discrete values over a regular partition of their domains.

The AMPL--Ipopt suite is, on the other hand, already a numerical scheme for finite-dimensional optimization problems and as such it is applied to the {\em direct} ({\em Euler}) {\em discretization} (see~e.g.~\cite{Hager2000}) of Problem~(P), with the same $N$ so that the discrete solutions obtained by the DR algorithm and the AMPL--Ipopt suite can be compared.

In Step 5 of Algorithm \ref{alg:DR} we use as stopping criterion the difference between two consecutive iterates in function space. In the implementation of Algorithm \ref{alg:DR}, discretized iterates are used so in turn Step 5 uses finite dimensional norm to evaluate this stopping criterion.

We rather compute {\em a posteriori} the absolute true errors in the solution depending on $N$.  Namely if $u_N$ denotes the approximate (discretized) solution of control and $u_N^*$ the discretized exact/true solution, then the error is the maximum of the absolute difference, in other words, $\|u_N-u_N^*\|_{\ell_\infty}$. The fact that the stopping criterion in Step 5 is effective is shown by the fact that the actual absolute error tends to zero as $N$ grows.

In Table \ref{tbl:erroru} we display these errors as well as the CPU times with the number discretization points $N=10^3,\,10^4$ and $10^5$ for all the previously mentioned problems with the specifications given in Table \ref{tbl:probs}. Since we cannot find analytical solutions for these problems the ``true" solution $u^*$ we are comparing to in this error analysis was computed using the DR algorithm with $N=10^7$ and tolerance $10^{-12}$.

\begin{table}[t!]
    \centering
    \begin{tabular}{ScScScSc}
        $(\omega_0,\zeta)$ & $\varepsilon$ & $a$ & $\lambda$  \\ \hline \hline
        $(1,0)$ & $10^{-6}$ & $0.259$ & $0.75$ \\
        $(5,0)$ & $10^{-6}$ & $0.259$ & $0.75$ \\
        $(1,0.5)$ & $10^{-7}$ & $4.96\times10^{-2}$ & $0.65$\\
        $(5,0.5)$ & $10^{-12}$ & $9.34\times10^{-7}$ & $0.6$ \\
        MTM & $10^{-2}$ & $2000$ & $0.55$
    \end{tabular}
    \caption{Tolerance, bounds on control variable and parameter choices for numerical experiments. MTM stands for machine tool manipulator.}
    \label{tbl:probs}
\end{table}

We are mostly interested in the errors in the control variable since that is the variable being optimized. The states are computed as an auxiliary process using the optimal control found and Euler's method. In Table \ref{tbl:erroru} we see that for the DR algorithm, in general, an increase in the discretization points used by some order results in a decrease in the error by the same order. This is useful to know because if a particular error is required the number of discretization points needed to reach that accuracy can easily be determined.

\begin{table}[t!]
\centering
\small
    \begin{tabular}{ScSclllllll}
 & & \multicolumn{2}{|c|}{$L^\infty$ error in control} & \multicolumn{2}{c|}{$L^\infty$ error in states} & \multicolumn{2}{c}{CPU time [sec]} \\ \cline{3-4}  \cline{5-6}  \cline{7-8}
    $N$ & $(\omega_0,\zeta)$ & \multicolumn{1}{|c}{DR} & \multicolumn{1}{c|}{Ipopt} & \multicolumn{1}{c}{DR} & \multicolumn{1}{c|}{Ipopt} & \multicolumn{1}{c}{DR} & \multicolumn{1}{c}{Ipopt} \\ \hline\hline
    & $(1,0)$ & $4.0\times10^{-3}$ & $4.2\times10^{-2}$ & $1.5\times10^{-2}$ & $1.3\times10^{-2}$ & $4.2\times10^{-3}$ & $2.4\times10^{-1}$ \\
    & $(5,0)$ & $1.8\times10^{-2}$ & \multicolumn{1}{c}{$-$} & $3.5\times10^{-1}$ & \multicolumn{1}{c}{$-$} & $4.5\times10^{-3}$ & \multicolumn{1}{c}{$-$} \\
    $10^3$ & $(1,0.5)$ & $2.1\times10^{-3}$ & $6.8\times10^{-3}$ & $4.3\times10^{-3}$ & $5.1\times10^{-3}$ & $5.5\times10^{-3}$ & $2.3\times10^{-1}$ \\
    & $(5,0.5)$ & $1.2\times10^{-7}$ & $1.5\times10^{-6}$ & $1.5\times10^{-2}$ & $1.5\times10^{-2}$ & $4.3\times10^{-3}$ & $1.9\times10^{-1}$ \\
    & MTM & $9.3\times10^{1}$ & $4.6\times10^{1}$ & $2.8\times10^{1}$ & $2.9\times10^{1}$ & $1.0\times10^{-1}$ & $1.2\times10^{0}$ \\
    \hline
    & $(1,0)$ & $4.0\times10^{-4}$ & $1.4\times10^{-2}$ & $1.5\times10^{-3}$ & $3.1\times10^{-3}$ & $4.9\times10^{-2}$ & $2.1\times10^{0}$ \\
    & $(5,0)$ & $1.8\times10^{-3}$ & $1.2\times10^{-1}$ & $2.6\times10^{-2}$ & $7.5\times10^{-3}$ & $5.4\times10^{-2}$ & $1.3\times10^{-1}$ \\
    $10^4$ & $(1,0.5)$ & $2.1\times10^{-4}$ & $1.0\times10^{-2}$ & $4.3\times10^{-4}$ & $9.3\times10^{-3}$ & $4.7\times10^{-2}$ & $1.9\times10^{0}$ \\
    & $(5,0.5)$ & $1.2\times10^{-8}$ & $8.2\times10^{-7}$ & $1.5\times10^{-3}$ & $1.5\times10^{-3}$ & $4.1\times10^{-2}$ & $1.5\times10^{0}$ \\
    & MTM & $9.2\times10^{0}$ & \multicolumn{1}{c}{$-$} & $2.9\times10^{0}$ & \multicolumn{1}{c}{$-$} & $1.1\times10^{0}$ & \multicolumn{1}{c}{$-$} \\
    \hline
    & $(1,0)$ & $4.0\times10^{-5}$ & $2.5\times10^{-1}$ & $1.5\times10^{-4}$ & $5.8\times10^{-2}$ & $4.2\times10^{-1}$ & $8.5\times10^{1}$ \\
    & $(5,0)$ & $1.7\times10^{-4}$ & $7.6\times10^{-2}$ & $2.5\times10^{-3}$ & $3.1\times10^{-3}$ & $4.8\times10^{-1}$ & $1.5\times10^{0}$ \\
    $10^5$ & $(1,0.5)$ & $2.1\times10^{-5}$ & $1.8\times10^{-2}$ & $4.3\times10^{-5}$ & $1.8\times10^{-2}$ & $4.1\times10^{-1}$ & $1.8\times10^{1}$ \\
    & $(5,0.5)$ & $1.2\times10^{-9}$ & $8.4\times10^{-7}$ & $1.5\times10^{-4}$ & $1.5\times10^{-4}$ & $3.7\times10^{-1}$ & $1.5\times10^{1}$ \\
    & MTM & $5.5\times10^{-1}$ & \multicolumn{1}{c}{$-$} & $2.6\times10^{-1}$ & \multicolumn{1}{c}{$-$} & $9.5\times10^{1}$ & \multicolumn{1}{c}{$-$} \\
    \hline
    \end{tabular}
    \caption{\sf\small Errors in control and states and CPU times for the DR algorithm and AMPL--Ipopt, with specifications from Table \ref{tbl:probs}. For Ipopt we set the tolerance {\tt tol} to $10^{-6}$. A dash means a method was unsuccessful in getting a solution. MTM stands for machine tool manipulator.}
    \label{tbl:erroru}
\end{table}

The only case where this seems to differ is in the machine tool manipulator example. For example with $N=10^3$ the error with the DR algorithm is $9.3\times10^{1}$ so following the observed pattern we would expect that when $N=10^5$ the error should be around $9.3\times10^{-1}$ but instead we have $5.5\times10^{-1}$. A possible explanation may be that the expected error is not far enough from the ``true" solution. Or since the machine tool manipulator is the only example to use the numerical implementation rather than analytical expressions the errors are different to what we expected. Whatever the reason we see that the machine tool manipulator is still achieving at least one order of improvement in error for the control variable.


In Table \ref{tbl:erroru} we also observe that for the states we have the same relationship between the orders of the number of discretization points and error for the DR algorithm.
In the states the errors for the DR algorithm are much closer to that of Ipopt, with Ipopt resulting in better error in some of the cases. These differences in performance of the DR algorithm in the state variables could be because of the extra errors introduced from Euler's method in the computation of the states. If we were to use a more accurate method to compute the states it is possible we would have less discrepancies, though by implementing a more complicated method the run time would increase.

In the fifth and sixth columns of Table~\ref{tbl:erroru} we have the CPU times. In these calculations the CPU times recorded are averages from 1,000 runs on a PC with an i5-10500T 2.30GHz processor and 8GB RAM. On average the DR algorithm is more than 10 times faster, with some cases of the DR algorithm being as much as 200 or more times faster than AMPL--Ipopt. In general for the DR algorithm an increase in the number of discretization points results in a proportional increase in run time. We can use this observation to estimate the CPU time for any number of discretization points.

\section{Conclusion and Open Problems}
\label{sec:con}
We have derived general expressions for the projectors respectively onto the affine set and the box of the minimum-energy control problem. We provided closed-form expressions for the pure, critically-, over- and under-damped harmonic oscillators. For problems where we do not have the necessary information to use the general expression for the projector onto the affine set, we proposed a numerical scheme to compute the projection. In our numerical experiments we have applied this numerical scheme to solve a machine tool manipulator problem. We carried out numerical experiments with all the previously mentioned problems, the closed-form examples and the machine tool manipulator, comparing the errors and CPU times. These numerical experiments compared the performance of the DR algorithm with the AMPL--Ipopt suite.

For the DR algorithm we collected some numerical results regarding the use of different values for the parameter $\lambda$ and its effect on the number of iterations required for the method to converge. In this parameter analysis we observed that as the bounds on the control variable are tightened the choice of parameter becomes more difficult. We also noticed that when the problem is almost infeasible, i.e., the bounds on the control variable are so tight that almost no solutions exist, the parameter value approaches 1.

Regarding our other numerical experiments we observe that an increase in the order of discretization points produces a resulting decrease in the order of the errors, both for the control and state variables. We also see that an increase in the order of discretization points results in an increase in the order of the CPU time. These observations are useful to estimate the run time and errors for any number of discretization points and were not seen in the results from Ipopt. In general we see smaller errors and faster CPU times when using the DR algorithm. Overall, using the DR algorithm with the general expressions and numerical approach we proposed is more advantageous than using Ipopt for the class of problems we consider.


In the future it would be useful to extend this research to more general problems. One such extension is the case when the ODE constraints are nonlinear. If the ODE constraints are nonlinear then we have a nonconvex problem. The DR algorithm has already been shown to have success with finite-dimensional nonconvex problems so it would be interesting to apply this method to nonconvex minimum-energy control problems. An extension to LQ problems where the objective function is given by
\begin{equation*}
\frac{1}{2}\int_{t_0}^{t_f} \Big( x(t)^TQ(t)x(t) + u(t)^TR(t)u(t) \Big)\, dt,
\end{equation*}
where $Q,R$ are positive semi-definite and positive definite matrix functions of dimensions $n\times n$ and $m\times m$, respectively, should also be investigated. The fact that the state variables appear in the objective makes this problem particularly interesting and challenging to study. 

Another possibility is to look into the cause of the intriguing numerical results in Figure~\ref{fig:w0_5} where $\omega_0=5$, $\zeta=0.5$, using the DR algorithm. It is currently unknown whether the behaviour in this case is a result of a theoretical fact or it is just
 a numerical artifact. 

In an earlier preprint version of this paper on arXiv~\cite{BurCalKay2022}, various other projection methods are tested as well as the DR algorithm on the same optimal control problems as in this paper.  These additional methods are namely the method of alternating projections (MAP)~\cite{BauBor1996, vonNeumann1949} and the Dykstra~\cite{BoyleDykstra} and Arag\'on Artacho--Campoy (AAC)~\cite{AAC} algorithms.  MAP consists of sequential, or alternating, projections onto each of the sets $\cal A$ and $\cal B$, and Dykstra is some modification of MAP. However, MAP can only find a point in $\cal A\cap \cal B$. Unlike DR or Dykstra, MAP cannot handle an objective function that is not an indicator. 
Performance comparisons with another projection method, namely the AAC algorithm, a special case of which is the DR algorithm, can also be found in~\cite{BurCalKay2022}.  We chose to focus on a single projection algorithm here, namely the DR algorithm, for which convergence theory involving convex optimization problems in Hilbert spaces have been well studied and cited widely in the literature. In the future it would be interesting to implement other projection algorithms such as the Peaceman--Rachford algorithm or the projected gradient method.

\newpage
\setcounter{section}{0}
\appendix
\renewcommand{\thesection}{Appendix \Alph{section}}
\section{}  \label{sec:proofs}
In this appendix we provide the proofs of the lemmas and corollaries from Section \ref{subsec:proj_2var}.\\[2mm]
\noindent
\textbf{\hypertarget{lem:PDE_proof}{Proof of Lemma \ref{lem:PDE}}}\\[2mm]
With the $A$ given for the double integrator, it is straightforward to compute the expressions in~\eqref{PDI_Phi}.
To find $J_{\varphi}(0)$ we need to solve Equation \eqref{eqn:y} with
\begin{equation*}
\widetilde{A} = \left[\begin{array}{cc;{3pt/3pt}cc}
0 & 1 \ \ & 0 & 0 \\
0 & 0 \ \ & 0 & 0 \\[1mm] \hdashline[3pt/3pt] \\[-4mm]
0 & 0 \ \ & 0 & 1 \\
0 & 0 \ \ & 0 & 0
\end{array}\right] \ \
\widetilde{b} = \left[\begin{array}{c}
0 \\
t \\[1mm] \hdashline[3pt/3pt] \\[-4mm]
0 \\
-1
\end{array}\right].
\end{equation*}
Using Equation \eqref{eqn:y}
\begin{equation*}
y(1) = \left[\begin{array}{c}
y_1(1) \\[1mm] \hdashline[3pt/3pt] \\[-4mm]
y_2(1)
\end{array}\right] =  \bigint_0^1
\left[\begin{array}{cc;{3pt/3pt}cc}
1 & 1-\tau \ & \ 0 & 0 \\
0 & 1 \ & \ 0 & 0 \\[1mm] \hdashline[3pt/3pt] \\[-4mm]
0 & 0 \ & \ 1 & 1-\tau \\
0 & 0 \ & \ 0 & 1
\end{array}\right]
\left[\begin{array}{c}
0 \\
\tau \\[1mm] \hdashline[3pt/3pt] \\[-4mm]
0 \\
-1
\end{array}\right]
\,d\tau
= 
\left[\begin{array}{c}
1/6 \\
1/2 \\[1mm] \hdashline[3pt/3pt] \\[-4mm]
-1/2 \\
-1
\end{array}\right].
\end{equation*}
This gives us
\begin{equation*}
J_{\varphi}(0) = \left[\begin{array}{c;{3pt/3pt}c}
y_1(1) \ & y_2(1)
\end{array}\right] =
\left[\begin{array}{c;{3pt/3pt}c}
1/6 \ \ & -1/2 \\
1/2 \ \ & -1
\end{array}\right]
,
\end{equation*}
when inverted we arrive at the expression in \eqref{PDI_Phi}.
\proofbox

\noindent
\textbf{\hypertarget{cor:PDE_proof}{Proof of Corollary \ref{cor:projA_PDI}}}\\[2mm]
Recall the results for the state transition matrices and the Jacobian in Lemma~\ref{lem:PDE}. By direct substitution of these quantities into~\eqref{eqn:projA_spec}, we get
\begin{align*}
P_{{\cal A}_{0,0}}(u^-)(t) &= u^-(t)+\!\begin{bmatrix}
0 \\ 1
\end{bmatrix}^T\!
\begin{bmatrix}
1 & 0 \\
-t & 1
\end{bmatrix}\!
\begin{bmatrix}
-12 & 6 \\
-6 & 2
\end{bmatrix}\!
\left(\begin{bmatrix}
1 & 1 \\
0 & 1
\end{bmatrix}\!
\begin{bmatrix}
s_0 \\
v_0
\end{bmatrix}\right. \\ & +\left.\int_0^1
\begin{bmatrix}
1 & 1-\tau \\
0 & 1
\end{bmatrix}\!
\begin{bmatrix}
0 \\
u^-(\tau)
\end{bmatrix}d\tau-
\begin{bmatrix}
s_f \\
v_f
\end{bmatrix}\right), \\
&= u^-(t)+\begin{bmatrix}
12t-6 & -6t+2
\end{bmatrix}
\left(\begin{bmatrix}
s_0+v_0-s_f \\ v_0-v_f
\end{bmatrix}+\int_0^1\begin{bmatrix}
(1-\tau)u^-(\tau) \\ u^-(\tau)
\end{bmatrix}d\tau\right), \\
&= u^-(t)+\left(12\left(s_0+v_0-s_f+\int_0^1 (1-\tau)u^-(\tau)d\tau\right)\right.\\& -6\left.\left(v_0-v_f+\int_0^1 u^-(\tau)d\tau\right)\right)t\\&-6\left(s_0+v_0-s_f+\int_0^1 (1-\tau)u^-(\tau)d\tau\right)+2\left(v_0-v_f+\int_0^1 u^-(\tau)d\tau\right),
\end{align*}
as required.
\proofbox

\noindent
\textbf{\hypertarget{lem:PHO1_proof}{Proof of Lemma \ref{lem:PHO}}}\\[2mm]
Given $A$ for the harmonic oscillator, it is straightforward to compute the state transition matrices in \eqref{eqn:PHO_Phi}. To find $J_\varphi(0)$ we need to solve Equation \eqref{eqn:y} where
\begin{equation*}
\widetilde{A} = \left[\begin{array}{cc;{3pt/3pt}cc}
0 & 1 \ \ & 0 & 0 \\
-\omega_0^2 & 0 \ \ & 0 & 0 \\[1mm] \hdashline[3pt/3pt] \\[-4mm]
0 & 0 \ \ & 0 & 1 \\
0 & 0 \ \ & -\omega_0^2 & 0
\end{array}\right] \ \
\widetilde{b} = \left[\begin{array}{c}
0 \\
{\sin(\omega_0t)}/{\omega_0} \\[1mm] \hdashline[3pt/3pt] \\[-4mm]
0 \\
-\cos(\omega_0t)
\end{array}\right].
\end{equation*}
Using Equation \eqref{eqn:y},
\begin{equation*}
y(2\pi) = \left[\begin{array}{c}
y_1(2\pi) \\[1mm] \hdashline[3pt/3pt] \\[-4mm]
y_2(2\pi)
\end{array}\right] = \bigint_0^{2\pi}
\left[\begin{array}{c;{3pt/3pt}c}
e^{A(2\pi-\tau)} & \mathbf{0}_{2\times2} \\[1mm] \hdashline[3pt/3pt] \\[-4mm]
\mathbf{0}_{2\times2} & e^{A(2\pi-\tau)}
\end{array}\right]
\left[\begin{array}{c}
0 \\
\sin(\omega_0\tau)/\omega_0 \\[1mm] \hdashline[3pt/3pt] \\[-4mm]
0 \\
-\cos(\omega_0\tau)
\end{array}\right]
\,d\tau
= \left[\begin{array}{c}
-\pi/\omega_0^2 \\
0 \\[1mm] \hdashline[3pt/3pt]\\[-4mm]
0 \\
-\pi
\end{array}\right]
.
\end{equation*}
As in Lemma \ref{lem:PDE} this gives us
\begin{equation*}
J_\varphi(0)  = \left[\begin{array}{c;{3pt/3pt}c}
y_1(2\pi)\ \ & \ y_2(2\pi)
\end{array}\right] =
\left[\begin{array}{c;{3pt/3pt}c}
    -\pi/\omega_0^2\ \ & 0 \\[1mm]
    0\ \ & \ -\pi
\end{array}\right]
\end{equation*}
and the expression for the inverse Jacobian in \eqref{eqn:PHO_Phi} follows.
\proofbox

\noindent
\textbf{\hypertarget{cor:PHO1_proof}{Proof of Corollary \ref{cor:projA_PHO}}}\\[2mm]
Recall the results from Lemma~\ref{lem:PHO}.  By direct substitution of the state transition matrix and Jacobian into~\eqref{eqn:projA_spec}, we get
\begin{align*}
P_{{\cal{A}}_{0,0}}(u^-)(t) &= u^-(t)+\begin{bmatrix}
0 & 1
\end{bmatrix}
\begin{bmatrix}
\cos(\omega_0t) & \omega_0\sin(\omega_0t) \\ -\ds\frac{\sin(\omega_0t)}{\omega_0} & \cos(\omega_0t)
\end{bmatrix}
\begin{bmatrix}
-\frac{\omega_0^2}{\pi} & 0 \\ 0 & -\ds\frac{1}{\pi}
\end{bmatrix}
\\& \left(\begin{bmatrix}
\cos(2\pi\omega_0) & \ds\frac{\sin(2\pi\omega_0)}{\omega_0} \\ -\omega_0\sin(2\pi\omega_0) & \cos(2\pi\omega_0)
\end{bmatrix}\begin{bmatrix}
s_0 \\ v_0
\end{bmatrix}\right. \\& +\left.\int_0^{2\pi}\begin{bmatrix}
\cos(\omega_0(2\pi-\tau)) & \ds\frac{\sin(\omega_0(2\pi-\tau))}{\omega_0} \\ -\omega_0\sin(\omega_0(2\pi-\tau)) & \cos(\omega_0(2\pi-\tau))
\end{bmatrix}\begin{bmatrix}
0 \\ 1
\end{bmatrix}u^-(\tau)\,d\tau-\begin{bmatrix}
s_f \\ v_f
\end{bmatrix}\right), \\
&= u^-(t)+\begin{bmatrix}
\ds\frac{\omega_0\sin(\omega_0t)}{\pi} \\ \ds-\frac{\cos(\omega_0t)}{\pi}\end{bmatrix}^T\left(\begin{bmatrix}
s_0-s_f \\ v_0-v_f
\end{bmatrix}\int_0^{2\pi}\begin{bmatrix}
-u^-(\tau)\ds\frac{\sin(\omega_0\tau)}{\omega_0} \\ u^-(\tau)\cos(\omega_0\tau)
\end{bmatrix}d\tau\right), \\
&= u^-(t)+\frac{\omega_0}{\pi}\bigg(s_0-s_f-\frac{1}{\omega_0}\int_0^{2\pi}\sin(\omega_0\tau)u^-(\tau)\,d\tau\bigg)\sin(\omega_0t) \\& -\frac{1}{\pi}\bigg(v_0-v_f+\int_0^{2\pi}\cos(\omega_0\tau)u^-(\tau)\,d\tau\bigg)\cos(\omega_0t),
\end{align*}
as stated.
\proofbox

\noindent
\textbf{\hypertarget{lem:PDHO1_proof}{Proof of Lemma \ref{lem:PDHO_crit}}}\\[2mm]
Given $A$ for the damped harmonic oscillator with $\zeta=1$, it is straightforward to compute the state transition matrix in \eqref{eqn:PDHO1_Phi}.
To find $J_\varphi(0)$ we need to solve Equation \eqref{eqn:y} with
\begin{equation*}
\widetilde{A} = \left[\begin{array}{cc;{3pt/3pt}cc}
0 & 1 \ \ & 0 & 0 \\
-\omega_0^2 & -2\omega_0 \ \ & 0 & 0 \\[1mm] \hdashline[3pt/3pt] \\[-4mm]
0 & 0 \ \ & 0 & 1 \\
0 & 0 \ \ & -\omega_0^2 & -2\omega_0
\end{array}\right], \ \
\widetilde{b} = \left[\begin{array}{c}
0 \\
te^{\omega_0t} \\[1mm] \hdashline[3pt/3pt] \\[-4mm]
0 \\
-e^{\omega_0t}(\omega_0t+1)
\end{array}\right].
\end{equation*}
Using Equation \eqref{eqn:y}
\begin{align*}
y(2\pi) &= \left[\begin{array}{c}
y_1(2\pi) \\[1mm] \hdashline[3pt/3pt] \\[-4mm]
y_2(2\pi)
\end{array}\right] = \bigint_0^{2\pi}
\left[\begin{array}{c;{3pt/3pt}c}
e^{A(2\pi-\tau)} & \mathbf{0}_{2\times2} \\[1mm] \hdashline[3pt/3pt] \\[-4mm]
\mathbf{0}_{2\times2} & e^{A(2\pi-\tau)}
\end{array}\right]
\left[\begin{array}{c}
0 \\
\tau e^{\omega_0\tau} \\[1mm] \hdashline[3pt/3pt] \\[-4mm]
0 \\
-e^{\omega_0\tau}(\omega_0\tau+1)
\end{array}\right]
\,d\tau 
\end{align*}
yields, after integration, the required expression in~\eqref{eqn:y_PDHO1}. So in the same way as in the proofs of Lemmas~\ref{lem:PDE}--\ref{lem:PHO}
\begin{align*}
J_\varphi(0) &= \left[\begin{array}{c;{3pt/3pt}c}
y_1(2\pi)\ \ & \ y_2(2\pi)
\end{array}\right],
\end{align*}
and once inverted the expression in \eqref{eqn:Jphi_PDHO} follows.
\proofbox

\noindent
\textbf{\hypertarget{cor:PDHO1_proof}{Proof of Corollary \ref{cor:projA_PDHO1}}}\\[2mm]
We begin by finding $x(2\pi)$.
\begin{align*}
\begin{bmatrix}
x_1(2\pi) \\ x_2(2\pi)
\end{bmatrix} &= e^{-2\pi\omega_0}\begin{bmatrix}
2\pi\omega_0+1 & 2\pi \\ -2\pi\omega_0^2 & -2\pi\omega_0+1
\end{bmatrix}\begin{bmatrix}
s_0 \\ v_0
\end{bmatrix}\\ 
&\ \ \ +\int_0^{2\pi} e^{-(2\pi-\tau)\omega_0}\begin{bmatrix}
(2\pi-\tau)\omega_0+1 & (2\pi-\tau) \\ -(2\pi-\tau)\omega_0^2 & -(2\pi-\tau)\omega_0+1
\end{bmatrix}\begin{bmatrix}
0 \\ 1
\end{bmatrix}u^-(\tau)\,d\tau, \\
&= e^{-2\pi\omega_0}\begin{bmatrix}
s_0(2\pi\omega_0+1)+2\pi v_0+\int_0^{2\pi} u^-(\tau)(2\pi-\tau)e^{\omega_0\tau}\, d\tau \\
-2\pi s_0\omega_0^2 -v_0(2\pi\omega_0-1)-\int_0^{2\pi} u^-(\tau)((2\pi-\tau)\omega_0-1)e^{\omega_0\tau}\, d\tau
\end{bmatrix}.
\end{align*}

Recall the results from Lemma~\ref{lem:PDHO_crit}. By direct substitution of the state transition matrix and Jacobian into~\eqref{eqn:projA_spec}, we get
{\small\begin{align*}
P_{{\cal{A}}_{\omega_0,1}}(u^-)(t) &= u^-(t)+\begin{bmatrix}
0 & 1
\end{bmatrix}e^{\omega_0t}\!\begin{bmatrix}
-\omega_0t+1 & t\omega_0^2 \\ -t & \omega_0t+1
\end{bmatrix}\!\frac{1}{y_{11}(2\pi)y_{22}(2\pi)-y_{12}(2\pi)y_{21}(2\pi)} \\ &\times \begin{bmatrix}
y_{22}(2\pi) & -y_{21}(2\pi) \\ -y_{12}(2\pi) & y_{11}(2\pi)
\end{bmatrix}\left(\begin{bmatrix}
x_1(2\pi) \\ x_2(2\pi)
\end{bmatrix}-\begin{bmatrix}
s_f \\ v_f
\end{bmatrix}
\right).
\end{align*}}
After carrying out some of the matrix multiplications on the right-hand side one gets
{\small\begin{align*}
P_{{\cal{A}}_{\omega_0,1}}(u^-)(t) & = u^-(t)+\frac{e^{\omega_0(t-2\pi)}}{y_{11}y_{22}-y_{12}y_{21}}\begin{bmatrix}
-y_{22}t - y_{12}(\omega_0 t+1) \\ -y_{21}t + y_{11}(\omega_0 t+1)
\end{bmatrix}^T \begin{bmatrix}
x_1(2\pi) - s_f \\ x_2(2\pi) - v_f
\end{bmatrix},
\end{align*}}
where we have omitted the arguments for $y$ to save space. Expansions and further manipulations yield the required expression.
\proofbox

\noindent
\textbf{\hypertarget{lem:PDHO2_proof}{Proof of Lemma~\ref{lem:PDHO_2}}}\\[2mm]
Given $A$ for the damped harmonic oscillator with $\zeta>1$, it is straightforward to compute the state transition matrices in \eqref{eqn:PDHO2_Phi}. To find $J_\varphi(0)$ we need to solve Equation \eqref{eqn:y} where
\begin{equation*}
\widetilde{A} = \left[\begin{array}{cc;{3pt/3pt}cc}
0 & 1 \ \ & 0 & 0 \\
-\omega_0^2 & -2\omega_0\zeta \ \ & 0 & 0 \\[1mm] \hdashline[3pt/3pt] \\[-4mm]
0 & 0 \ \ & 0 & 1 \\
0 & 0 \ \ & -\omega_0^2 & -2\omega_0\zeta
\end{array}\right] \ \
\widetilde{b} = \frac{e^{\alpha t}}{\beta}\left[\begin{array}{c}
0 \\
\sinh(\beta t) \\[1mm] \hdashline[3pt/3pt] \\[-4mm]
0 \\
-\omega_0\sinh(\beta t+\eta)
\end{array}\right].
\end{equation*}
Using Equation \eqref{eqn:y}
\begin{align*}
y(2\pi) &= \frac{e^{\alpha\tau}}{\beta}\bigint_0^{2\pi} 
\left[\begin{array}{c;{3pt/3pt}c}
e^{A(2\pi-\tau)} & \mathbf{0}_{2\times2} \\[1mm] \hdashline[3pt/3pt] \\[-4mm]
\mathbf{0}_{2\times2} & e^{A(2\pi-\tau)}
\end{array}\right]
\left[\begin{array}{c}
0 \\
e^{\alpha \tau}\sinh(\beta \tau) \\[1mm] \hdashline[3pt/3pt] \\[-4mm]
0 \\
-\omega_0\sinh(\beta\tau+\eta)
\end{array}\right]
\,d\tau
\end{align*}
which, after integration, yields \eqref{eqn:y_PDHO2}. As in Lemmas \ref{lem:PDE}--\ref{lem:PDHO_crit}
\[J_\varphi(0) = \left[\begin{array}{c;{3pt/3pt}c}
y_1(2\pi)\ \ & \ y_2(2\pi)
\end{array}\right],\]
inversion of which results in the expression in \eqref{eqn:Jphi_PDHO}.
\proofbox

\newpage
\noindent
\textbf{\hypertarget{cor:PDHO2_proof}{Proof of Corollary \ref{cor:projA_PDHO2}}}\\[2mm]
We begin by finding $x(t_f)$.
\begin{align*}
x(2\pi) &= \dfrac{e^{-2\pi\alpha}}{\beta}\begin{bmatrix}
\omega_0\sinh(2\pi\beta+\eta) & \sinh(2\pi\beta) \\ -\omega_0^2\sinh(2\pi\beta) & \omega_0\sinh(-2\pi\beta+\eta)
\end{bmatrix}\begin{bmatrix}
s_0 \\ v_0
\end{bmatrix} \\ & +\bigint_0^{2\pi}\dfrac{e^{-\alpha(2\pi-\tau)}}{\beta}\begin{bmatrix}
u^-(\tau)\sinh(\beta(2\pi-\tau)) \\ u^-(\tau)\omega_0\sinh(-\beta(2\pi-\tau)+\eta)
\end{bmatrix}\,d\tau, \\
&= \dfrac{e^{-\alpha(2\pi-\tau)}}{\beta}\begin{bmatrix}
s_0\omega_0\sinh(2\pi\beta+\eta)+v_0\sinh(2\pi\beta) + C \\
-s_0\omega_0\sinh(2\pi\beta)+v_0\omega_0\sinh(-2\pi\beta+\eta)+D
\end{bmatrix},
\end{align*}
where
\begin{align*}
C &:= \int_0^{2\pi}u^-(\tau)e^{\alpha\tau}\sinh(\beta(2\pi-\tau))\,d\tau , \\
D &:= \int_0^{2\pi}e^{\alpha\tau}u^-(\tau)\omega_0\sinh(-\beta(2\pi-\tau)+\eta)\,d\tau.
\end{align*}
Now, by direct substitution of the state transition matrices and Jacobian from Lemma~\ref{lem:PDHO_2} into Equation \eqref{eqn:projA_spec}
\begin{align*}
P_{{\cal A}_{\omega_0,\zeta}}(u^-)(t) &= u^-(t)+\dfrac{e^{\alpha (t-2\pi)}}{\beta^2(y_{11}y_{22}-y_{12}y_{21})}\\&\times\begin{bmatrix}
-y_{22}\sinh(\beta t)-y_{12}\omega_0\sinh(\beta t+\eta) \\ y_{21}\sinh(\beta t)+y_{11}\omega_0\sinh(\beta t+\eta)
\end{bmatrix}^T\begin{bmatrix}
x_1-\dfrac{\beta s_f}{e^{-2\pi\alpha}} \\ x_2-\dfrac{\beta v_f}{e^{-2\pi\alpha}}
\end{bmatrix}, \\
&=u^-(t)+ \dfrac{e^{\alpha (t-2\pi)}}{\beta^2(y_{11}y_{22}-y_{12}y_{21})}\bigg(\!-(y_{22}\sinh(\beta t)+y_{12}\omega_0\sinh(\beta t+\eta))\\ &\!\times\!\left(\!x_1-\dfrac{\beta s_f}{e^{-2\pi\alpha}}\!\right)\!+\!(y_{21}\sinh(\beta t)+y_{11}\omega_0\sinh(\beta t+\eta))\left(x_2-\dfrac{\beta v_f}{e^{-2\pi\alpha}}\right)\!\!\bigg).
\end{align*}
where $x_i$, $y_{ij}$, $i,j = 1,2$, are all evaluated at $2\pi$, but not shown for clarity.
\proofbox

\noindent
\textbf{\hypertarget{lem:PDHO3_proof}{Proof of Lemma \ref{lem:PDHO_3}}}\\[2mm]
Given $A$ for the damped harmonic oscillator with $0<\zeta<1$ we compute the state transition matrices in \eqref{eqn:PDHO3_Phi}. To find $J_\varphi(0)$ we must solve Equation \eqref{eqn:y} where
\begin{equation*}
\widetilde{A} = \left[\begin{array}{cc;{3pt/3pt}cc}
0 & 1 \ \ & 0 & 0 \\
-\omega_0^2 & -2\omega_0\zeta \ \ & 0 & 0 \\[1mm] \hdashline[3pt/3pt] \\[-4mm]
0 & 0 \ \ & 0 & 1 \\
0 & 0 \ \ & -\omega_0^2 & -2\omega_0\zeta
\end{array}\right] \ \
\widetilde{b} = \frac{e^{\alpha t}}{\tb}\left[\begin{array}{c}
0 \\ \sin(\tb t) \\[1mm] \hdashline[3pt/3pt] \\[-4mm]
0 \\ -\omega_0\cos(\tb t+\gamma)
\end{array}\right].
\end{equation*}
Then using Equation \eqref{eqn:y}
\begin{align*}
y(2\pi) &= \bigint_0^{2\pi}\frac{e^{\alpha\tau}}{\tb}\left[\begin{array}{c;{3pt/3pt}c}
e^{A(2\pi-\tau)} & \mathbf{0}_{2\times2} \\[1mm] \hdashline[3pt/3pt] \\[-4mm]
\mathbf{0}_{2\times2} & e^{A(2\pi-\tau)}
\end{array}\right]\left[\begin{array}{c}
0 \\ \sin(\tb \tau) \\[1mm] \hdashline[3pt/3pt] \\[-4mm]
0 \\ -\omega_0\cos(\tb t+\gamma)
\end{array}\right]\,d\tau\,.
\end{align*}
After integration, we have expression \eqref{eqn:y_PDHO3}. Recall (as in Lemmas \ref{lem:PDE}--\ref{lem:PDHO_2})
\[J_\varphi(0) = \left[\begin{array}{c;{3pt/3pt}c}
y_1(2\pi)\ \ & \ y_2(2\pi)
\end{array}\right],\]
which after inverting yields the expression in \eqref{eqn:Jphi_PDHO}.
\proofbox

\noindent
\textbf{\hypertarget{cor:PDHO3_proof}{Proof of Corollary \ref{cor:projA_PDHO3}}}\\[2mm]
We begin by computing $x(2\pi)$.
\begin{align*}
x(2\pi) &= \dfrac{e^{-2\pi\alpha}}{\tb}\begin{bmatrix}
\omega_0\cos(2\pi\tb+\gamma) & \sin(2\pi\tb) \\ -\omega_0^2\sin(2\pi\tb) & \omega_0\cos(2\pi\tb-\gamma)
\end{bmatrix}\begin{bmatrix}
s_0 \\ v_0
\end{bmatrix}+\bigint_0^{2\pi}\dfrac{e^{-(2\pi-\tau)\alpha}}{\tb}\\&\times\begin{bmatrix}
\omega_0\cos((2\pi-\tau)\tb+\gamma) & \sin((2\pi-\tau)\tb) \\ -\omega_0^2\sin((2\pi-\tau)\tb) & \omega_0\cos((2\pi-\tau)\tb-\gamma)
\end{bmatrix}\begin{bmatrix}
0 \\ u^-(\tau)
\end{bmatrix}\,d\tau \\
& = \dfrac{e^{-2\pi\alpha}}{\tb}\begin{bmatrix}
s_0\omega_0\cos(2\pi\tb+\gamma)+v_0\sin(2\pi\tb)+C \\
-s_0\omega_0^2\sin(2\pi\tb)+v_0\omega_0\cos(2\pi\tb-\gamma)+D
\end{bmatrix}
\end{align*}
where
\begin{align*}
C&:=\int_0^{2\pi}e^{\alpha\tau}\sin(\tb(2\pi-\tau))u^-(\tau)\,d\tau,\\ D&:=\int_0^{2\pi}e^{\alpha\tau}\omega_0\cos((2\pi-\tau)\tb+\gamma)u^-(\tau)\,d\tau.
\end{align*}
In the following we omit the arguments of $x$ and $y$ to save space. After direct substitution of the results from Lemma \ref{lem:PDHO_3} into Equation \eqref{eqn:projA_spec} and simple matrix multiplication one finds
\begin{align*}
P_{{\cal A}_{\omega_0,\zeta}}(u^-)&(t) = u^-(t)+\dfrac{e^{\alpha t}}{\tb^2(y_{11}y_{22}-y_{12}y_{21})}\\&\times\begin{bmatrix}
-y_{22}\sin(\tb t)-y_{12}\omega_0\cos(\tb t+\gamma) \\ y_{21}\sin(\tb t)+y_{11}\omega_0\cos(\tb t+\gamma)
\end{bmatrix}^T\begin{bmatrix}
x_1e^{2\pi\alpha}-\tb s_f \\ x_2e^{2\pi\alpha}-\tb v_f
\end{bmatrix},\\
&= u^-(t)+\dfrac{e^{\alpha(t-2\pi)}}{\tb^2(y_{11}y_{22}-y_{12}y_{21})}\Bigg(\!\!-\!\left(y_{22}\sin(\tb t)+y_{12}\omega_0\cos(\tb t+\gamma)\right)\\&\!\times\!\left(\!x_1-\frac{\tb s_f}{e^{-2\pi\alpha}}\!\right) \!+\! \left(y_{21}\sin(\tb t)+y_{11}\omega_0\cos(\tb t+\gamma)\right)\!\!\left(x_2-\frac{\tb v_f}{e^{-2\pi\alpha}}\right)\!\!\Bigg).
\end{align*}
\proofbox

\ \\
\noindent
{\bf\Large Data Availability} \\[2mm]
The full resolution Matlab graph/plot files that support the findings of this study are available from the corresponding author upon request.

\ \\
\noindent
{\bf\Large Conflict of Interest} \\[2mm]
The authors have no competing, or conflict of, interests to declare that are relevant to the content of this article.

\section*{Acknowledgments}
The authors offer their warm thanks to William Hager who made useful comments on an earlier preprint version of their paper in~\cite{BurCalKay2022}.  They are grateful to Walaa Moursi for pointing to a specific result in~\cite{BauCombettes} about convergence of the Douglas--Rachford algorithm. BIC was supported by an Australian Government Research Training Program Scholarship. No funding was received by RSB and CYK to assist with the preparation of this manuscript.

\end{document}